\DeclareMathOperator{\runs}{\mathrm{runs}}
\DeclareMathOperator{\cycleruns}{\mathrm{cruns}}
\DeclareMathOperator{\cycles}{\mathrm{c}}
\newcommand{\eulerbr}[2]{ \genfrac{\langle}{\rangle}{0pt}{}{#1}{#2}}
\newlength{\standardunit}
\numberwithin{equation}{section}
\theoremstyle{plain}
\newtheorem{thm}{Theorem}[section]
\newtheorem{prop}[thm]{Proposition}
\newtheorem{cor}[thm]{Corollary}
\newtheorem{lem}[thm]{Lemma}
\theoremstyle{definition}
\newtheorem{exa}[thm]{Example}
\newtheorem{rem}[thm]{Remark}
\newtheorem{defi}[thm]{Definition}
\newcommand{\IC}{\mathbf{C}}
\newcommand{\IZ}{\mathbf{Z}}
\newcommand{\real}{\mathbb{R}}
\newcommand{\comp}{\mathbb{C}}
\newcommand{\SP}{\mathcal{P}}
\newcommand{\irr}[1]{#1_{\mathrm{irr}}}
\newcommand{\conn}[1]{#1_{\mathrm{conn}}}
\newcommand{\SG}{\mathfrak{S}}
\newcommand{\Cycles}{\mathcal{C}}
\newcommand{\NCP}{\mathrm{NC}}
\newcommand{\NC}{\NCP}
\newcommand{\MP}{\mathcal{M}}
\newcommand{\IP}{\mathcal{I}}
\newcommand{\I}{\IP}
\newcommand{\nestforest}{\tau}
\newcommand{\abs}[1]{\left\lvert #1 \right\rvert}  
\newcommand{\cseries}{\comp \llbracket z_1,\ldots,z_r \rrbracket}
\title{Relations between cumulants in noncommutative probability}
\author{Octavio Arizmendi}
\address{Centro de Investigaci\'on en Matem\'aticas\\ Jalisco S/N, Col. Valenciana., 36240 Guanajuato, Mexico}
\email{octavius@cimat.mx}
\author{Takahiro Hasebe}
\address{Department of Mathematics, Hokkaido University \\ Kita 10, Nishi 8, Kita-ku, Sapporo 060-0810, Japan}
\email{thasebe@math.sci.hokudai.ac.jp}
\author{Franz Lehner}
\address{Institute for  Mathematical  Structure Theory\\
Graz Technical University\\
Steyrergasse 30, 8010 Graz, Austria}
\email{lehner@math.tugraz.at}
\author{Carlos Vargas}
\address{Fachrichtung Mathematik\\
Universit\"at des Saarlandes\\ Postfach 151150, 66041 Saarbr\"ucken, Germany.
}
\email{carlos@math.uni-sb.de}
\thanks{T.~Hasebe partially supported by Austrian Science Foundation
  (FWF) project P25510-N26 and by Marie Curie Actions -- International Incoming Fellowships Project 328112 ICNCP.\\ 
 C.~Vargas supported by DFG-Deutsche Forschungsgemeinschaft, Project SP419/8-1.            
              }
\date{\today{}}
\begin{document}

\begin{abstract} 
  We express classical, free, Boolean and monotone cumulants in terms of each
  other, using combinatorics of heaps, pyramids, Tutte
  polynomials and permutations.
  We completely determine the coefficients of these formulas with the exception
  of the formula for classical cumulants in terms of monotone cumulants whose
  coefficients are only partially computed.  
\end{abstract}

\keywords{moments, cumulants, noncommutative probability, set partitions, Tutte
  polynomials, permutation statistics, heaps}
\subjclass{05A18, 46L53}
\maketitle

\section{Introduction}
Cumulants provide a combinatorial description of independence of random
variables. While Fourier analysis is the tool of choice for most problems in
classical probability, cumulants are an indispensable ingredient for many
investigations in noncommutative probability.
An intriguing aspect of noncommutative probability is the existence of several
kinds of independence \cite{V85,SW97,M01,L04} with corresponding
cumulants introduced in \cite{V85,S94,SW97,HS11a,L04} sharing many common features. 
In a certain sense (which can be made precise,
see \cite{Speicher:1995:universal,Muraki:2002:universal})
these are the only ``natural'' notions of independence
and the combinatorics of cumulants in particular show very close analogies
between the different theories. Roughly speaking, one can pass from classical to
free/boolean/monotone independence by replacing the lattice of all set
partitions by noncrossing/interval/monotone partitions respectively.

On the other hand, the generating functions of cumulants correspond
to various transforms of probability measures and one major application is the calculation of
walk generating functions (or Green's functions) of certain graph products,
see \cite{Woess:2000:random} for details of the following concepts.
The cartesian product of graphs corresponds to classical convolution
as observed by Polya \cite{Polya:1921:Aufgabe},
the free product of graphs corresponds to Voiculescu's free convolution
\cite{Voiculescu:1986:addition,Woess:L1986:nearest,CartwrightSoardi:1986:random}
and the star product of graphs \cite[Section~9.7]{Woess:2000:random} 
corresponds to Boolean convolution 
\cite {Obata:2004:quantum}.
The comb product entered the graph theory literature rather recently
\cite{KrishnapurPeres:2004:recurrent} (a special case
having been considered earlier in physics, see \cite{WeissHavlin:1986:comb})
in order to construct an example of a recurrent random walk with so-called
\emph{finite collision property}.
It was observed
in \cite{AccardiBenGhorbalObata:2004:monotone} that this graph product corresponds to
monotone convolution.

The starting point of this paper is
the following relations between (univariate)
classical ($(\kappa_n)_{n\geq1}$), free ($(r_n)_{n\geq 1}$) and Boolean
($(b_n)_{n \geq 1}$) cumulants, shown by the third author \cite {L02} some time ago:
\begin{align}
b_n&=\sum_{\pi\in \irr\NC(n)}r_\pi, \label{eq:free2boolean}\\
r_n&=\sum_{\pi\in\conn\SP(n)}\kappa_\pi, \label{eq:class2free}\\
b_n&=\sum_{\pi\in\irr\SP(n)}\kappa_\pi.  \label{eq:class2Boolean}
\end{align}
where $\irr\NC(n)$, $\conn\SP(n)$, $\irr\SP(n)$ are, respectively, the
sets of irreducible noncrossing partitions, connected partitions and
irreducible partitions\footnote{Partitions and notations are defined in Section
  \ref{sec:partition}.}. Relation \eqref{eq:class2free} was used in
\cite{BelinschiBozejkoLehnerSpeicher:2011:normal} to attack the problem
of free infinite divisibility of the normal law.

We denote by $K_n, H_n, B_n, R_n$ the multivariate classical, monotone, Boolean
and free cumulants respectively. The univariate cumulants $\kappa_n, h_n,
b_n,r_n$ are obtained by evaluating the multivariate cumulants at $n$ copies
of a single variable.

Relation (\ref{eq:free2boolean}) was extended by Belinschi and Nica in \cite{BN08} to the case of multivariate cumulants $B_n,R_n$. In addition, they obtained the inverse formula:
\begin{equation}\label{eq:boolean2free:BeNi1}
R_n=\sum_{\pi\in \irr\NC(n)}(-1)^{\abs{\pi}-1}B_\pi.
\end{equation}
It is interesting to notice the similarity to formula (4) in \cite{LauveMastnak:2011:primitives}. 
We will give a different proof of \eqref{eq:boolean2free:BeNi1} in Section \ref{sec:proof1} which clarifies this coincidence. 

The extensions of (\ref{eq:class2free}) and (\ref{eq:class2Boolean}) to the
multivariate case can be shown by using the same proofs as in \cite {L02} for the
univariate case, see below for details. An interesting inverse formula for
(\ref{eq:class2Boolean}) was proved recently by M.~Josuat-Verg\`es 
\cite{JV13}, expressing classical cumulants in terms of free cumulants:
\begin{equation}\label{eq:free2class}
\kappa_n=\sum_{\pi\in\conn\SP(n)}(-1)^{\abs{\pi}-1}T_{G(\pi)}(1,0)r_{\pi}, 
\end{equation}
where $G(\pi)$ is the crossing graph of $\pi$ and $T_{G(\pi)}$ its Tutte
polynomial. The proof of (\ref{eq:free2class}) in \cite{JV13}
is also valid for the multivariate case.

The purpose of the present article is to complete the picture for the relations
between classical, Boolean, free and monotone cumulants, extending some
identities to the multivariate case.  More precisely, we are able to prove the
following cumulant identities.
\begin{thm}\label{mainthm1}
The following identities hold for multivariate cumulants:  
\begin{align}
B_n&=\sum_{\pi\in \irr\MP(n)} \frac{1}{\abs{\pi}!}H_\pi
= \sum_{\pi\in \irr\NC(n)} \frac{1}{\nestforest(\pi)!}H_\pi,
 \label{mb}  \\
R_n&=\sum_{\pi\in \irr\MP(n)}\frac{(-1)^{\abs{\pi}-1}}{\abs{\pi}!}H_\pi
= \sum_{\pi\in \irr\NC(n)} \frac{(-1)^{\abs{\pi}-1}}{\nestforest(\pi)!}H_\pi,
 \label{mf}
\end{align}
where $\irr\MP(n)$ is the set of irreducible monotone partitions. 
\end{thm}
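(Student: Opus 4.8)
The plan is to deduce both identities from two moment--cumulant expansions together with the hook-length formula for forests. Recall the monotone moment--cumulant formula $m_n=\sum_{\sigma\in\MP(n)}\frac{1}{\abs{\sigma}!}\,H_\sigma$, the sum running over monotone partitions (noncrossing partitions whose blocks carry a compatible linear order, increasing along each chain of nestings), and the Boolean moment--cumulant formula $m_n=\sum_{\pi\in\IP(n)}B_\pi$ over interval partitions. The strategy is to read off the Boolean cumulant as the ``irreducible part'' of the monotone expansion, and then to transport the resulting formula to the free side through the Belinschi--Nica inversion \eqref{eq:boolean2free:BeNi1}.

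First I would prove the left-hand equality of \eqref{mb}. Given $\sigma\in\MP(n)$, decompose its underlying noncrossing partition along its interval (Boolean) decomposition into consecutive blocks $I_1,\dots,I_k$, on which $\sigma$ restricts to irreducible monotone partitions $\rho_1,\dots,\rho_k$ with $m_j=\abs{\rho_j}$. Since blocks in distinct intervals are never nested, the nesting forest of $\sigma$ is the disjoint union of those of the $\rho_j$, so a global monotone order is exactly a shuffle of the orders of the pieces; there are $\binom{m_1+\cdots+m_k}{m_1,\dots,m_k}$ such shuffles. This multinomial factor cancels the discrepancy between $\frac{1}{(m_1+\cdots+m_k)!}$ and $\prod_j\frac{1}{m_j!}$, so the monotone expansion factorizes over interval partitions, $m_n=\sum_{\pi\in\IP(n)}\prod_{I\in\pi}\bigl(\sum_{\rho\in\irr\MP(I)}\frac{1}{\abs{\rho}!}H_\rho\bigr)$. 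Comparing with $m_n=\sum_{\pi\in\IP(n)}B_\pi$ and inducting on $n$ yields $B_n=\sum_{\rho\in\irr\MP(n)}\frac{1}{\abs{\rho}!}H_\rho$. The right-hand equality of \eqref{mb} is then the hook-length formula for forests: for a fixed irreducible noncrossing partition $\pi$ the admissible monotone orders are the linear extensions of the nesting forest $\nestforest(\pi)$, of which there are $\abs{\pi}!/\nestforest(\pi)!$, so summing $\frac{1}{\abs{\pi}!}H_\pi$ over them replaces the weight $\frac{1}{\abs{\pi}!}$ by $\frac{1}{\nestforest(\pi)!}$.

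For \eqref{mf} I would substitute the first equality of \eqref{mb} into $R_n=\sum_{\pi\in\irr\NC(n)}(-1)^{\abs{\pi}-1}B_\pi$ and collect, for each irreducible $\bar\rho\in\irr\NC(n)$, the coefficient of $H_{\bar\rho}$. Writing that coefficient by means of the second form of \eqref{mb} reduces the claim to the purely combinatorial forest identity
\[
\sum_{\pi}(-1)^{\abs{\pi}-1}\prod_{V\in\pi}\frac{1}{\nestforest(\bar\rho|_V)!}=\frac{(-1)^{\abs{\bar\rho}-1}}{\nestforest(\bar\rho)!},
\]
where $\pi$ ranges over the irreducible coarsenings of $\bar\rho$ all of whose restrictions $\bar\rho|_V$ remain irreducible. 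Once this identity is established, both equalities in \eqref{mf} follow exactly as for \eqref{mb}.

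I expect this forest identity to be the main obstacle. In contrast to the Boolean case, several coarsenings $\pi$ feed into the same monomial $H_{\bar\rho}$ with competing signs, so the global sign $(-1)^{\abs{\bar\rho}-1}$ and the collapse onto the single forest factorial $\nestforest(\bar\rho)!$ are produced by genuine cancellation governed by the nesting structure, rather than by a clean factorization as before. The most transparent route is likely to recast the whole computation through the heap/pyramid generating-function calculus announced in the abstract, in which the exponential weights $\frac{1}{\abs{\rho}!}$ and the alternating signs $(-1)^{\abs{\rho}-1}$ both arise from the passage between a heap series (the moments) and its pyramid logarithm, while the Boolean--free dichotomy is exactly the involution recorded in \eqref{eq:free2boolean}--\eqref{eq:boolean2free:BeNi1}.
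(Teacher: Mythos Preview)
Your argument for \eqref{mb} is essentially the paper's own: the shuffle/multinomial cancellation you describe is exactly the multiplicativity of $\frac{1}{\nestforest(\pi)!}$ over irreducible factors (Proposition~\ref{prop:mpi=nestforest}\eqref{2A}), and your induction on $n$ is the M\"obius inversion on $\IP(n)$ spelled out. So far so good.

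For \eqref{mf}, however, you have a genuine gap. You correctly reduce the question to a signed forest identity, but you do not prove it; you only speculate that heap/pyramid techniques might help. That speculation is misplaced: the heap/pyramid machinery in this paper is deployed in Section~\ref{sec:proof3} for the classical-versus-Boolean/free formulas involving Tutte polynomials, and plays no role here. Your proposed identity, once unpacked, asks you to sum over all $\pi\in\irr\NC(n)$ together with, for each block $V\in\pi$, an irreducible noncrossing refinement of $V$; keeping track of which global $\rho$'s arise and with what nesting trees on each piece is exactly the kind of nontrivial cancellation you anticipate, and nothing in your outline suggests a mechanism for it.

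The paper avoids this entirely by a symmetry trick on generating functions. One introduces a formal tuple $\widetilde{\mathbf{X}}$ defined by $R_n(\widetilde{X}_{i_1},\dots,\widetilde{X}_{i_n})=-B_n(X_{i_1},\dots,X_{i_n})$, and shows, via the multivariate moment and $R$-transform identities, that the map $\mathbf{X}\mapsto\widetilde{\mathbf{X}}$ is an involution inverting $\mathfrak{M}_{\mathbf X}$, hence also $B_n(\widetilde{X})=-R_n(X)$. The key step is then to identify $\widetilde{\mathbf X}$ in distribution with the time $t=-1$ point of the monotone flow $\mathfrak{M}_{\mathbf X}(t,\cdot)$ from \cite{HS11b}, which immediately gives $H_n(\widetilde{X})=-H_n(X)$. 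Applying your already-proved \eqref{mb} to $\widetilde{\mathbf X}$ and reading off the signs yields \eqref{mf} in one line. This route replaces your unproved combinatorial identity by a short analytic/generating-function argument; if you want to salvage your approach, the forest identity you isolate is in fact \emph{equivalent} to \eqref{mf} given \eqref{mb} and \eqref{eq:boolean2free:BeNi1}, so the paper's argument can be read as an indirect proof of it.
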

\begin{thm}\label{mainthm2}
The following identities hold for univariate cumulants: 
\begin{align}
h_n&=\sum_{\pi\in \irr\NC(n)}\alpha_\pi r_\pi, \label{eq:fm}\\
h_n&=\sum_{\pi\in \irr\NC(n)} (-1)^{\abs{\pi}-1}\alpha_\pi b_\pi,\label{eq:bm}\\
h_n&=\sum_{\pi\in \irr\SP(n)}\alpha_{\bar{\pi}} \kappa_\pi, \label{eq:cm}
\end{align}
where $\bar{\sigma}\in\NC(n)$ denotes the noncrossing closure of $\sigma\in
\SP(n)$ and $\alpha_\pi$ is the linear part of the number of nonincreasing
labellings of the nesting forest of $\pi$ (which in the case of irreducible
partitions consists of precisely one tree). This quantity will be
defined rigorously in Section~\ref{sec:colored}.
\end{thm}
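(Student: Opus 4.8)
The three identities express the monotone cumulants $h_n$ in terms of the free, Boolean and classical ones, and each is an inversion of something already available: \eqref{eq:fm} and \eqref{eq:bm} invert the univariate forms of \eqref{mf} and \eqref{mb}, while \eqref{eq:cm} will follow by composing \eqref{eq:fm} with the classical-to-free relation \eqref{eq:class2free}. The plan is to perform the inversion not by an abstract M\"obius computation in the incidence algebra but by running the one-parameter monotone convolution semigroup, which converts the inversion into the extraction of a single Taylor coefficient and lets the order polynomial of the nesting forest appear on its own.

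Let $\mu^{\rhd t}$ denote the monotone convolution powers of a fixed distribution $\mu$; their monotone cumulants are homogeneous, $h_n(\mu^{\rhd t})=t\,h_n(\mu)$, so that $h_\pi(\mu^{\rhd t})=t^{\abs{\pi}}h_\pi(\mu)$. Substituting this into the univariate forms of \eqref{mf} and \eqref{mb} exhibits $r_n(\mu^{\rhd t})$ and $b_n(\mu^{\rhd t})$ as polynomials in $t$, and since the power of $t$ attached to $\pi$ is $\abs{\pi}$, only the one-block partition contributes to the coefficient of $t^1$; its nesting forest is a single vertex of weight one, so I obtain the normalisation $h_n=[t^1]\,r_n(\mu^{\rhd t})=[t^1]\,b_n(\mu^{\rhd t})$, where $[t^1]$ denotes the coefficient of $t$.

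The crux is a second, independent expansion of the same polynomials, this time in the free (respectively Boolean) cumulant basis of $\mu$ itself. I would prove, from the combinatorics of the iterated monotone product, the coloring lemma that for every positive integer $N$
\begin{equation*}
b_n(\mu^{\rhd N})=\sum_{\pi\in\irr\NC(n)}\Omega_{\nestforest(\pi)}(N)\,b_\pi,\qquad
r_n(\mu^{\rhd N})=\sum_{\pi\in\irr\NC(n)}(-1)^{\abs{\pi}-1}\,\Omega_{\nestforest(\pi)}(N)\,r_\pi,
\end{equation*}
where $\Omega_{\nestforest(\pi)}(N)$ is the strict order polynomial of the nesting forest, counting the strictly monotone labellings of $\nestforest(\pi)$ by $\{1,\dots,N\}$ that record from which factor each block originates. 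The strictness is forced because two nested blocks cannot come from the same factor, and the sign in the free version is the M\"obius sign already visible in \eqref{eq:boolean2free:BeNi1}. Both sides are polynomials in $N$, hence the identities persist for the formal variable $t$; taking $[t^1]$ and comparing with the normalisation of the previous paragraph gives $h_n=\sum_{\pi}\bigl([t^1]\Omega_{\nestforest(\pi)}\bigr)b_\pi$ and $h_n=\sum_{\pi}(-1)^{\abs{\pi}-1}\bigl([t^1]\Omega_{\nestforest(\pi)}\bigr)r_\pi$. Order-polynomial reciprocity identifies $(-1)^{\abs{\pi}-1}[t^1]\Omega_{\nestforest(\pi)}$ with the linear part of the number of \emph{nonincreasing} labellings of $\nestforest(\pi)$, which is exactly $\alpha_\pi$; this yields \eqref{eq:fm}, and the same reciprocity run the other way gives \eqref{eq:bm}.

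For \eqref{eq:cm} I would feed the classical-to-free expansion \eqref{eq:class2free} blockwise into \eqref{eq:fm}: writing each factor $r_{\abs{V}}$ as a sum over connected set partitions of the block $V$ regroups $\sum_\pi\alpha_\pi r_\pi$ as a sum over irreducible set partitions $\rho\in\irr\SP(n)$, and the task is to check that the accumulated coefficient of $\kappa_\rho$ collapses, through the noncrossing closure, to $\alpha_{\bar{\rho}}$. The step I expect to be the main obstacle is the coloring lemma of the previous paragraph: one must organise the iterated monotone product carefully enough to see that the admissible factor-labellings are counted by the strict order polynomial of the nesting forest, and to pin down the sign in the free case. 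Once that is in place the passages to \eqref{eq:fm} and \eqref{eq:bm} are a one-line coefficient extraction followed by reciprocity, and \eqref{eq:cm} is a reorganisation of sums.
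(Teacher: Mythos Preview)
Your approach is a genuinely different route from the paper's, and the overall architecture is sound, but the crux you yourself flag---the coloring lemma---is left entirely unproven, and it is not a minor technicality: it carries the whole argument.

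Here is the comparison. The paper does \emph{not} invert \eqref{mf}/\eqref{mb} nor analyze the monotone semigroup directly. Instead it imports Lenczewski's machinery of strongly matricially free arrays: identity \eqref{eq:Lenc} expresses the \emph{moments} of a monotone i.i.d.\ sum $X_1+\cdots+X_N$ as $\sum_{\pi\in\NC(n)}P_\pi(N)\,r_\pi$, where $P_\pi(N)$ counts \emph{weak} (nondecreasing) labellings of the nesting forest---not the strict ones. Since $h_n$ is by definition the coefficient of $N$ in the $n$th moment of this sum, $\alpha_\pi=P_\pi'(0)$ drops out and \eqref{eq:fm} follows with no reciprocity needed. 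Identity \eqref{eq:bm} is then obtained by the $\widetilde{X}$ involution of Lemma~\ref{lem1}, exactly as for \eqref{mf}. Your derivation of \eqref{eq:cm} matches the paper's.

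Your strategy would be more self-contained (no dependence on \cite{L12}) and has the pleasant feature that order-polynomial reciprocity explains why the strict and the weak counts produce the same linear coefficient up to the sign $(-1)^{\abs{\pi}-1}$, which in turn dovetails with the Boolean/free duality of Section~\ref{sec:proof1}. But two points need care before this works. First, your heuristic ``strictness is forced because two nested blocks cannot come from the same factor'' is exactly the opposite of what happens in the Lenczewski model, where the diagonal variables $X_{ii}$ allow nested blocks to share a color; so you must specify precisely which decomposition of $\mu^{\rhd N}$ you are using and prove that the strict order polynomial really is the coefficient of $b_\pi$ (respectively $r_\pi$) there. Second, the signed version for free cumulants is not an immediate consequence of the Boolean one plus \eqref{eq:boolean2free:BeNi1}; you would still have to control how the inner M\"obius sign interacts with the strict count block by block. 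Until those two computations are written down, the proposal is a plan rather than a proof.
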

\begin{rem}
 Calculations indicate that a multivariate analogue of
 Theorem \ref{mainthm2} also holds, 
 but at present we do not know how to prove it. 
\end{rem}

The proof of the Boolean-to-classical cumulant formula follows the techniques
of the proof (\ref{eq:free2class}) used in  \cite{JV13}.
\begin{thm}\label{mainthm3} 
  \begin{equation}
    \label{eq:bool2class}
    K_n=\sum_{\pi\in\irr\SP(n)}(-1)^{\abs{\pi}-1}T_{\tilde{G}(\pi)}(1,0)B_{\pi},
  \end{equation}
where $\tilde{G}(\pi)$ is the anti-interval graph of $\pi$ and $T_{\tilde{G}(\pi)}$ is its Tutte polynomial (see Section \ref{sec:proof3}).
\end{thm}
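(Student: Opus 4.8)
The plan is to prove Theorem~\ref{mainthm3} by inverting the already-known Boolean-to-classical expansion \eqref{eq:class2Boolean}, following the Tutte-polynomial machinery that Josuat-Verg\`es used in \cite{JV13} to establish \eqref{eq:free2class}. Let me think about how that inversion should go and where the combinatorial content lives.

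The key mechanism is this. Equation \eqref{eq:class2Boolean}, $b_n=\sum_{\pi\in\irr\SP(n)}\kappa_\pi$ (and its multivariate lift $B_n=\sum_{\pi\in\irr\SP(n)}K_\pi$), expresses Boolean cumulants as a sum of products of classical cumulants indexed by irreducible partitions. Iterating this relation, a product $B_\sigma$ over blocks of an arbitrary partition $\sigma$ refines into a sum over partitions $\pi$ that coarsen $\sigma$ with each block of $\sigma$ contributing an irreducible sub-partition; summing $(-1)^{\abs{\pi}-1}T_{\tilde G(\pi)}(1,0)B_\pi$ and tracking the coefficient of a fixed $K_\sigma$ reduces the claim to a purely combinatorial identity of the form $\sum (-1)^{\abs{\pi}-1}T_{\tilde G(\pi)}(1,0)=\delta_{\sigma,\hat 1}$, where the sum ranges over irreducible partitions $\pi$ whose classical-cumulant expansion contains $K_\sigma$. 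So the whole theorem collapses to a Tutte-polynomial identity attached to the anti-interval graph $\tilde G(\pi)$.

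The decisive combinatorial input is the evaluation $T_G(1,0)$, which counts acyclic orientations of $G$ with a unique fixed source (equivalently, by Stanley's theorem it is, up to sign, a chromatic-polynomial value), and its behaviour under the deletion–contraction recursion $T_G=T_{G\setminus e}+T_{G/e}$ for an edge $e$ that is neither a bridge nor a loop. First I would set up the anti-interval graph $\tilde G(\pi)$ precisely (deferring to Section~\ref{sec:proof3}): its vertices are the blocks of $\pi$, with an edge recording when two blocks fail to be ``interval-nested,'' so that $\pi$ is irreducible exactly when $\tilde G(\pi)$ is connected. Then I would show that the signed sum $\sum_\pi (-1)^{\abs{\pi}-1}T_{\tilde G(\pi)}(1,0)$ telescopes through deletion–contraction, each merging of two blocks corresponding to a contraction of an edge, with the alternating sign matching the drop in $\abs{\pi}$; this is exactly the mechanism by which $T_G(1,0)$ produces the M\"obius-type cancellations that invert a zeta-like summation over the partition lattice.

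The main obstacle, as in \cite{JV13}, will be identifying the correct graph $\tilde G(\pi)$ so that the Tutte evaluation $T_{\tilde G(\pi)}(1,0)$ genuinely coincides with the M\"obius coefficient of the inversion, rather than merely establishing an abstract M\"obius inverse of \eqref{eq:class2Boolean} with unidentified coefficients. Concretely, I expect the hard step to be proving that summing $T_{\tilde G}(1,0)$ over the fibers of the coarsening map (the partitions $\pi$ refining to a given $\sigma$ via irreducible pieces) reproduces exactly the $\delta_{\sigma,\hat 1}$ pattern; this requires a structural understanding of how anti-interval graphs behave under block mergers, and it is where the analogy with the crossing graph $G(\pi)$ of \cite{JV13} must be adapted from the noncrossing/free setting to the interval/Boolean setting. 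Once the deletion–contraction bookkeeping is shown to be compatible with the interval structure, the identity \eqref{eq:bool2class} follows by the same formal inversion argument, now with fully determined Tutte coefficients.
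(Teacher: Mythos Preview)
Your proposal takes a different route from the paper and, as written, has a genuine gap at the key combinatorial step.

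The paper does \emph{not} invert \eqref{eq:class2Boolean}. Instead it passes through moments: start from the M\"obius formula $K_n=\sum_{\sigma\in\SP(n)}\varphi_\sigma\,\mu_\SP(\sigma,\hat 1_n)$, replace each $\varphi_\sigma$ by its Boolean expansion $\sum_{\pi}B_\pi$ (where $\pi\leq\sigma$ and $\pi|_W$ is an \emph{interval} partition for every block $W\in\sigma$), and interchange the sums. The coefficient of $B_\pi$ is then $\sum_{\sigma\succeq\pi}\mu_\SP(\sigma,\hat 1_n)$, and this sum is evaluated directly by the graph identity \eqref{eq:graph} of \cite{JV13} at $q=0$: the condition ``$\pi|_W$ interval'' is exactly $i(E,\tilde\sigma)=0$ for the anti-interval graph $\tilde G(\pi)$, and \eqref{eq:graph} gives $(-1)^{\abs{\pi}-1}T_{\tilde G(\pi)}(1,0)$ when $\tilde G(\pi)$ is connected (equivalently $\pi$ irreducible) and $0$ otherwise. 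No new Tutte computation is needed; the whole proof is a few lines once \eqref{eq:graph} is quoted.

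Your route instead substitutes $B_\pi=\sum K_\sigma$ (with $\sigma\leq\pi$ and each $\sigma|_W$ \emph{irreducible}) into the right side of \eqref{eq:bool2class} and must then show, for every $\sigma$, that
\[
\sum_{\substack{\pi\in\irr\SP(n),\ \pi\geq\sigma\\ \sigma|_W\in\irr\SP(W)\ \forall W\in\pi}} (-1)^{\abs{\pi}-1}T_{\tilde G(\pi)}(1,0)=[\sigma=\hat 1_n].
\]
This is a different identity from the one the paper uses, and your plan to prove it by deletion--contraction does not go through as stated: contraction of an edge of $\tilde G(\pi)$ does correspond to merging two blocks of $\pi$, but \emph{deletion} of an edge has no partition counterpart (two blocks whose convex hulls overlap cannot be declared ``interval-compatible'' by fiat), so the recursion $T_G=T_{G\setminus e}+T_{G/e}$ does not telescope over the sum on partitions. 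You would either have to reprove something equivalent to \eqref{eq:graph} from scratch, or switch to the acyclic-orientation/pyramid interpretation of $T_{\tilde G}(1,0)$ and give a sign-reversing involution---neither of which is sketched. The paper's moment-first approach sidesteps all of this by landing directly on a sum of M\"obius values that \eqref{eq:graph} evaluates in one stroke.
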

Alternatively, the values of the Tutte polynomials
in \eqref{eq:free2class} and \eqref{eq:bool2class}
can be interpreted as certain pyramids in the sense of Cartier-Foata,
see Section~\ref{sec:proof3} for details.

Yet another interpretation  expresses classical cumulants in terms of Boolean
cumulants via  permutation statistics.
\begin{thm} 
  \label{thm:bool2classcycles}
  Denote by $\Cycles_n$ the set of cyclic permutations of order $n$ and let $\cycleruns(\sigma)$ be the set partition defined by the cycle runs of $\sigma$ (see Section \ref{permutation}). Then
  \begin{equation}\label{eq:bool2classcycles}
    K_n = \sum_{\sigma\in \Cycles_n} (-1)^{\#\cycleruns(\sigma)-1} B_{\cycleruns(\sigma)}. 
  \end{equation}
\end{thm}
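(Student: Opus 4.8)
The plan is to connect the cyclic-permutation sum in \eqref{eq:bool2classcycles} to the already-established Tutte-polynomial formula \eqref{eq:bool2class}, by showing that the coefficient attached to each irreducible partition $\pi$ on the right-hand side of \eqref{eq:bool2class} can be reorganized as a signed sum over cyclic permutations whose cycle runs induce $\pi$. The key object is the map $\sigma \mapsto \cycleruns(\sigma)$ sending a cyclic permutation of $\{1,\dots,n\}$ to the set partition recorded by its cycle runs. First I would fix an irreducible set partition $\pi \in \irr\SP(n)$ and count, with the sign $(-1)^{\#\cycleruns(\sigma)-1}$, all cyclic permutations $\sigma$ with $\cycleruns(\sigma) = \pi$; the claim to be verified is that this signed count equals $(-1)^{\abs{\pi}-1} T_{\tilde G(\pi)}(1,0)$, the coefficient appearing in Theorem~\ref{mainthm3}. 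Since both formulas express the same cumulant $K_n$ in terms of the multiplicative family $B_\pi$, and the $B_\pi$ are algebraically independent as $\pi$ ranges over $\irr\SP(n)$ (being products of free Boolean cumulants over the blocks), matching coefficients blockwise suffices.

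The heart of the argument is therefore the combinatorial identity
\begin{equation}
  \sum_{\substack{\sigma\in\Cycles_n\\ \cycleruns(\sigma)=\pi}} (-1)^{\#\cycleruns(\sigma)-1}
  = (-1)^{\abs{\pi}-1} T_{\tilde G(\pi)}(1,0),
\end{equation}
for each irreducible $\pi$. On the left the sign depends only on $\pi$ through $\#\cycleruns(\sigma)=\abs{\pi}$, so it pulls out and the content reduces to showing that the number of cyclic permutations realizing a prescribed cycle-run partition $\pi$ equals $T_{\tilde G(\pi)}(1,0)$. This is a purely enumerative statement about permutations and their runs. I would approach it via the known interpretation of $T_G(1,0)$ as counting acyclic orderings or externally active structures of the graph $G$ — here the anti-interval graph $\tilde G(\pi)$ defined in Section~\ref{sec:proof3} — and then set up a bijection between such structures and the cyclic permutations whose descent/run pattern along the cycle is compatible with the blocks of $\pi$. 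Concretely, a cyclic permutation with a given cycle-run decomposition is determined by how the maximal increasing (or decreasing) runs are interleaved around the cycle, and this interleaving data should correspond exactly to the spanning-tree / activity data enumerated by $T_{\tilde G(\pi)}(1,0)$.

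The main obstacle I anticipate is establishing this bijection cleanly, i.e.\ checking that the anti-interval graph $\tilde G(\pi)$ is precisely the combinatorial structure whose $T(1,0)$ evaluation governs the cycle-run count, rather than some closely related but inequivalent graph. The subtlety is orientational: cycle runs carry a circular rather than linear order, so I would need to verify that the cyclic symmetry of $\Cycles_n$ matches the way $T_{\tilde G(\pi)}(1,0)$ already accounts for (or quotients out) a base point. An efficient route that sidesteps a direct bijection is to invoke the pyramid interpretation promised after Theorem~\ref{mainthm3}: both $T_{\tilde G(\pi)}(1,0)$ and the signed cyclic-permutation count can be identified with the number of Cartier–Foata pyramids built on the comparability data of $\pi$, and a deletion-contraction recursion on $\tilde G(\pi)$ matched against a recursion on the cyclic permutations (removing the block containing a fixed extremal element) should close the induction. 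Once the per-block identity holds, summing over $\pi\in\irr\SP(n)$ and comparing with \eqref{eq:bool2class} yields \eqref{eq:bool2classcycles} immediately.
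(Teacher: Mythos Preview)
Your plan coincides with the paper's \emph{second} proof of this theorem: the paper establishes precisely the combinatorial identity you isolate, namely that for each irreducible $\pi$ the number of cyclic permutations $\sigma\in\Cycles_n$ with $\cycleruns(\sigma)=\pi$ equals $T_{\tilde G(\pi)}(1,0)$ (Lemma~\ref{lem:intervaltutte}), by an explicit bijection between such permutations and pyramidal interval heaps on $\pi$ (equivalently, acyclic orientations of $\tilde G(\pi)$ with source the first block). One small point you pass over: you must also check that $\cycleruns(\sigma)$ is \emph{always} irreducible when $\sigma$ is cyclic, so that the left side of your displayed identity is not missing contributions to reducible $\pi$; this is easy (if some $i\le m$ has $\sigma(i)>m$ then $i$ and $\sigma(i)$ lie in the same cycle run), and the paper records it as a byproduct of the bijection.

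It is worth knowing that the paper's \emph{first} proof takes a different, more self-contained route that bypasses Theorem~\ref{mainthm3} and Tutte polynomials altogether. It proves directly, via a sign-reversing involution on $\SG_n$ (splitting or merging a cycle at its last descent), the moment identity
\[
\varphi(X_1\cdots X_n)=\sum_{\sigma\in\SG_n}(-1)^{\#\cycleruns(\sigma)-\#\cycles(\sigma)}B_{\cycleruns(\sigma)}(X_1,\dots,X_n),
\]
and then recovers $K_n$ by grouping according to the cycle partition $\cycles(\sigma)$ and applying M\"obius inversion on $\SP(n)$. That argument yields the multivariate statement without ever invoking the evaluation $T_{\tilde G(\pi)}(1,0)$, whereas your route (and the paper's second proof) trades the involution for the bijection with pyramids but must rely on Theorem~\ref{mainthm3} as input.
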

There is a bijection between $\Cycles_n$ and $\{\sigma \in \SG_n \mid \sigma(1)=1\}$: given $\pi=(1,\pi(1),\dots,\pi^{n-1}(1)) \in\Cycles_n$, we define a permutation $\sigma(1)=1, \sigma(k)=\pi^{k-1}(1), ~2 \leq k \leq n$. Then we may rewrite Theorem~\ref{thm:bool2classcycles} into 
\begin{cor}\label{thm:bool2classpermutation1} Let $\SG_n$ be the set of permutations of order $n$, let $\runs(\sigma)$ be the set partition associated to the runs of $\sigma \in\SG_n$ and let $d(\sigma)$ be the number of descents of $\sigma \in\SG_n$ (see Section \ref{permutation}). Then 
  \begin{equation}\label{eq:bool2classpermutation0}
  K_n = \sum_{\substack{\sigma\in \SG_n\\ \sigma(1)=1}} (-1)^{d(\sigma)} B_{\runs(\sigma)}. 
  \end{equation}
\end{cor}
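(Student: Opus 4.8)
The plan is to derive Corollary~\ref{thm:bool2classpermutation1} directly from Theorem~\ref{thm:bool2classcycles} by transporting the sum \eqref{eq:bool2classcycles} along the bijection $\Phi\colon \Cycles_n \to \{\sigma\in\SG_n\mid \sigma(1)=1\}$ recorded just above the statement, where $\Phi(\pi)=\sigma$ is the permutation whose one-line notation is the cyclic word $(1,\pi(1),\dots,\pi^{n-1}(1))$. Since every $\pi\in\Cycles_n$ is an $n$-cycle, the orbit of $1$ is all of $\{1,\dots,n\}$, so $\Phi$ is well defined, and its inverse sends $\sigma$ to the $n$-cycle $(\sigma(1),\sigma(2),\dots,\sigma(n))$; thus $\Phi$ is indeed a bijection. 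It then suffices to check that $\Phi$ matches the summands term by term, namely that $\cycleruns(\pi)=\runs(\Phi(\pi))$ and $\#\cycleruns(\pi)-1=d(\Phi(\pi))$.

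For the first identity I would note that the one-line word of $\sigma=\Phi(\pi)$ is literally the cycle of $\pi$ read starting from its minimal entry $1$, and that $\cycleruns(\pi)$ is by definition the partition into maximal cyclically increasing runs of this word. The key point is that reading from $1$ removes the cyclic ambiguity: the cyclic predecessor of $1$ is the last letter $\pi^{n-1}(1)>1$, so the junction between position $n$ and position $1$ is always a descent and no ascending run wraps around. Consequently the cyclic runs of $\pi$ coincide with the ordinary ascending runs of the linear word $\sigma(1)\sigma(2)\cdots\sigma(n)$, which are exactly the blocks of $\runs(\sigma)$; hence $\cycleruns(\pi)=\runs(\sigma)$, and in particular $B_{\cycleruns(\pi)}=B_{\runs(\sigma)}$.

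The sign bookkeeping is then immediate: the number of ascending runs of any nonempty word exceeds its number of descents by exactly $1$, so $\#\runs(\sigma)=d(\sigma)+1$, and combined with the previous step this gives $\#\cycleruns(\pi)=d(\sigma)+1$. Therefore $(-1)^{\#\cycleruns(\pi)-1}=(-1)^{d(\sigma)}$, and reindexing \eqref{eq:bool2classcycles} by $\sigma=\Phi(\pi)$ produces \eqref{eq:bool2classpermutation0}.

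The only genuinely nontrivial step is the run-matching of the second paragraph, and its crux is the elementary but essential observation that placing the global minimum $1$ at the head of the cyclic word forces a descent at the wrap-around, so that the cyclic and linear notions of ascending run agree; everything else is a direct substitution into Theorem~\ref{thm:bool2classcycles}.
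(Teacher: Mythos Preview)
Your argument is correct and follows exactly the route indicated by the paper, which simply records the bijection $\pi\mapsto\sigma$ with $\sigma(k)=\pi^{k-1}(1)$ in the sentence preceding the corollary and asserts that Theorem~\ref{thm:bool2classcycles} rewrites into \eqref{eq:bool2classpermutation0}. You have filled in the one detail the paper leaves implicit, namely that starting the cyclic word at $1$ forces a descent at the wrap-around so that $\cycleruns(\pi)=\runs(\sigma)$ and hence $\#\cycleruns(\pi)-1=d(\sigma)$.
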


Understanding the coefficients of the remaining, monotone-to-classical cumulant formula $$K_n=\sum_{\pi\in\SP(n)}\beta(\pi) H_{\pi},$$
seems to require a more detailed treatment. We compute $\beta$ for some particular cases and list some of its properties. In particular, we show that the coefficients $\beta$ only depend on an anti-interval digraph, and moreover we show that
\begin{thm}\label{classical-monotone}
  \begin{enumerate}[\rm(1)]
   \item If $\pi$ is reducible, then $\beta(\pi)=0$.    
   \item 
    If $\pi$ is irreducible and has no nestings, 
    then $\beta(\pi)$ coincides with the
    coefficient
    $(-1)^{\abs{\pi}-1} T_{G(\pi)}(1,0)$ from formula
    \eqref{eq:free2class}.
   \item
      If $\pi\in \irr\NC$ and has depth 1 or 2, then
      $
      \beta(\pi)=
      \frac{(-1)^{\abs{\pi}-1}}{\abs{\pi}}. 
      $
  \end{enumerate}
\end{thm}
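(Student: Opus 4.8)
The plan is to make $\beta$ explicit by composing the Boolean-to-classical expansion of Theorem~\ref{mainthm3} with the Boolean-to-monotone expansion \eqref{mb}. Substituting $B_{\abs V}=\sum_{\rho\in\irr\NC(V)}\tfrac{1}{\nestforest(\rho)!}H_\rho$ into $K_n=\sum_{\sigma\in\irr\SP(n)}(-1)^{\abs\sigma-1}T_{\tilde G(\sigma)}(1,0)\,B_\sigma$ and collecting the monotone cumulants $H_\pi$ that arise, I obtain
\[
\beta(\pi)=\sum_{\sigma}(-1)^{\abs\sigma-1}\,T_{\tilde G(\sigma)}(1,0)\prod_{V\in\sigma}\frac{1}{\nestforest(\pi|_V)!},
\]
where $\sigma$ ranges over the irreducible partitions with $\sigma\ge\pi$ such that $\pi|_V\in\irr\NC(V)$ for every block $V\in\sigma$. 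Throughout I use the elementary fact that a noncrossing partition of an interval is irreducible precisely when its least and greatest elements lie in one block; in particular an irreducible noncrossing partition with at least two blocks always contains a nesting. All three assertions are then read off from this formula.

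For (1) the key step is a stability lemma: if $\sigma$ is irreducible and $\pi|_V\in\irr\NC(V)$ for each $V\in\sigma$, then $\pi$ is irreducible. Indeed, for any cut $1\le k<n$ irreducibility of $\sigma$ gives a block $V$ with $\min V\le k<\max V$; since $\pi|_V$ is irreducible noncrossing, $\min V$ and $\max V$ lie in a common block $W\in\pi$, which then straddles $k$. Hence no $k$ cuts $\pi$, so $\pi$ is irreducible. Thus every $\pi$ occurring with nonzero coefficient in the formula is irreducible, i.e. $\beta(\pi)=0$ whenever $\pi$ is reducible.

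For (2), let $\pi$ be irreducible with no nestings. If $\sigma>\pi$ strictly, some block $V\in\sigma$ swallows at least two blocks of $\pi$, so $\pi|_V\in\irr\NC(V)$ has $\ge2$ blocks and hence a nesting, which would be a nesting of $\pi$ — impossible. Only $\sigma=\pi$ survives, and since each $\nestforest(\pi|_V)$ is a single node the product equals $1$, giving $\beta(\pi)=(-1)^{\abs\pi-1}T_{\tilde G(\pi)}(1,0)$. By the description of the anti-interval graph in Section~\ref{sec:proof3}, $\tilde G(\pi)$ differs from the crossing graph $G(\pi)$ only in the edges produced by nestings, so for nesting-free $\pi$ one has $\tilde G(\pi)=G(\pi)$ and the value matches the coefficient in \eqref{eq:free2class}.

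For (3), let $\pi\in\irr\NC$ have depth $1$ or $2$: its nesting forest is a root block $V_0\ni 1,n$ together with $k\ge0$ leaf children $V_1,\dots,V_k$. A $\sigma$-block avoiding $V_0$ must be a single child, since two or more siblings put their least and greatest elements in different blocks and are reducible; a $\sigma$-block containing $V_0$ may absorb an arbitrary subset $S'\subseteq\{V_1,\dots,V_k\}$ because $V_0$ supplies the common extremes. Writing $W$ for the block containing $V_0$ and $j=\abs{S'}$, the forest of $\pi|_W$ is a root with $j$ leaves so $\nestforest(\pi|_W)!=j+1$, the remaining $k-j$ children are singletons, and $\tilde G(\sigma)$ is the star with centre $W$ and those $k-j$ nested leaves — a tree, whence $T_{\tilde G(\sigma)}(1,0)=1$. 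Summing over the $\binom kj$ choices of $S'$ yields
\[
\beta(\pi)=\sum_{j=0}^{k}\binom kj(-1)^{k-j}\frac{1}{j+1}=\int_0^1(s-1)^k\,ds=\frac{(-1)^{k}}{k+1}=\frac{(-1)^{\abs\pi-1}}{\abs\pi},
\]
using $\tfrac{1}{j+1}=\int_0^1 s^j\,ds$. The main obstacle is not any one computation but pinning down the composition — deciding exactly which coarsenings $\sigma$ meet the constraint $\pi|_V\in\irr\NC(V)$ and evaluating the attached anti-interval Tutte polynomials. For depth $\le2$ these graphs are forests, so $T(1,0)=1$ and the arithmetic collapses; in higher depth the anti-interval graphs acquire cycles and the Tutte values no longer trivialize, which is exactly why the general coefficients $\beta$ stay out of reach.
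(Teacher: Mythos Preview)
Your argument is correct and follows a genuinely different route from the paper's.

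The paper derives the working formula for $\beta(\pi)$ by going through moments: combining \eqref{cmclassical} with \eqref{mcmonotone} yields
\[
\beta(\pi)=\sum_{\sigma\trianglerighteq\pi}\frac{\mu_{\SP}(\sigma,\hat 1_n)}{\tau(\pi|\sigma)!},
\]
a sum over \emph{all} coarsenings $\sigma$ with $\pi|_V\in\NC(V)$, weighted by the M\"obius function. Part~(1) then needs Weisner's lemma; part~(2) reduces to the identity $\sum_{\sigma\trianglerighteq\pi}\mu_\SP(\sigma,\hat 1_n)=(-1)^{\abs\pi-1}T_{G(\pi)}(1,0)$ borrowed from \cite{JV13}; and part~(3) is handled by a generating-function computation culminating in $\alpha_n=(-1)^{n-1}$.

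You instead factor through Boolean cumulants, composing Theorem~\ref{mainthm3} with \eqref{mb}. This produces a sum over \emph{irreducible} coarsenings $\sigma$ with $\pi|_V\in\irr\NC(V)$, weighted by $(-1)^{\abs\sigma-1}T_{\tilde G(\sigma)}(1,0)$. The payoff is that each part becomes more elementary: (1) is a one-line stability argument (no Weisner); (2) collapses to the single term $\sigma=\pi$ and the observation $\tilde G(\pi)=G(\pi)$ in the absence of nestings, so the Tutte value appears directly rather than via the M\"obius summation of \cite{JV13}; and (3) reduces to a clean binomial identity because all surviving $\tilde G(\sigma)$ are stars, hence trees with $T(1,0)=1$. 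The price you pay is that your formula hides the M\"obius structure, which is what the paper exploits for the more conceptual Weisner argument and which is arguably closer to a general attack on $\beta$. Both derivations rely on the Tutte/heap machinery of Section~\ref{sec:proof3}, but yours localizes it to a single black-box invocation of Theorem~\ref{mainthm3}.
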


Thus various combinatorial objects contribute to the proofs and the
paper is organized accordingly. 
Types of partitions and notation are defined in Section
\ref{sec:partition}. 
We collect some combinatorial properties of monotone partitions in Section \ref{sec:monotonepartition}. 
Theorems \ref{mainthm1}, \ref{mainthm2}, \ref{mainthm3},
\ref{thm:bool2classcycles}, \ref{classical-monotone} are proved in Sections \ref{sec:proof1},
\ref{sec:colored}, \ref{sec:proof3}, \ref{permutation}, \ref{sec8}, respectively.

\section{Definitions and preliminary results}\label{sec:partition}
Concepts on partitions and ordered partitions are summarized below,
first let us recall some well known facts from the theory of \emph{posets} 
(partially ordered sets). 
For details on the latter the standard reference is \cite{Stanley:2012:enumerative1}.
\begin{prop}[Principle of M\"obius inversion]
  \label{prop:moebius}
  On any poset $(P,\leq)$ there is a unique \emph{M\"obius function}
  $\mu:P\times P \to \IZ$ such that for any pair of functions $f,g:P\to \IC$
  (in fact any abelian group in place of $\IC$) the identity
  \begin{equation}
    \label{eq:fx=sumgy}
  f(x)= \sum_{y\leq x} g(y)
  \end{equation}
  holds for every $x\in P$ if and only if
  \begin{equation}
  g(x)= \sum_{y\leq x} f(y)\,\mu(y,x). 
  \end{equation}
  In particular, if, given $f$, two functions $g_1$ and $g_2$ satisfy
  \eqref{eq:fx=sumgy}, then $g_1$ and $g_2$ coincide.
\end{prop}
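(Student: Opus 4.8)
\emph{The plan is to} prove the existence and uniqueness of the Möbius function together with the equivalence of the two inversion formulas, all at once, by treating everything as an identity in the incidence algebra of the poset. The key observation is that the stated equivalence is really the statement that a certain element of the incidence algebra has a two-sided inverse, so the entire proposition reduces to elementary algebra once the right framework is set up.

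\emph{First I would} restrict, without loss of generality, to a fixed principal order ideal: for a given $x\in P$ only the finitely many $y\leq x$ enter either sum, so I may assume the poset is locally finite (every interval $[y,x]$ is finite), which is the standing hypothesis making all the displayed sums finite. On such a poset I introduce the incidence algebra consisting of functions $f\colon\{(y,x):y\leq x\}\to\IC$ with the convolution product $(f*g)(y,x)=\sum_{y\leq z\leq x} f(y,z)\,g(z,x)$; this product is associative precisely because the poset is locally finite, and its identity element is the Kronecker delta $\delta(y,x)$, equal to $1$ if $y=x$ and $0$ otherwise. I would then single out the zeta function $\zeta(y,x)=1$ for all $y\leq x$.

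\emph{The crux of the argument} is that $\zeta$ is invertible in the incidence algebra. I would prove this by constructing its inverse $\mu$ explicitly by induction on the length of the interval $[y,x]$: set $\mu(x,x)=1$, and for $y<x$ define $\mu(y,x)=-\sum_{y<z\leq x}\mu(z,x)$, which is forced by the requirement $(\mu*\zeta)(y,x)=\delta(y,x)$. A symmetric downward recursion shows $\zeta*\mu=\delta$ as well, so $\mu$ is a genuine two-sided inverse; uniqueness of $\mu$ is automatic since inverses in any associative unital algebra are unique. This $\mu$ is the claimed Möbius function, and the integer-valuedness is clear from the recursion since each step only adds and subtracts integers.

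\emph{Finally}, I would translate the two inversion formulas into the incidence-algebra language. Reading $f$ and $g$ as functions on $P$, the hypothesis $f(x)=\sum_{y\leq x} g(y)$ is exactly $f=g*\zeta$ (thinking of $g$ as acting on the left), and the conclusion $g(x)=\sum_{y\leq x} f(y)\,\mu(y,x)$ is exactly $g=f*\mu$. Since $\mu$ is the two-sided inverse of $\zeta$, the implication $f=g*\zeta \iff g=f*\mu$ is immediate: multiply on the right by $\mu$ in one direction and by $\zeta$ in the other. The last sentence of the proposition, uniqueness of $g$ given $f$, then follows because $g$ is recovered by the explicit formula $g=f*\mu$, so any two solutions coincide. \emph{The only step requiring care} is the bookkeeping that the locally finite hypothesis really does make the convolution well defined and associative; once that is in place the rest is formal, so I expect no serious obstacle.
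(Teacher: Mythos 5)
Your proof is correct, but note that the paper itself gives no proof of this proposition: it is stated as a classical fact with the references \cite{Schutzenberger:1947:certains,Rota:1984:foundations1} and \cite{Stanley:2012:enumerative1} standing in for the argument, and your incidence-algebra proof is exactly the standard one found in those sources, so there is nothing to compare beyond that. Two small points of bookkeeping deserve correction. First, with your convolution convention $(f*g)(y,x)=\sum_{y\leq z\leq x}f(y,z)\,g(z,x)$, the recursion $\mu(y,x)=-\sum_{y<z\leq x}\mu(z,x)$ is the one forced by $(\zeta*\mu)(y,x)=\delta(y,x)$, not by $(\mu*\zeta)(y,x)=\delta(y,x)$; you have the two identities swapped, and the symmetric recursion then yields $\mu*\zeta=\delta$, after which associativity identifies the left and right inverses as you say. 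Second, for the displayed sums $\sum_{y\leq x}$ to be finite you need each principal order ideal $\{y: y\leq x\}$ to be finite, which is stronger than local finiteness (local finiteness alone only controls intervals $[y,x]$); this stronger condition is also what justifies the Fubini interchange hidden in the module identity $(g*\zeta)*\mu=g*(\zeta*\mu)$. Neither point is a genuine gap here, since the paper applies the proposition only to finite lattices of set partitions, where both conditions hold trivially.
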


\begin{defi}
  Let $P$ be a poset. A map $c:P\to P$ is called \emph{closure operator} if: 
  \begin{enumerate}
   \item it is \emph{increasing}, i.e., $x\leq c(x)$ for every $x\in P$; 
   \item it is \emph{order preserving}, i.e., if $x\leq y$ then $c(x)\leq c(y)$; 
   \item it is \emph{idempotent}, i.e., $c\circ c=c$.
  \end{enumerate}
\end{defi}
In the present paper
we will be concerned with posets (mostly lattices)
of set partitions  exclusively and make use of the noncrossing closure
and interval closure defined next.
\begin{defi}
  \label{def:partitions}
  \begin{enumerate}[\rm(1)]
  \item A \emph{partition} of a set is a decomposition into disjoint subsets,
   called \emph{blocks}.
   The set of partitions of the set $[n]:=\{1,\ldots,n\}$ is denoted by
   $\SP(n)$. 
   It is a lattice under refinement order with maximal element $\{[n]\}$
   denoted by $\hat{1}_n$ 
   and  minimal element $\{\{1\}, \dots, \{n\}\}$ is denoted by $\hat{0}_n$.  
   
   We write $\SP = \bigcup_{n \geq 1} \SP(n)$ and similar notations will be used such as $\NC$.  
  
  \item Any partition defines an equivalence relation
   on $[n]$ and vice versa. Given  $\pi\in \SP(n)$,
   $i \sim_\pi j$ holds if and only if there is a block $V \in \pi$ 
   such that $i,j \in V$.
  
   \item A partition $\pi\in\SP(n)$ is \emph{noncrossing} if there is no
    quadruple of elements $1\leq i<j<k<l\leq n$ such that $i\sim_\pi k$, $j\sim_\pi l$ and
    $i\not\sim_\pi j$.  The noncrossing partitions of order $n$ form a 
    sub-lattice
    which we denote by $\NC(n)$.
    
   \item For two blocks $V,W$ of a partition, we say $V$ is an
    \emph{inner block} of $W$ or equivalently $V$ nests inside $W$ or $W$ is an \emph{outer block} of
    $V$ if there are $i,j \in W$ such that $i<k<j$ for each $k \in V$. 
    
    \item The \emph{depth} of a block $V$ of a noncrossing partition is the number of blocks (including $V$ itself) which graphically cover the block $V$. The \emph{depth} of a noncrossing partition is the maximal depth among all the blocks. For example $\hat{1}_n$ has depth 1. 
  
   \item A block $V$ of a partition is called an \emph{interval block} if $V$ is of the form $V=\{k,k+1,\ldots, k+l\}$ for $k \geq 1$ and $0 \leq l \leq n-k$. We denote by $\mathit{IB}(n)$ the set of all interval blocks of
    $[n]$.
      
   \item An \emph{interval partition} is a partition $\pi$ for which every block is an interval.
    The set of interval partitions of $[n]$ is denoted by $\mathcal{I}(n)$
    and is a sub-lattice of $\SP(n)$. Sometimes these are called \emph{linear
      partitions} and in fact they are in obvious bijection with
    \emph{compositions} of a number $n$, i.e., sequences of integers $(k_1, k_2, \dots,
    k_r)$ such that $k_i>0$ and $k_1+k_2+\dots+k_r=n$.
   
   \item  The \emph{noncrossing closure} $\bar{\pi}$ of a partition $\pi$ 
    is the smallest noncrossing
    partition which dominates $\pi$.
   
   \item A partition $\pi$ is \emph{connected} if its noncrossing closure
    is equal to the maximal partition $\hat{1}_n$, or,
    equivalently, the diagram of $\pi$ is a
    connected graph.  
    The set of connected partitions is denoted by $\conn\SP(n)$.

   \item 
    The \emph{connected components} of a partition $\pi$ are the 
    connected sub-partitions of $\pi$, i.e., the partitions induced
    on the blocks of the noncrossing  closure $\bar\pi$.

   \item The \emph{interval closure} $\hat{\pi}$ of a partition $\pi$ 
    is the smallest interval
    partition which dominates $\pi$.

   \item A partition $\pi\in\SP(n)$ is \emph{irreducible} 
    if its interval closure
    is equal to the maximal partition $\hat{1}_n$. 
    For a noncrossing partition of $[n]$ this is equivalent to the
    property that $1\sim_\pi n$.   
    Every partition $\pi$ can be ``factored'' into irreducible factors which we
    denote by $\pi =\pi_1\cup \cdots \cup \pi_r$. 
    The factors $\pi_j$ are sub-partitions induced on the blocks
    of the interval closure $\hat\pi$.

    The sets of irreducible partitions and irreducible noncrossing partitions
    are respectively denoted by $\irr\SP(n)$ and $\irr\NC(n)$.  

  \end{enumerate}
\end{defi}
  Different types of partitions are shown in the following figure.

%
%
\begin{center}
  \begin{figure}[htbp]
    \label{fig:GeneralCumulants:Partitions}
    \begin{minipage}{.3\textwidth}
      \begin{center}
        \begin{picture}(80,20.4)(1,0)
          \put(10,0){\line(0,1){20.4}}
          \put(20,0){\line(0,1){8.4}}
          \put(30,0){\line(0,1){14.4}}
          \put(40,0){\line(0,1){8.4}}
          \put(50,0){\line(0,1){8.4}}
          \put(60,0){\line(0,1){14.4}}
          \put(70,0){\line(0,1){20.4}}
          \put(80,0){\line(0,1){8.4}}
          \put(20,8.4){\line(1,0){20}}
          \put(30,14.4){\line(1,0){30}}
          \put(10,20.4){\line(1,0){60}}
          \put(50,8.4){\line(1,0){30}}
        \end{picture}
        
        connected
      \end{center}
    \end{minipage}
%
%
    \begin{minipage}{.3\textwidth}
      \begin{center}
      \begin{picture}(90,20.4)(1,0)
        \put(10,0){\line(0,1){20.4}}
        \put(20,0){\line(0,1){8.4}}
        \put(30,0){\line(0,1){14.4}}
        \put(40,0){\line(0,1){8.4}}
        \put(50,0){\line(0,1){14.4}}
        \put(60,0){\line(0,1){8.4}}
        \put(70,0){\line(0,1){20.4}}
        \put(80,0){\line(0,1){8.4}}
        \put(90,0){\line(0,1){8.4}}
        \put(20,8.4){\line(1,0){20}}
        \put(30,14.4){\line(1,0){20}}
        \put(10,20.4){\line(1,0){60}}
        \put(60,8.4){\line(1,0){30}}
      \end{picture}

        irreducible
      \end{center}
    \end{minipage}
%
%
    \begin{minipage}{.3\textwidth}
      \begin{center}
        \begin{picture}(80,14.4)(1,0)
          \put(10,0){\line(0,1){14.4}}
          \put(20,0){\line(0,1){8.4}}
          \put(30,0){\line(0,1){8.4}}
          \put(40,0){\line(0,1){8.4}}
          \put(50,0){\line(0,1){14.4}}
          \put(60,0){\line(0,1){14.4}}
          \put(70,0){\line(0,1){8.4}}
          \put(80,0){\line(0,1){14.4}}
          \put(20,8.4){\line(1,0){20}}
          \put(10,14.4){\line(1,0){40}}
          \put(70,8.4){\line(1,0){0}}
          \put(60,14.4){\line(1,0){20}}
        \end{picture}
        
        noncrossing
      \end{center}
    \end{minipage}
    \caption{Typical partitions}
  \end{figure}
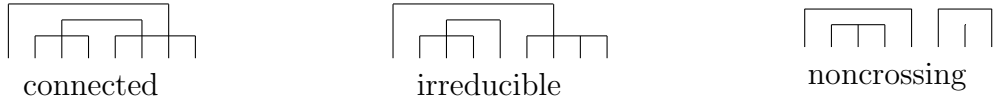
\end{center}


\begin{defi} 
 \begin{enumerate}[\rm(1)]
\item An \emph{ordered partition} is a pair $(\pi,\lambda)$ of a set partition $\pi$
and a linear order $\lambda$ on its blocks. An ordered partition can be regarded as a sequence of blocks: $(\pi,\lambda)=(V_1,\ldots,V_k)$ by understanding that $V_i <_\lambda V_j$ iff  
$i<j$.  
\item A \emph{monotone partition} is an ordered partition $(\pi,\lambda)$ with $\pi\in \NC(n)$ such that, for $V,W \in \pi$,  
$V>_\lambda W$ whenever $V$ is an inner block of $W$.    

\item An ordered partition $(\pi,\lambda)$ is \emph{irreducible} if $\pi$ is irreducible. Let $\irr\MP(n)$ denote the set of irreducible monotone partitions. 
\end{enumerate}
\end{defi}
Positivity of random variables or states is irrelevant in this paper,
our treatment is purely algebraic. One reason for this is the  introduction of
a formal random variable $\widetilde{\mathbf{X}}$ in (\ref{eqt}) whose
positivity is not guaranteed. 
Let $(\mathcal{A},\varphi)$ be a pair of a unital algebra over $\comp$ and a unital linear functional on $\mathcal{A}$, i.e.\ $\varphi(1_{\mathcal{A}})=1$.

Let $A_n$ (resp., $a_n$) be one of the cumulant functionals $B_n$, $H_n$, $R_n$ (resp., $b_n$, $h_n$,
$r_n$). 
Given a partition $\pi\in\SP(n)$ and $X, X_i\in\mathcal{A}$, we define  
the associated multivariate and univariate \emph{partitioned cumulant functionals}
\[
A_\pi(X_1,\ldots,X_n):=\prod_{V\in \pi}A_{\abs{V}}(X_V), 
\qquad
a_\pi(X):= A_\pi(X,\ldots,X) = \prod_{V \in \pi} a_{\abs{V}}(X), 
\]
where we use the notation
$$
A_{\abs{V}}(X_V):= A_m(X_{v_1}, \ldots, X_{v_m})
$$  
for a block $V=\{v_1,\ldots,v_m\}$, $v_1<\cdots<v_m$. 
The linear functional $\varphi$ gives rise to the multilinear functional 
$$
(X_1,\dots,X_n)\mapsto \varphi(X_1\cdots X_n)
$$
on $\mathcal{A}^n$ for each $n$ and $\varphi_\pi$ is defined analogously. 

The following formulas implicitly define the classical, free, Boolean
and monotone cumulants.  
\begin{thm}
\begin{align}
\varphi_\pi(X_1,\dotsm, X_n)&=\sum_{\substack{\sigma \in \SP(n) \\ \sigma \leq
    \pi}}K_{\sigma}(X_1,\ldots,X_n),
&\text{\cite{Schutzenberger:1947:certains,Rota:1984:foundations1}}\label{mcclassical}\\
\varphi_\pi(X_1,\dotsm, X_n)&=\sum_{\substack{\sigma \in \NC(n) \\ \sigma \leq \pi}}R_{\sigma}(X_1,\ldots,X_n), &\text{\cite{S94}}\label{mcfree}\\ 
\varphi_\pi(X_1,\dotsm, X_n)&=\sum_{\substack{\sigma \in \I(n) \\ \sigma \leq \pi}}B_{\sigma}(X_1,\ldots,X_n), &\text{\cite{SW97}}\label{mcboolean}\\
\varphi(X_1\dotsm X_n)&=\sum_{(\sigma,\lambda) \in \MP(n)}\frac{1}{\abs{\sigma}!}H_{\sigma}(X_1,\ldots,X_n). &\text{\cite{HS11b}}\label{mcmonotone}
\end{align}
\end{thm}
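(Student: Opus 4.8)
The four identities fall into two groups according to how I would prove them. The first three, \eqref{mcclassical}, \eqref{mcfree} and \eqref{mcboolean}, are Möbius-inversion statements and I would treat them uniformly. Fix $n$, let $L$ be the relevant lattice --- $\SP(n)$, $\NC(n)$ or $\I(n)$ --- and write $A_\sigma$ for the corresponding partitioned cumulant ($K_\sigma$, $R_\sigma$ or $B_\sigma$). The asserted relation $\varphi_\pi=\sum_{\sigma\le\pi}A_\sigma$, demanded for every $\pi\in L$, is exactly the hypothesis \eqref{eq:fx=sumgy} of the Möbius inversion principle (Proposition~\ref{prop:moebius}) applied on $L$. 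Consequently there is a unique function $\sigma\mapsto A_\sigma$ solving it, given explicitly by $A_\pi=\sum_{\sigma\le\pi}\varphi_\sigma\,\mu_L(\sigma,\pi)$, and this already provides existence and uniqueness of the cumulants.

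The one substantive point is that the function returned by inversion is multiplicative, i.e.\ that it agrees with the extension $A_\sigma=\prod_{V\in\sigma}A_{\abs{V}}(X_V)$ used in the statement, so that it is enough to know $A_n:=A_{\hat{1}_n}$ for every $n$. This follows from the product structure of each lattice: the principal order ideal below $\pi$ factors as $[\hat{0}_n,\pi]\cong\prod_{V\in\pi}[\hat{0}_V,\hat{1}_V]$ inside $\SP$, $\NC$ and $\I$ alike, and under this isomorphism both $\varphi_\pi$ and $\sum_{\sigma\le\pi}A_\sigma$ split into products over the blocks of $\pi$. The whole family of relations thus collapses to the top cases $\pi=\hat{1}_n$, each of which determines $A_n$ recursively since every $\sigma<\hat{1}_n$ contributes only cumulants of order smaller than $n$. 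This is Rota's theory of the partition lattice for \eqref{mcclassical} and its noncrossing and interval analogues for \eqref{mcfree} and \eqref{mcboolean}.

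The monotone identity \eqref{mcmonotone} is the genuine obstacle, as it is not an inversion over a lattice: the sum ranges over monotone partitions $(\sigma,\lambda)$, carries the weight $1/\abs{\sigma}!$, and is asserted only for the full moment $\varphi(X_1\cdots X_n)$. Existence and uniqueness of $H_n$ are still immediate by triangularity, for the term $\sigma=\hat{1}_n$ contributes exactly $H_n$ --- its single block admits one monotone labelling and weight $1$ --- while every remaining term involves only lower-order cumulants, so that $H_n$ is solved recursively. The real content is to identify these $H_n$ with the monotone cumulants, that is, to show the combinatorial sum reproduces the moments prescribed by monotone independence.

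For this I would adopt the dynamical viewpoint of Hasebe--Saigo: run the given law through the one-parameter monotone convolution semigroup $(\nu_t)_{t\ge0}$ and write the evolution equation for its moments, in which the $H_m$ appear as infinitesimal data. Integrating this triangular system of ordinary differential equations from the point mass at the origin at time $0$ up to time $1$ expands $\varphi(X_1\cdots X_n)$ into iterated integrals over the simplices $\{0<t_1<\cdots<t_k<1\}$; the nesting rule of monotone independence forces an inner block to be processed before the blocks enclosing it, so that the admissible orders of integration are precisely the monotone labellings $\lambda$, while each simplex contributes the volume $1/k!=1/\abs{\sigma}!$. The crux, and where I expect the main difficulty, is verifying that this time-ordered expansion produces exactly the noncrossing partitions together with their monotone labellings, with no spurious terms, and that the simplex weight matches $1/\abs{\sigma}!$; equivalently, that summing over the labellings $\lambda$ enumerates the linear extensions of the nesting forest, which is precisely what links \eqref{mcmonotone} to the $\nestforest(\pi)!$ form used in Theorem~\ref{mainthm1}.
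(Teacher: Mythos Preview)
The paper does not prove this theorem at all: the four formulas are presented as the \emph{definitions} of the respective cumulants, with citations to the original sources where they were introduced. Your sketch is therefore not comparable to anything in the paper, but it is a faithful outline of how the cited references proceed. For \eqref{mcclassical}--\eqref{mcboolean} the M\"obius-inversion/multiplicativity argument you describe is exactly the standard one, and for \eqref{mcmonotone} the differential-equation viewpoint you adopt is precisely that of Hasebe--Saigo \cite{HS11b}, whose evolution equation the paper quotes later as~\eqref{eq:monotonediffeq}. One small remark: in the monotone case you speak of ``identifying these $H_n$ with the monotone cumulants'', but in this paper (and in \cite{HS11b}) formula~\eqref{mcmonotone} \emph{is} the definition of $H_n$, so no separate identification step is needed---the substantive work in \cite{HS11b} is rather to show that the recursively defined $H_n$ are additive under monotone independence.
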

The multiplicative extension of the monotone case (\ref{mcmonotone}) is not very useful because the summand would depend on both $\sigma,\pi$ (but if $\pi$ is an interval partition, the summand does not depend on $\sigma$; see the proof of Theorem \ref{mainthm1} in Section \ref{sec:proof1}). 

Let $\mu_{\SP}, \mu_\NC, \mu_{\I}$ be the M\"{o}bius functions on the posets
$\SP,\NC,\I$ respectively. 
The values are
\begin{align}
  \mu_\SP(\hat{0}_n,\hat{1}_n) &= (-1)^{n-1} (n-1)!, 
  &\text{\cite{Schutzenberger:1947:certains,Rota:1984:foundations1}}
  \label{eq:moebius:schutzenberger}
 \\
  \mu_\NC(\hat{0}_n,\hat{1}_n) &= (-1)^{n-1} C_{n-1}, 
  &\text{\cite{Kreweras:1972:partitions}}\\
  \mu_\I(\hat{0}_n,\hat{1}_n) &= (-1)^{n-1}, \label{eq:intervalmoebius}
\end{align}
where $C_n=\frac{1}{n+1}\binom{2 n}{n}$ is the Catalan number.
The values $\mu(\pi,\sigma)$ for general intervals $[\pi,\sigma]$ are products of these
due to the fact that in all lattices considered here any such interval is
isomorphic to a direct product of full lattices of different orders, see
\cite{DoubiletRotaStanley:1972:foundations6,S94}.

In fact it is easy to see that the lattice of interval partitions of order $n$
is antiisomorphic to the lattice of subsets of a set with $n-1$ elements
and formula \eqref{eq:intervalmoebius} is equivalent to the inclusion-exclusion
principle.

From the M\"{o}bius principle we may express the classical, free and Boolean cumulants as 
\begin{align}
K_\pi(X_1,\dotsm, X_n)&=\sum_{\substack{\sigma \in \SP(n) \\ \sigma \leq \pi}}
     \varphi_{\sigma}(X_1,\ldots,X_n)
     \, \mu_\SP(\sigma,\pi), \label{cmclassical} \\
R_\pi(X_1,\dotsm, X_n)&=\sum_{\substack{\sigma \in \NC(n) \\ \sigma \leq \pi}}
     \varphi_{\sigma}(X_1,\ldots,X_n)
     \,\mu_\NC(\sigma,\pi), \label{cmfree}\\
B_\pi(X_1,\dotsm, X_n)&=\sum_{\substack{\sigma \in \I(n) \\ \sigma \leq \pi}}
     \varphi_{\sigma}(X_1,\ldots,X_n)
     \, \mu_\I(\sigma,\pi). \label{cmboolean}
\end{align}

Alternatively, univariate cumulants can be defined via generating functions as follows.
Let $(m_n)_{n\geq 1}$  be a sequence with $m_0=1$ and $F(z) = \sum_{n=0}^\infty
\frac{m_n}{n!}z^n$ its exponential generating function and $M(z) =
\sum_{n=0}^\infty m_nz^n$ the ordinary generating 
function. 
\begin{enumerate}
 \item 
  The exponential generating function of the classical cumulants satisfies the
  identity
  $$
  \sum_{n=1}^\infty \frac{\kappa_n}{n!}\,z^n  = \log F(z). 
  $$
 \item The ordinary generating function of the free cumulants
  $$
  R(z) = \sum_{n=1}^\infty r_n z^n
  $$
  is called \emph{$R$-transform} and satisfies the equivalent identities
  \begin{align}
    \label{eq:1+R(zM)=M}
    1 + R(zM(z)) &=  M(z), \\
    \label{eq:M(z/(1+R))=1+R}
    M(z/(1+R(z))) &= 1 + R(z). 
  \end{align}
 \item The ordinary generating function of the Boolean cumulants
  $$
  B(z) = \sum_{n=1}^\infty b_n z^n
  $$
  satisfies the identity
  \begin{equation}
    \label{eq:M=1/1-B}
  M(z) = \frac{1}{1-B(z)}. 
  \end{equation}
\end{enumerate}

Our proofs make use of both the set partition machinery and multivariate
versions of the following generating function relations.
Consider the following identities: 
\begin{equation}
\label{eq:swapidentities}
1 + R
\left(
  \frac{z}{1-B(z)}
\right)
=\frac{1}{1-B(z)}, 
\qquad  
1- B
\left(
  \frac{z}{1+R(z)}
\right)
=\frac{1}{1+R(z)}. 
\end{equation}
The left hand identity is obtained by substituting \eqref{eq:M=1/1-B} into
\eqref{eq:1+R(zM)=M};
and the right hand identity follows from
\eqref{eq:M(z/(1+R))=1+R} by taking reciprocals on both sides
and substituting $1/M(z) = 1-B(z)$.

Define a map $~\tilde{}~$ on the set of generating functions
by setting $\tilde{R}=-B$,
$\tilde{B}=-R$. This map swaps the identities
\eqref{eq:swapidentities}
and  explains the 
occurrence of the factor $(-1)^{\abs{\pi}-1}$ in  formulas
(\ref{eq:boolean2free:BeNi1}),(\ref{mf}) and (\ref{eq:bm}). 
It turns out that $\tilde{H}=-H$ under this transformation, 
where $H(z)$ is the generating function of monotone cumulants.
The details and the multivariate generalization of this observation
are worked out in Section \ref{sec:proof1}.

\section{Counting monotone partitions}\label{sec:monotonepartition}
We want to count the number of monotone labellings of a noncrossing partition $\pi$, i.e., the number of possible orders $\lambda$ on the blocks of $\pi$ such that $(\pi,\lambda)$ becomes a monotone partition. For this purpose 
it is convenient to map the nesting structure of noncrossing partitions
to trees. 

\begin{defi}
  \label{def:nestingforest}
  The \emph{nesting forest} $\nestforest(\pi)$ 
  of a noncrossing partition $\pi$ with $k$ blocks
  is the forest of planar rooted trees on $k$ vertices
  built recursively as follows.
  \begin{enumerate}
   \item 
    If $\pi$ is an irreducible partition, 
    then $\nestforest(\pi)$ is the planar rooted tree, whose
    vertices are the blocks of $\pi$, the root being the unique outer block,
    and branches $\nestforest(\pi_i)$ where $\pi_i$ are the irreducible components
    of $\pi$ without the outer block.
   \item   
    If $\pi$ has irreducible components $\pi_1,\pi_2,\dots,\pi_k$,
    then $\nestforest(\pi)$ is the forest consisting of the rooted trees
    $\nestforest(\pi_1),\nestforest(\pi_2),\dots,\nestforest(\pi_k)$.
  \end{enumerate}
\end{defi}

\begin{figure}[h]
  \centering
\tikzstyle{every node}=[circle, draw=red, fill=red,
                        inner sep=0pt, minimum width=2pt]
\begin{tikzpicture}[scale=0.07]
  \draw (2,0)--(2,9.95);
  \draw (8,0)--(8,9.95);
  \draw (14,0)--(14,6.65);
  \draw (20,0)--(20,3.35);
  \draw (26,0)--(26,3.35);
  \draw (32,0)--(32,6.65);
  \draw (38,0)--(38,3.35);
  \draw (44,0)--(44,3.35);
  \draw (50,0)--(50,3.35);
  \draw (56,0)--(56,9.95);
  \draw (62,0)--(62,9.95);
  \draw (68,0)--(68,3.35);
  \draw (74,0)--(74,3.35);
  \draw (80,0)--(80,9.95);
  \draw (86,0)--(86,6.65);
  \draw (92,0)--(92,3.35);
  \draw (98,0)--(98,6.65);
  \draw (104,0)--(104,9.95);
  \draw (20,3.35)--(26,3.35);
  \draw (14,6.65)--(32,6.65);
  \draw (38,3.35)--(38,3.35);
  \draw (44,3.35)--(50,3.35);
  \draw (2,9.95)--(56,9.95);
  \draw (68,3.35)--(74,3.35);
  \draw (92,3.35)--(92,3.35);
  \draw (86,6.65)--(98,6.65);
  \draw (62,9.95)--(104,9.95);
    \draw [dotted,color=red] (23,6.65)--(29,9.95);
    \draw [dotted,color=red] (23,3.35)--(23,6.65);
    \draw [dotted,color=red] (38,3.35)--(29,9.95);
    \draw [dotted,color=red] (47,3.35)--(29,9.95);
    \draw [dotted,color=red] (71,3.35)--(83,9.95);
    \draw [dotted,color=red] (92,6.65)--(83,9.95);
       \draw [dotted,color=red] (92,3.35)--(92,6.65);
  \node at (29,9.95)  {};
  \node at (23,6.65) {};
  \node at (23,3.35) {};
  \node at (38,3.35) {};
  \node at (47,3.35) {};
  \node at (83,9.95) {};
  \node at (71,3.35) {};
  \node at (92,6.65) {};
  \node at (92,3.35) {};
\end{tikzpicture}
\qquad{}
\tikzset{
  every text node part/.style = {font=\scriptsize},
  grow=up,
  inner sep=0.2mm,
  vertex/.style={circle,draw},
  emptyvertex/.style={},
  level distance=3ex,
  level 1/.style={sibling distance=4em},
  level 2/.style={sibling distance=2em}
}
\tikzstyle{every node}=[circle, draw=black, fill=black,
                        inner sep=0pt, minimum width=2pt]
\begin{tikzpicture}
  [grow=down,inner sep=0mm]
  \node  (root) {}
      child{ node  {} 
        child {node{}}
      }
      child{ node  {} }
      child{ node  {} };
\end{tikzpicture}    
\begin{tikzpicture}
  [grow=down,inner sep=0mm]
    \node  (root) {}
      child{ node {} }
      child{ node {}
        child{node {}}
    };
\end{tikzpicture}

  \caption{A noncrossing partition and its nesting forest}
  \label{fig:nestingtree}
\end{figure}
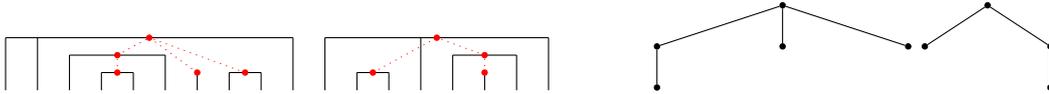


Every monotone labelling of the noncrossing partition $\pi$
corresponds to an increasing labelling of its nesting forest $\tau(\pi)$.
For the enumeration of the latter the so-called \emph{tree factorial} is useful.

\begin{defi}
  The \emph{tree factorial} $t!$ of a finite rooted tree $t$ is recursively defined as follows.
  Let $t$ be a rooted tree with $n>0$ vertices.
  If $t$ consists of a single vertex, set $t!=1$.
  Otherwise $t$ can be decomposed into its root vertex
  and branches $t_1$, $t_2$, \ldots, $t_r$
  and we define recursively  the number
  $$
  t! = n\cdot t_1!\, t_2!\,\dotsm t_r!. 
  $$
  The tree factorial of a forest is the product of the factorials
  of the constituting trees.
\end{defi}

\begin{prop}
  \label{prop:mpi=nestforest}
  \begin{enumerate}[(a)]
   \item 
    The number $m(\pi)$ of monotone labellings of a noncrossing partition $\pi$
    depends only on its nesting forest $\tau(\pi)$ and is given by
    $$
    m(\pi)
    =
    \frac{\abs{\pi}!}{\nestforest(\pi)!}. 
    $$
   \item\label{2A}
    The function
    $w(\pi)= \frac{m(\pi)}{\abs{\pi}!}=\frac{1}{\nestforest(\pi)!}$
    is multiplicative, i.e., if $\pi$ has irreducible
    components $\pi_1,\pi_2,\dots,\pi_k$, then
    $$
    w(\pi) = w(\pi_1) \, w(\pi_2) \dotsm w(\pi_k). 
    $$
  \end{enumerate}
\end{prop}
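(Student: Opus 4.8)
The plan is to reduce the enumeration of monotone labellings to the classical problem of counting increasing labellings of a rooted forest, and then to apply the tree factorial (hook-length) formula. A monotone labelling of $\pi$ is a bijection $\lambda$ from the $\abs{\pi}$ blocks onto $\{1,\ldots,\abs{\pi}\}$ subject to the constraint that $V$ receives a larger label than $W$ whenever $V$ is an inner block of $W$. First I would make precise that, in the nesting forest $\nestforest(\pi)$, the parent--child relation is exactly immediate nesting, so that by taking the transitive closure the ancestor--descendant relation is exactly the full nesting relation ``$V$ is an inner block of $W$''; this is immediate from the recursive Definition~\ref{def:nestingforest}. Moreover, blocks lying in distinct irreducible components occupy disjoint intervals of $[n]$ and hence never nest, so no constraint links different trees of the forest. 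Consequently the monotone condition is equivalent to requiring that every vertex receives a strictly smaller label than all of its descendants, i.e.\ that $\lambda$ is an \emph{increasing labelling} of $\nestforest(\pi)$. This shows $m(\pi)$ equals the number of increasing labellings of $\nestforest(\pi)$, a quantity depending only on $\nestforest(\pi)$, which already yields the first assertion of (a).

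Next I would establish the counting formula. For a single rooted tree $t$ on $n$ vertices with branches $t_1,\ldots,t_r$ of sizes $n_1,\ldots,n_r$, the root is forced to carry the label $1$; the remaining labels $\{2,\ldots,n\}$ are distributed among the branches, and within each branch they must again form an increasing labelling. This gives the recursion
\[
(\text{increasing labellings of } t) = \binom{n-1}{n_1,\ldots,n_r}\prod_{i=1}^r(\text{increasing labellings of } t_i),
\]
and by induction on the number of vertices, together with the defining recursion $t! = n\cdot t_1!\cdots t_r!$, the number of increasing labellings of $t$ equals $n!/t!$. For a forest $\nestforest$ consisting of trees $t_1,\ldots,t_m$ with $k$ vertices in total, a shuffle argument (choosing which labels go to which tree) produces the multinomial count $\frac{k!}{\prod_i t_i!}=\frac{k!}{\nestforest!}$. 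Combined with the identification of the previous paragraph this proves $m(\pi)=\abs{\pi}!/\nestforest(\pi)!$.

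For part (b) I would invoke the recursive definition of the nesting forest once more: if $\pi$ has irreducible components $\pi_1,\ldots,\pi_k$, then $\nestforest(\pi)$ is the disjoint union of the forests $\nestforest(\pi_j)$. Since the forest factorial is by definition the product of the factorials of its constituent trees, one gets $\nestforest(\pi)! = \prod_{j=1}^k \nestforest(\pi_j)!$, whence $w(\pi)=1/\nestforest(\pi)! = \prod_{j=1}^k w(\pi_j)$, as claimed.

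I expect the main obstacle to be the first step rather than the enumeration: one must verify carefully that the purely combinatorial nesting order on blocks coincides with the ancestor order of $\nestforest(\pi)$ — in particular the passage from covering nestings to arbitrary nestings via transitivity, and the fact that blocks in different irreducible components impose no mutual constraint. Once this identification is secured, the remaining counting is the standard tree factorial computation and the multiplicativity in (b) is a formal consequence of the forest decomposition.
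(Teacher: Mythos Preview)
Your proposal is correct and follows essentially the same approach as the paper: both reduce to counting increasing labellings of the nesting forest and establish the formula $\abs{t}!/t!$ by the standard induction (root gets label~$1$, distribute the rest via a multinomial, recurse on branches). If anything, you are more careful than the paper in justifying the identification between monotone labellings of $\pi$ and increasing labellings of $\nestforest(\pi)$ and in treating the forest case explicitly; the paper states this correspondence without argument and handles only the tree case in detail, leaving (b) as ``immediate from the definition.''
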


\begin{proof}
  \begin{enumerate}[(a)]
   \item 
  Here is an inductive proof of the  well known fact
  that the number of increasing labellings of a tree $t$
  is   equal to $\abs{t}!/t!$.
  
  If the tree has only one vertex the claim is obviously true.
  So assume that there are at least $2$ vertices.
  The root must get the smallest label, so there is no choice.
  Then we have to distribute the remaining labels among
  the branches $t_1$, $t_2$, \ldots{},$t_r$. There are 
  $$
  \binom{\abs{t}-1}{\abs{t_1},\abs{t_2},\dots,\abs{t_r}}
  $$
  possibilities to do so and by induction on each branch $t_i$ there are
  $\frac{\abs{t_i}!}{t_i!}$ monotone labellings.
  Putting these together we obtain 
  $$
  \binom{\abs{t}-1}{\abs{t_1},\abs{t_2},\dots,\abs{t_r}}
  \frac{\abs{t_1}!}{t_1!}
  \frac{\abs{t_2}!}{t_2!}
  \dotsm
  \frac{\abs{t_r}!}{t_r!}
  = \frac{1}{\abs{t} } \frac{\abs{t}!}{t_1!t_2!\dotsm t_r!} = \frac{\abs{t}!}{t!}. 
  $$
 \item is immediate from the definition of the tree factorial.
  \end{enumerate}
\end{proof}
From this we can rewrite the formula expressing moments in terms of monotone cumulants (\ref{mcmonotone}) into 
\begin{equation}
\varphi(X_1\cdots X_n)=\sum_{\sigma \in\NC(n)} \frac{1}{\tau(\sigma)!} H_\sigma (X_1,\dots,X_n). 
\end{equation}

We can count the number of monotone partitions as follows. 
Let $\mathit{IB}(n, k)$ be a subset of $\mathit{IB}(n)$ defined by $\{V \in \mathit{IB}(n); \abs{V}=k \}$.  We notice that $\abs{\mathit{IB}(n,k)} = n-k
+1$.  \begin{prop}
$\abs{\mathcal{M}(n)} = \frac{(n+1)!}{2}$. 
\end{prop}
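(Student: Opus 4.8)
The plan is to set up a recursion for $a_n:=\abs{\mathcal{M}(n)}$ (with $a_0:=1$ for the empty partition) by peeling off the last block in the monotone order, and then to solve the recursion by a one-line telescoping argument.

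First I would analyze the structure of the $\lambda$-maximal block. Let $(\pi,\lambda)\in\mathcal{M}(n)$ and let $V$ be the largest block with respect to $\lambda$. Since inner blocks must follow their outer blocks in a monotone order, $V$ can have no block nested inside it; by the noncrossing property this forces $V$ to be an interval block $V=\{a,a+1,\dots,b\}$ (if some gap $v_{i+1}>v_i+1$ occurred, the block of an intermediate point would be nested inside $V$). Deleting $V$ and relabelling the remaining $m:=n-(b-a+1)$ points in an order-preserving way produces a partition $\rho=(\pi',\lambda')$; removing an interval block preserves noncrossingness, and since $V$ was innermost the nesting relations among the surviving blocks are unchanged, so $\rho\in\mathcal{M}(m)$. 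I would record in addition the position $g:=a-1\in\{0,1,\dots,m\}$ of the gap from which $V$ was removed.

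The heart of the argument is that the map $(\pi,\lambda)\mapsto(\rho,g,s)$, with $s:=b-a+1$, is a bijection from $\mathcal{M}(n)$ onto the set of triples consisting of a monotone partition $\rho\in\mathcal{M}(m)$, a gap $g\in\{0,\dots,m\}$, and a size $s\geq1$ with $m+s=n$. The inverse inserts a fresh interval block of $s$ consecutive new points into $\rho$ at the $g$-th gap and declares it to be $\lambda$-maximal: the new block is automatically innermost, placing it last respects monotonicity because every other block is already present, and inserting a contiguous run into a gap keeps the partition noncrossing. The key point, and the main thing to verify carefully, is that the number of admissible insertion positions equals the number of gaps of $[m]$, namely $m+1$, independently of the inserted size $s$. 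Granting this, counting triples gives
\begin{equation*}
a_n=\sum_{s=1}^{n}(n-s+1)\,a_{n-s}=\sum_{m=0}^{n-1}(m+1)\,a_m,\qquad n\geq1.
\end{equation*}

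Finally I would solve the recursion. For $n\geq2$, subtracting the identity for $a_{n-1}$ from that for $a_n$ makes all but the top term cancel, leaving $a_n-a_{n-1}=n\,a_{n-1}$, i.e.\ $a_n=(n+1)\,a_{n-1}$. Together with $a_1=a_0=1=\frac{2!}{2}$, an immediate induction yields $a_n=\frac{(n+1)!}{2}$. I expect the only delicate step to be the verification that deletion and insertion of the innermost interval block are mutually inverse and that the count of gaps is exactly $m+1$ regardless of $s$; once this bijection is pinned down, the remaining computation is routine.
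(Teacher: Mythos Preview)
Your proof is correct and follows essentially the same route as the paper: peel off the $\lambda$-maximal block (necessarily an interval), obtain the recursion $a_n=\sum_{m}(m+1)a_m$, and telescope to $a_n=(n+1)a_{n-1}$. The only cosmetic difference is that the paper parametrizes the removed block directly as an element of $\mathit{IB}(n,n-k)$ (of which there are $k+1$), treating the case $\pi=\hat{1}_n$ separately, whereas you relabel and record a gap position together with the convention $a_0=1$; these are the same bijection in different coordinates.
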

\begin{proof}
There is a bijection $\Phi: \mathcal{M}(n) \to \Big(\bigcup_{k = 1} ^{n-1}
\mathcal{M}(k)\times \mathit{IB}(n, n-k)\Big) \cup \{\hat{1}_n \}$ defined by
\[
\Phi: (V_1,  \ldots, V_{\abs{\pi}}) \mapsto ((V_1, \ldots, V_{\abs{\pi} - 1}), V_{\abs{\pi}}). 
\] 
Now let $a_n : = \abs{\mathcal{M}(n)}$. In view of the above bijection $\Phi$, we have $a_n = 1 + \sum_{k=1} ^{n-1}  (k + 1)a_k$. 
By calculating $a_n-a_{n-1}$, one gets the relation $a_n = (n+1) a_{n-1}$ and so the result follows.
\end{proof}

\section{Symmetries of generating functions and proof of Theorem \ref{mainthm1}}\label{sec:proof1}

\begin{proof}[Proof of Theorem \eqref{mb}] The proof follows the same line as the proof of
(\ref{eq:free2boolean}) in \cite{L02} by a simple application
of the principle of M\"obius inversion (Prop.~\ref{prop:moebius}).

We know that the Boolean cumulants are uniquely determined by the property that 
$$
\varphi_\pi(X_1,X_2,\dots,X_n) = \sum_{\substack{\rho\in\I(n)\\
\rho\leq \pi}} B_\rho(X_1,X_2,\dots,X_n)
$$
for every $\pi\in \I(n)$. We define
$$
\hat{B}_n = \sum_{\pi\in \irr\MP(n)} \frac{1}{\abs{\pi}!}H_\pi
= \sum_{\pi\in \irr\NC(n)} \frac{1}{\nestforest(\pi)!}H_\pi
$$
and show that 
$$
\varphi_\pi(X_1,X_2,\dots,X_n) = \sum_{\substack{\rho\in\I(n)\\
\rho\leq \pi}} \hat{B}_\rho(X_1,X_2,\dots,X_n), 
$$
which then implies $\hat{B}_\pi=B_\pi$ for all $\pi \in\I(n)$ by the M\"obius principle. 
 
First note that multiplicativity of the nesting tree factorial
(Proposition \ref{prop:mpi=nestforest}(\ref{2A})) implies that
$$
\hat{B}_\rho
= \sum_{\substack{\pi\in \MP(n) \\ \hat{\pi}=\rho}} \frac{1}{\abs{\pi}!}H_\pi
= \sum_{\substack{\pi\in \NC(n)\\ \hat{\pi}=\rho}} \frac{1}{\nestforest(\pi)!}H_\pi. 
$$
any interval partition $\rho$.
Moreover given an interval partition $\pi=\{V_1,V_2,\dots,V_p\}$
we have
\begin{align*}
\varphi_\pi&(X_1,X_2,\dots,X_n) \\
&= \sum_{\sigma_1\in \MP(V_1)}
   \sum_{\sigma_2\in \MP(V_2)}
   \dotsm
   \sum_{\sigma_p\in \MP(V_p)}
   \frac{1}{\abs{\sigma_1}!} H_{\sigma_1}(X_{V_1})
   \frac{1}{\abs{\sigma_2}!} H_{\sigma_2}(X_{V_2})
   \dotsm
   \frac{1}{\abs{\sigma_p}!} H_{\sigma_p}(X_{V_p})
\\
&= \sum_{\sigma_1\in \NC(V_1)}
   \sum_{\sigma_2\in \NC(V_2)}
   \dotsm
   \sum_{\sigma_p\in \NC(V_p)}
   \frac{1}{\nestforest(\sigma_1)!} H_{\sigma_1}(X_{V_1})
   \frac{1}{\nestforest(\sigma_2)!} H_{\sigma_2}(X_{V_2})
   \dotsm
   \frac{1}{\nestforest(\sigma_p)!} H_{\sigma_p}(X_{V_p})
\\ 
&= \sum_{\substack{\sigma\in \NC(n)\\ \sigma\leq \pi}}
   \frac{1}{\nestforest(\sigma)!} H_{\sigma}(X_1,X_2,\dots,X_n)
\\ 
&= \sum_{\substack{\rho\in \I(n)\\ \rho\leq \pi }}
   \sum_{\substack{\sigma\in \NC(n)\\ \hat{\sigma}=\rho } }
   \frac{1}{\nestforest(\sigma)!} H_{\sigma}(X_1,X_2,\dots,X_n)
\\ 
&= \sum_{\substack{\rho\in \I(n)\\ \rho\leq \pi}}
   \hat{B}_\rho(X_1,X_2,\dots,X_n), 
\end{align*}
where the multiplicativity of the nesting tree factorial $\tau(\sigma)!$ was
used again for the third equality.

\end{proof}

We are going to show (\ref{mf}) and (\ref{eq:boolean2free:BeNi1}) by using generating functions. The latter was shown in \cite{BN08}, but our proof is different. 
Let $\cseries$
be the ring of formal power series on $r$ free indeterminates $z_1,\ldots,z_r$. Let $\mathbf{z}$ denote the vector $(z_1,\ldots, z_r)$. 
For a vector of noncommutative elements $\mathbf{X}=(X_1, \ldots, X_r)$, we introduce generating functions: 
\begin{align}
M_{\mathbf{X}} (\mathbf{z}) &:= 1 + \sum_{n=1}^\infty \sum_{i_1, \ldots, i_n = 1} ^r \varphi(X_{i_1}\dotsm X_{i_n})z_{i_1}\dotsm z_{i_n}, \\
B_{\mathbf{X}} (\mathbf{z}) &:= \sum_{n=1}^\infty \sum_{i_1, \ldots, i_n = 1} ^r B_n(X_{i_1},\ldots, X_{i_n})z_{i_1}\dotsm z_{i_n}, \\
R_{\mathbf{X}} (\mathbf{z}) &:= \sum_{n=1}^\infty \sum_{i_1, \ldots, i_n = 1} ^r R_n(X_{i_1},\ldots, X_{i_n})z_{i_1}\dotsm z_{i_n}, \\
H_{\mathbf{X}} (\mathbf{z}) &:= \sum_{n=1}^\infty \sum_{i_1, \ldots, i_n = 1} ^r H_n(X_{i_1},\ldots, X_{i_n})z_{i_1}\dotsm z_{i_n}. 
\end{align}
We also introduce vectors of generating functions: 
\begin{align}
\mathfrak{M}_{\mathbf{X}}(\mathbf{z})&:= \mathbf{z}M_{\mathbf{X}}(\mathbf{z})=(z_1M_{\mathbf{X}}(\mathbf{z}),\ldots,z_r M_{\mathbf{X}}(\mathbf{z})),\\ 
\mathfrak{H}_{\mathbf{X}}(\mathbf{z})&:= \mathbf{z}H_{\mathbf{X}}(\mathbf{z})=(z_1H_{\mathbf{X}}(\mathbf{z}),\ldots,z_r H_{\mathbf{X}}(\mathbf{z})). 
\end{align}

\begin{lem} The following identities hold.  
\begin{align}
B_{\mathbf{X}}(\mathbf{z}) M_{\mathbf{X}}(\mathbf{z}) &=M_{\mathbf{X}}(\mathbf{z})-1, \label{B}\\
R_{\mathbf{X}} (\mathfrak{M}_{\mathbf{X}}(\mathbf{z}))&= M_{\mathbf{X}}(\mathbf{z})-1. \label{eq:F}
\end{align}
\end{lem}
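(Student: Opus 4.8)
The plan is to obtain both identities directly from the defining moment--cumulant relations \eqref{mcboolean} and \eqref{mcfree}, using in each case the standard recursion that peels off the block containing the first letter. Throughout one must keep in mind that the indeterminates $z_i$ do not commute, so that in $M_{\mathbf{X}}(\mathbf{z})$ the monomial $z_{i_1}\cdots z_{i_n}$ faithfully records the order of the letters in $X_{i_1}\cdots X_{i_n}$; tracking this order is exactly what makes the substitution in \eqref{eq:F} work out.

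For \eqref{B} I would begin from \eqref{mcboolean} at $\pi=\hat{1}_n$, which reads $\varphi(X_{i_1}\cdots X_{i_n})=\sum_{\sigma\in\I(n)}B_\sigma(X_{i_1},\ldots,X_{i_n})$. Since the first block of an interval partition is necessarily of the form $\{1,\ldots,k\}$ and its removal leaves an interval partition of $\{k+1,\ldots,n\}$, applying \eqref{mcboolean} to the tail yields the recursion
\[
\varphi(X_{i_1}\cdots X_{i_n})=\sum_{k=1}^{n}B_k(X_{i_1},\ldots,X_{i_k})\,\varphi(X_{i_{k+1}}\cdots X_{i_n}),
\]
with the empty moment read as $1$. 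Multiplying by $z_{i_1}\cdots z_{i_n}$ and summing over $n\ge 1$ and all color sequences turns the left side into $M_{\mathbf{X}}(\mathbf{z})-1$; on the right the leading $k$ letters assemble into $B_{\mathbf{X}}(\mathbf{z})$ and the trailing letters (including the empty tail, contributing $1$) into $M_{\mathbf{X}}(\mathbf{z})$. Because the block letters precede the tail letters in the word, noncommutativity is harmless and one obtains $B_{\mathbf{X}}(\mathbf{z})\,M_{\mathbf{X}}(\mathbf{z})$, proving \eqref{B}.

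For \eqref{eq:F} I would run the same scheme on \eqref{mcfree} at $\pi=\hat{1}_n$, decomposing $\sigma\in\NC(n)$ by the block $V=\{1=w_1<w_2<\cdots<w_s\}$ containing the first point. The noncrossing condition forces every other block to lie inside a single gap $\{w_j+1,\ldots,w_{j+1}-1\}$ (with $w_{s+1}:=n+1$), so the gaps are partitioned independently and the sum factorizes into $R_s(X_{i_{w_1}},\ldots,X_{i_{w_s}})$ times a product of moments of the gap subwords, each obtained by reapplying \eqref{mcfree}. Multiplying by $z_{i_1}\cdots z_{i_n}$ in word order inserts, immediately after each block variable $z_{i_{w_j}}$, the resummed gap factor $M_{\mathbf{X}}(\mathbf{z})$, so the contribution of a block of colors $(a_1,\ldots,a_s)$ is
\[
R_s(X_{a_1},\ldots,X_{a_s})\,\bigl(z_{a_1}M_{\mathbf{X}}(\mathbf{z})\bigr)\bigl(z_{a_2}M_{\mathbf{X}}(\mathbf{z})\bigr)\cdots\bigl(z_{a_s}M_{\mathbf{X}}(\mathbf{z})\bigr).
\]
Recognizing $z_a M_{\mathbf{X}}(\mathbf{z})$ as the $a$-th component of $\mathfrak{M}_{\mathbf{X}}(\mathbf{z})=\mathbf{z}M_{\mathbf{X}}(\mathbf{z})$ identifies the total with $R_{\mathbf{X}}(\mathfrak{M}_{\mathbf{X}}(\mathbf{z}))$, while the left side is again $M_{\mathbf{X}}(\mathbf{z})-1$, giving \eqref{eq:F}.

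The step I expect to be the main obstacle is the bookkeeping in the free case: one has to verify that the $s$ gaps occupy exactly the $s$ slots following the block letters (the last gap being the tail after $w_s$), and that in the noncommutative series the resummed $M$-factors land precisely in those slots, so that the product telescopes into the composition $\mathbf{z}\mapsto\mathbf{z}M_{\mathbf{X}}(\mathbf{z})$. The independence and resummation of the gaps rests on the fact that no block of a noncrossing partition can straddle two gaps of $V$ without creating a crossing, which is the only genuinely combinatorial point that needs checking.
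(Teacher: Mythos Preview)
Your proof is correct. For \eqref{B} your argument is essentially the same as the paper's, just run in the opposite direction: the paper expands the product $B_{\mathbf{X}}(\mathbf{z})M_{\mathbf{X}}(\mathbf{z})$ and identifies the coefficient of $z_{k_1}\cdots z_{k_n}$ as $\sum_{p=1}^{n} B_p(X_{k_1},\ldots,X_{k_p})\,\varphi(X_{k_{p+1}}\cdots X_{k_n})$, then invokes the Boolean moment--cumulant formula to match it with $\varphi(X_{k_1}\cdots X_{k_n})$; you start from the recursion and assemble the product. For \eqref{eq:F} the paper does not actually give an argument but simply cites \cite[Lecture~16]{NS06}; your sketch is precisely the standard proof found there, and the bookkeeping you flag as the potential obstacle (the $s$ gaps slotting in behind the $s$ block letters so that the resummed $M$-factors produce the substitution $\mathbf{z}\mapsto \mathbf{z}M_{\mathbf{X}}(\mathbf{z})$) is handled correctly.
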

\begin{proof}
The first identity is a straightforward consequence of the Boolean moment-cumulant formula. One has 
\begin{multline*}
B_{\mathbf{X}}(\mathbf{z})M_{\mathbf{X}}(\mathbf{z})\\
=\left( \sum_{p=1}^\infty \sum_{i_1, \ldots, i_p = 1} ^r B_p(X_{i_1},\ldots, X_{i_p})z_{i_1}\dotsm z_{i_p}\right)
\left(1+\sum_{q=1}^\infty \sum_{j_1, \ldots, j_q = 1} ^r \varphi(X_{j_1}\dotsm X_{j_q})z_{j_1}\dotsm z_{j_q}\right). 
\end{multline*}
On the right hand side, the coefficient of the term $z_{k_1}\dotsm z_{k_n}$ is equal to 
$$
B_n(X_{k_1},\ldots,X_{k_n})
 +\sum_{p=1}^{n-1} B_p(X_{k_1},\ldots,X_{k_p})\,
                 \varphi(X_{k_{p+1}}\dotsm X_{k_n}), 
$$
which, by virtue of the Boolean moment-cumulant formula, equals 
\begin{multline*}
B_n(X_{k_1},\ldots,X_{k_n})+\sum_{p=1}^{n-1}\sum_{\pi\in\I(n-p)}
B_p(X_{k_1},\ldots,X_{k_p}) B_\pi(X_{k_{p+1}},\ldots,X_{k_n})\\
\begin{aligned}
&= B_n(X_{k_1},\ldots,X_{k_n})+\sum_{\sigma\in\I(n),\sigma\neq\hat{1}_n} B_\sigma(X_{k_{1}},\ldots,X_{k_n})\\
&=\varphi(X_{k_1}\dotsm X_{k_n}). 
\end{aligned}
\end{multline*}
Formula \eqref{eq:F} is the fundamental identity defining the multivariate
$R$-transform, see \cite[Lecture~16]{NS06}. 
\end{proof}
For $t \in \real$ and $\mathbf{X}=(X_1,\ldots,X_r)$, let $\mathbf{X}(t)=(X_1(t), \ldots,X_r(t))$  be a vector whose joint distribution is characterized by 
\begin{equation}\label{process} 
H_n(X_{i_1}(t), \ldots, X_{i_n}(t))= t H_n(X_{i_1}, \ldots, X_{i_n}),\qquad i_1,\ldots,i_n \in[r],~ n\geq1, 
\end{equation}
or equivalently 
$$
\varphi(X_{i_1}(t)\dotsm X_{i_n}(t))
= \sum_{\pi \in \mathcal{M}(n)} \frac{t^{\abs{\pi}}}{\abs{\pi}!} H_\pi(X_{i_1}, \ldots, X_{i_n}). 
$$
Let us denote 
\begin{align*}
M_{\mathbf{X}}(t,\mathbf{z})&:=M_{\mathbf{X}(t)}(\mathbf{z}), \\
\mathfrak{M}_{\mathbf{X}}(t, \mathbf{z})&:= \mathbf{z}M_{\mathbf{X}}(t,\mathbf{z})= (z_1 M_{\mathbf{X}}(t,\mathbf{z}),\ldots,z_r M_{\mathbf{X}}(t,\mathbf{z})).
\end{align*} Then the following differential equation holds as formal power series \cite{HS11b}: 
\begin{equation}
\label{eq:monotonediffeq}
\frac{d}{dt}\mathfrak{M}_{\mathbf{X}}(t,\mathbf{z})
= \mathfrak{H}_{\mathbf{X}}(\mathfrak{M}_{\mathbf{X}}(t, \mathbf{z})),\qquad \mathfrak{M}_{\mathbf{X}}(0,\mathbf{z})=\mathbf{z}.
\end{equation}
Moreover, $(\mathfrak{M}_{\mathbf{X}}(t,\cdot))_{t\in\real}$ becomes a flow on $\cseries$:
\begin{equation}\label{group}
\mathfrak{M}_{\mathbf{X}}(t+s,\mathbf{z})=\mathfrak{M}_{\mathbf{X}}(t,\mathfrak{M}_{\mathbf{X}}(s,\mathbf{z})),\qquad t,s\in\real, 
\end{equation}
 which is proved by standard techniques from ordinary differential equations using the uniqueness of the solution in $\cseries$. 

\begin{defi}
For a vector $\mathbf{X}=(X_1,\ldots,X_r) \in\mathcal{A}^r$, let $\widetilde{\mathbf{X}}=(\widetilde{X}_1,\ldots, \widetilde{X}_r)$ be a vector satisfying the relation
\begin{equation}\label{eqt}
R_n(\widetilde{X}_{i_1},\ldots, \widetilde{X}_{i_n}) = -B_n(X_{i_1}, \ldots, X_{i_n}) 
\end{equation}
for any tuple $(i_1, \ldots,i_n)\in[r]^n$. 
\end{defi}

\begin{lem} \label{lem1}
The following relations hold for any $\mathbf{X}$ and any tuple $(i_1, \ldots,i_n)$: 
\begin{enumerate}[\rm(1)]
\item $B_n(\widetilde{X}_{i_1},\ldots, \widetilde{X}_{i_n}) = -R_n(X_{i_1}, \ldots, X_{i_n})$, or equivalently $B_{\widetilde{\mathbf{X}}}(\mathbf{z})=- R_{\mathbf{X}}(\mathbf{z})$;  
\item $H_n(\widetilde{X}_{i_1},\ldots, \widetilde{X}_{i_n}) = -H_n(X_{i_1}, \ldots, X_{i_n})$, or equivalently $H_{\widetilde{\mathbf{X}}}(\mathbf{z})= -H_{\mathbf{X}}(\mathbf{z})$. 
\end{enumerate}
\end{lem}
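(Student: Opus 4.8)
The plan is to work entirely inside the noncommutative power series ring $\cseries$, using the moment--cumulant identities \eqref{B} and \eqref{eq:F} together with the monotone flow \eqref{eq:monotonediffeq}--\eqref{group}, and to treat the defining relation $R_{\widetilde{\mathbf{X}}}=-B_{\mathbf{X}}$ as a \emph{time reversal} of that flow. Throughout one must keep the scalar-valued series $M_{\mathbf{X}}$, $1+R_{\mathbf{X}}$, etc.\ on the correct side when inverting.

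For part (1) I would first record the two multivariate ``swap'' identities, obtained from \eqref{B} and \eqref{eq:F} exactly as the univariate ones after \eqref{eq:swapidentities}. Writing \eqref{B} as $M_{\mathbf{X}}(\mathbf{z})=(1-B_{\mathbf{X}}(\mathbf{z}))^{-1}$ and substituting into \eqref{eq:F} gives
\[
1+R_{\mathbf{X}}\bigl(\mathbf{z}(1-B_{\mathbf{X}}(\mathbf{z}))^{-1}\bigr)=(1-B_{\mathbf{X}}(\mathbf{z}))^{-1},
\]
while inverting the formal substitution $\mathbf{z}\mapsto\mathbf{z}M_{\mathbf{X}}(\mathbf{z})$ in \eqref{eq:F} (its linear part is the identity, hence it is invertible in $\cseries$) and using $B_{\mathbf{X}}=1-M_{\mathbf{X}}^{-1}$ yields the companion identity
\[
B_{\mathbf{X}}\bigl(\mathbf{z}(1+R_{\mathbf{X}}(\mathbf{z}))^{-1}\bigr)=1-(1+R_{\mathbf{X}}(\mathbf{z}))^{-1}.
\]
Now apply \eqref{eq:F} to $\widetilde{\mathbf{X}}$ and insert $R_{\widetilde{\mathbf{X}}}=-B_{\mathbf{X}}$: the moment series $\tilde{M}:=M_{\widetilde{\mathbf{X}}}$ satisfies $B_{\mathbf{X}}(\mathbf{z}\tilde{M})=1-\tilde{M}$. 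Comparing with the companion identity shows that $\tilde{M}=(1+R_{\mathbf{X}})^{-1}$ is a solution, and since this equation determines $\tilde{M}$ coefficient by coefficient (the degree-$n$ term of $B_{\mathbf{X}}(\mathbf{z}\tilde{M})$ involves only lower-order coefficients of $\tilde{M}$, and $\tilde{M}=1+O(\mathbf{z})$), it is the unique solution. Hence $M_{\widetilde{\mathbf{X}}}=(1+R_{\mathbf{X}})^{-1}$, and \eqref{B} gives $B_{\widetilde{\mathbf{X}}}=1-M_{\widetilde{\mathbf{X}}}^{-1}=-R_{\mathbf{X}}$, which is (1).

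For part (2) the idea is to identify $\widetilde{\mathbf{X}}$ with the flow at time $-1$. By part (1), $\mathfrak{M}_{\widetilde{\mathbf{X}}}(\mathbf{z})=\mathbf{z}M_{\widetilde{\mathbf{X}}}(\mathbf{z})=\mathbf{z}(1+R_{\mathbf{X}}(\mathbf{z}))^{-1}$. Setting $\mathbf{w}=\mathfrak{M}_{\mathbf{X}}(\mathbf{z})=\mathbf{z}M_{\mathbf{X}}(\mathbf{z})$ and using \eqref{eq:F} in the form $1+R_{\mathbf{X}}(\mathbf{w})=M_{\mathbf{X}}(\mathbf{z})$, a one-line computation gives
\[
\mathfrak{M}_{\widetilde{\mathbf{X}}}(\mathbf{w})=\mathbf{w}(1+R_{\mathbf{X}}(\mathbf{w}))^{-1}=\mathbf{z}M_{\mathbf{X}}(\mathbf{z})M_{\mathbf{X}}(\mathbf{z})^{-1}=\mathbf{z},
\]
so $\mathfrak{M}_{\widetilde{\mathbf{X}}}\circ\mathfrak{M}_{\mathbf{X}}=\mathrm{id}$ as formal maps. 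Then I invoke the flow property \eqref{group}: with $t=-1$, $s=1$ (and $\mathbf{X}(1)=\mathbf{X}$) it gives $\mathfrak{M}_{\mathbf{X}}(-1,\cdot)=(\mathfrak{M}_{\mathbf{X}}(1,\cdot))^{-1}=\mathfrak{M}_{\mathbf{X}}^{-1}=\mathfrak{M}_{\widetilde{\mathbf{X}}}$. Since $\mathfrak{M}_{\mathbf{X}}(-1,\mathbf{z})=\mathbf{z}M_{\mathbf{X}(-1)}(\mathbf{z})$, this forces $M_{\widetilde{\mathbf{X}}}=M_{\mathbf{X}(-1)}$; that is, $\widetilde{\mathbf{X}}$ and $\mathbf{X}(-1)$ have identical joint moments. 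As the monotone cumulants are a function of the moments alone via \eqref{mcmonotone}, equal moments give equal monotone cumulants, and by the defining property \eqref{process} of $\mathbf{X}(t)$ at $t=-1$ we conclude $H_{\widetilde{\mathbf{X}}}=H_{\mathbf{X}(-1)}=-H_{\mathbf{X}}$, which is (2).

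The main obstacle is not any single step but the bookkeeping forced by the noncommutativity of $\cseries$: one must consistently place $M_{\mathbf{X}}$ and $1+R_{\mathbf{X}}$ on the correct side when inverting, and must justify that $\mathbf{z}\mapsto\mathbf{z}M_{\mathbf{X}}(\mathbf{z})$ is an invertible formal change of variables so that the swap identities and the composition $\mathfrak{M}_{\widetilde{\mathbf{X}}}\circ\mathfrak{M}_{\mathbf{X}}=\mathrm{id}$ are genuine power series identities. The conceptual content, namely that the tilde operation is the time-reversal of the monotone flow, is precisely what makes $H$ anti-invariant.
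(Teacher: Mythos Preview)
Your proof is correct and follows essentially the same approach as the paper: establish that $\mathfrak{M}_{\mathbf{X}}$ and $\mathfrak{M}_{\widetilde{\mathbf{X}}}$ are compositional inverses, deduce $B_{\widetilde{\mathbf{X}}}=-R_{\mathbf{X}}$ from this together with \eqref{B} and \eqref{eq:F}, and then use the flow property \eqref{group} to identify $\widetilde{\mathbf{X}}$ with $\mathbf{X}(-1)$ in distribution. The only organizational difference is that you reach part~(1) via the multivariate swap identities and a coefficient-by-coefficient uniqueness argument for $M_{\widetilde{\mathbf{X}}}$, whereas the paper computes the composition $\mathfrak{M}_{\mathbf{X}}\circ\mathfrak{M}_{\widetilde{\mathbf{X}}}=\mathrm{id}$ directly first and reads off $B_{\widetilde{\mathbf{X}}}=-R_{\mathbf{X}}$ afterward; the underlying computation is the same.
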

\begin{proof}
(1)\,\, We will show that 
\begin{equation}\label{tilde} 
\mathfrak{M}_{\mathbf{X}} \circ \mathfrak{M}_{\widetilde{\mathbf{X}}} = \text{Id}.
\end{equation} 
The definition (\ref{eqt}) of $\widetilde{\mathbf{X}}$ reads $R_{\widetilde{\mathbf{X}}}(\mathbf{z})=-B_{\mathbf{X}}(\mathbf{z})$, so that 
\begin{equation}\label{eqt1}
-R_{\widetilde{\mathbf{X}}}(\mathbf{z})M_{\mathbf{X}}(\mathbf{z})=M_{\mathbf{X}}(\mathbf{z})-1
\end{equation}
from (\ref{B}). Replace $\mathbf{z}$ by $\mathfrak{M}_{\widetilde{\mathbf{X}}}(\mathbf{z})$ and then (\ref{eqt1}) becomes 
$$
-\left(M_{\widetilde{\mathbf{X}}}(\mathbf{z})-1\right)M_{\mathbf{X}}(\mathfrak{M}_{\widetilde{\mathbf{X}}}(\mathbf{z}))=M_{\mathbf{X}}(\mathfrak{M}_{\widetilde{\mathbf{X}}}(\mathbf{z}))-1, 
$$
where the relation (\ref{eq:F}) was used for $\widetilde{\mathbf{X}}$. Hence, $z_i M_{\widetilde{\mathbf{X}}}(\mathbf{z})M_{\mathbf{X}}(\mathfrak{M}_{\widetilde{\mathbf{X}}}(\mathbf{z})) = z_i$ for each $i$, implying the claim (\ref{tilde}). In particular, $M_{\mathbf{X}}(\mathfrak{M}_{\widetilde{\mathbf{X}}}(\mathbf{z}))=\frac{1}{M_{\widetilde{\mathbf{X}}}(\mathbf{z})}$. Replacing $\mathbf{z}$ by $\mathfrak{M}_{\widetilde{\mathbf{X}}}(\mathbf{z})$ in (\ref{eq:F}), one obtains $R_{\mathbf{X}}(\mathbf{z})=\frac{1}{M_{\widetilde{\mathbf{X}}}(\mathbf{z})}-1$, which coincides with $-B_{\widetilde{\mathbf{X}}}(\mathbf{z})$ thanks to (\ref{B}) for $\widetilde{\mathbf{X}}$.

(2)\,\, 
The flow property (\ref{group}) for $t=1,s=-1$ reads $\mathfrak{M}_\mathbf{X}\circ \mathfrak{M}_{\mathbf{X}(-1)}=\text{Id}$, which together with (\ref{tilde}) implies that $\mathbf{X}(-1)=\widetilde{\mathbf{X}}$ in distribution regarding $\varphi$. From (\ref{process}) we get 
$$
H_n(\widetilde{X}_{i_1}, \ldots, \widetilde{X}_{i_n})=H_n(X_{i_1}(-1),\ldots, X_{i_n}(-1))=- H_n(X_{i_1}, \ldots, X_{i_n}). 
$$
\end{proof}

\begin{proof}[Proof of Theorem \ref{mainthm1}\eqref{mf}]
From (\ref{mb}) and Lemma~\ref{lem1}, we obtain 
\begin{align*}
-R_n(X_1, \ldots,X_n)&=B_n(\widetilde{X}_1,\ldots,\widetilde{X}_n)= \sum_{\pi\in \irr\MP(n)} \frac{1}{\abs{\pi}!}H_\pi(\widetilde{X}_1,\ldots,\widetilde{X}_n)\\
&=\sum_{\pi\in \irr\MP(n)}\frac{(-1)^{\abs{\pi}}}{\abs{\pi}!}H_\pi(X_1,\ldots,X_n).
\end{align*}
\end{proof}

\begin{proof}[Proof of \eqref{eq:boolean2free:BeNi1}] From the multi-variate version generalization of (\ref{eq:free2boolean}) and Lemma \ref{lem1}, we have the following: 
\begin{align*}
-R_n(X_1, \ldots,X_n)
&=B_n(\widetilde{X}_1,\ldots,\widetilde{X}_n)
 = \sum_{\pi\in \irr\NC(n)} R_\pi(\widetilde{X}_1,\ldots,\widetilde{X}_n)\\
&=\sum_{\pi\in \irr\NC(n)}(-1)^{\abs{\pi}}B_\pi(X_1,\ldots,X_n).
\end{align*}
\end{proof}

\begin{rem} $\irr\NC(n)$ is a lattice and is isomorphic to $\NC(n-1)$;
 however we can not apply the M\"obius inversion  directly to the
 free-to-boolean 
 formula \eqref{eq:free2boolean} to get the boolean-to-free formula
 \eqref{eq:boolean2free:BeNi1} since it does not respect multiplicativity.
\end{rem}

\section{Colored trees and proof of Theorem \ref{mainthm2}}\label{sec:colored}

The concept of colored partitions was introduced by Lenczewski \cite{L12}. 
\begin{defi}
  An \emph{$N$-colored partition} of $[n]$ is a pair $(\pi, f)$, where $\pi=\{V_1, \ldots, V_k\}$ is a partition of $[n]$ and  $f$ is a map from the set
  $\{V_1, \ldots, V_k\}$ to $[N]$. The set of noncrossing $N$-colored partitions of $[n]$ is denoted by $\NC(n,N)$. When $i=f(V)$, we say that $V\in\pi$ is colored by $i$. 
\end{defi}
\begin{rem}
  An ordered partition of $[n]$ is a $\abs{\pi}$-colored partition $(\pi,f)$ of $[n]$ such that every block has a different color.
\end{rem}

We will express monotone cumulants in terms of free cumulants. For this
purpose, we are going to associate a polynomial to a rooted tree.

\begin{defi}
  An $N$-labelling of a graph is a map $f$ from its vertices to $[N]$. A
  labelling $f$ of a rooted tree   is called \emph{nondecreasing} 
  if for every vertex $v$ and every child vertex $u$ of $v$
  we have $f(v)\leq f(u)$.
  An $N$-labelling of a forest is called nondecreasing
  if the labels on each of its trees are nondecreasing.
\end{defi}
\begin{prop}
  Let $t$ be a rooted tree and let $P_t(N)$ be the number of nondecreasing
  $N$-labellings of $t$.
  Then $P_t(N)$ is a polynomial  in $N$ of degree $\leq \abs{t}$ with zero constant term.
  An analogous statement holds for forests.
\end{prop}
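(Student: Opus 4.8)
The plan is to prove the statement by induction on the number of vertices, exploiting the recursive structure of rooted trees exactly as in the definition of the tree factorial. First I would set up a recursion for $P_t(N)$. If $t$ consists of a root with branches $t_1,\dots,t_r$, then a nondecreasing labelling is built by choosing a label $j\in[N]$ for the root and then labelling each branch $t_i$ nondecreasingly using labels from $\{j,j+1,\dots,N\}$; since this set is a chain of size $N-j+1$, the number of admissible labellings of $t_i$ is $P_{t_i}(N-j+1)$, and the choices on distinct branches are independent. Summing over $j$ and substituting $k=N-j+1$ gives
\[
P_t(N)=\sum_{j=1}^{N}\prod_{i=1}^{r}P_{t_i}(N-j+1)=\sum_{k=1}^{N}\prod_{i=1}^{r}P_{t_i}(k).
\]
The base case is the single vertex, where $P_t(N)=N$, a polynomial of degree $\abs{t}=1$ vanishing at $N=0$.

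For the inductive step I would assume that each $P_{t_i}$ is a polynomial of degree $\le\abs{t_i}$ with zero constant term. Then $Q(k):=\prod_{i=1}^{r}P_{t_i}(k)$ is a polynomial of degree $\le\sum_i\abs{t_i}=\abs{t}-1$, and it has zero constant term, since the constant term of a product is the product of the constant terms. It then remains to show that the partial sum $N\mapsto\sum_{k=1}^{N}Q(k)$ agrees with a polynomial in $N$ of degree $\le\abs{t}$ with zero constant term, which completes the induction.

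The hard part is this summation lemma, which I would settle using the binomial basis. Writing $Q(k)=\sum_{j\ge1}c_j\binom{k}{j}$ — where no $\binom{k}{0}$ term appears precisely because $Q$ has zero constant term, i.e.\ $Q(0)=c_0=0$ — the hockey-stick identity $\sum_{k=1}^{N}\binom{k}{j}=\binom{N+1}{j+1}$ yields
\[
\sum_{k=1}^{N}Q(k)=\sum_{j\ge1}c_j\binom{N+1}{j+1},
\]
which is manifestly a polynomial in $N$; each summand $\binom{N+1}{j+1}$ has degree $j+1\le\abs{t}$, so the total degree is $\le\abs{t}$, and evaluating at $N=0$ gives $\sum_{j\ge1}c_j\binom{1}{j+1}=0$ since $j+1\ge2$, so the constant term vanishes. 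Conceptually this is nothing but the order polynomial of the tree viewed as a poset with the root as minimum, and the zero constant term reflects the empty-sum convention $P_t(0)=0$; the only genuine content is the standard fact that partial sums of a polynomial are again polynomial, which the binomial basis makes transparent.

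Finally, the forest case is immediate from multiplicativity of the labelling count across connected components. For a forest with constituent trees $t_1,\dots,t_s$, the nondecreasing labellings on distinct trees are chosen independently, so the number of nondecreasing $N$-labellings is $\prod_{j=1}^{s}P_{t_j}(N)$, a product of polynomials each of degree $\le\abs{t_j}$ with zero constant term. Hence it is a polynomial of degree $\le\sum_{j}\abs{t_j}$, the total number of vertices of the forest, and its constant term is again zero.
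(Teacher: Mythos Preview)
Your proof is correct and follows essentially the same inductive strategy as the paper: decompose the tree into root and branches, condition on the root label, and reduce to showing that the partial sum of a polynomial is again a polynomial of one higher degree with zero constant term. The only cosmetic difference is that you handle this last summation via the binomial basis and the hockey-stick identity, whereas the paper invokes Faulhaber's formula and Bernoulli polynomials; your substitution $k=N-j+1$ also makes the summand a polynomial in $k$ alone, which is slightly cleaner than the paper's $Q(N,k)$.
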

\begin{proof}
 It is easy to see that the polynomial $P_t(N)$ of a nesting forest consisting
 of rooted trees $t_1,\dots t_k$ is just $\prod_i P_{t_i}(N)$,
 therefore it suffices to show that $P_t(N)$ is a polynomial without constant
 term for every rooted tree $t$. 
 We proceed by induction. If $t$ has only one vertex,
 then $P_t(N)=N$ satisfies the claimed property.
 Otherwise  $t$ decomposes into the root vertex and branches $t_1',\dots, t_m'$.
 If the root gets label $k$, all other vertices must get labels not smaller
 than $k$ and the number of such nondecreasing labellings is
 $$
 Q(N,k)=\prod_{i=1}^m P_{t_i'}(N-k+1)
 $$
 which by assumption is a polynomial in $N$ and $k$ of degree $\leq \abs{t}-1$ without constant term.
 The number of $N$-labellings can be enumerated recursively by conditioning
 on the label of the root and we obtain
 $$
 P_t(N) = \sum_{k=1}^N Q(N,k)
 .
 $$
 We can now apply Faulhaber's summation formula 
 \begin{equation}
   \label{eq:faulhaber}
 \sum_{k=1}^N k^{n} =
 \frac{B_{n+1}(N+1)-B_{n+1}(0)}{n+1}
 \end{equation}
 to each term  to express $P_t(N)$ in terms of Bernoulli polynomials $B_n(x)$
 defined by their generating function
 $$
 \frac{z e^{x z}}{e^z-1}= \sum_{n=0}^\infty B_n(x) \frac{z^n}{n!}. 
 $$
 It follows that $P_t(N)$ is a polynomial of degree $\leq\abs{t}$
 without constant term.
\end{proof}
\begin{defi}
 Let $\alpha_t$ be the coefficient of the linear term of $P_t$, i.e., 
 $$
 \alpha_t=P_t'(0). 
 $$
  For $\pi \in
\NC(n)$, we define $P_\pi(N):= P_{\tau(\pi)}(N)$ and $\alpha_\pi:=\alpha_{\tau(\pi)}$.
\end{defi}
\begin{exa}
\begin{enumerate}
\item If $t$ is not connected then $\alpha_t=0$.
 \item Let $t$ be a tree consisting of the vertices $\{1,2, \ldots, n+1 \}$ and $n$
  edges, each connecting $1$ and $k$ for $2 \leq k \leq n+1$. The vertex $1$ is
  the root of $t$.  Then $P_t(N)$ is equal to Faulhaber's formula
  \eqref{eq:faulhaber}
  and $\alpha_t = P_t'(0)=
  \frac{1}{n+1}B_{n+1}'(1) = B_{n}(1)$ because of the identity
  $B_k'(x)=kB_{k-1}(x)$. So $\alpha_t=B_{n}(1)$ is the $n$th Bernoulli number.

 \item A tree $t$ has $n$ vertices $\{1,2, \ldots, n\}$ and $n-1$ edges, each
  connecting $k$ and $k+1$ for $1 \leq k \leq n-1$, and $1$ is the root of
  $t$. Then $P_t(x) = \binom{x+n-1}{n}$ and $\alpha_t=\frac{1}{n}$.
\end{enumerate}
\end{exa}

\begin{proof}[Proof of Theorem \ref{mainthm2}]
  Let $J$ be a subset of $[N] \times [N]$ including the diagonal set $\{(j,j)
  \mid j \in [N]\}$.  Lenczewski introduced the concept of strong matricial
  freeness for an array $(X_{ij})_{(i,j)\in J}$ of elements in a
  noncommutative probability space $(\mathcal{A}, (\varphi_{ij})_{(i,j) \in
    J})$, where $(\varphi_{ij})_{(i,j) \in
    J}$  is an array of unital linear functionals. Following \cite{L12}, we assume that
  $\varphi_{ii}=\varphi$, the same linear functional, and $\varphi_{ij}$ do not depend on
  $i$ if $i\neq j$.
  Let $(X_{ij})_{(i,j) \in J}$ be an array of elements with
  low-identical distributions, that is, the moments $\varphi(X_{ij}^n)$ do not
  depend on $j$.

  Given $(\pi, f) \in \NC(n,N)$, we associate a product of free
  cumulants $r_{(\pi,f)}:= r_{(V_1,f)} \dotsm r_{(V_{\abs{\pi}},f)}$ as
  follows. Take a block $V_k$ of $\pi$ with color $i$. If its outer blocks are
  all colored by $i$, we define $r_{(V_k, f)}:=r_{\abs{V_k}}(X_{ii})$. If $V_k$
  has an outer block with another color $j$, then
  $r_{(V_k,f)}:=r_{\abs{V_k}}(X_{ij})$ where $j$ is the color of the outer
  block nearest to $V_k$ whose color $j$ is different from $i$. If a pair
  $(i,j)$ is not an element of $J$, then we understand $r_k(X_{ij})=0$, $k
  =1,2,3,\ldots$.

  We need the following results (see Lemma 7.1 and Theorem 3.1 of \cite{L12}
  and also Proposition 4.1 of \cite{L10}):
\begin{enumerate}[\rm(i)]
\item 
 \label{it:Lenc1}
 \begin{equation}\label{eq:Lenc}
 \varphi\left(\left(\sum_{(i,j) \in J}X_{ij}\right)^n\right) = \sum_{(\pi,f) \in\NC(n,N)} r_{(\pi,f)}. 
 \end{equation}
\item
 \label{it:Lenc2}
 Let $(X_{ij})_{(i,j)\in J}$ be strongly matricially free. If $J=\{(i,j)
 \mid 1 \leq i \leq j \leq N \}$ and the distributions of $X_{ij}$ do not
 depend on $j$, then $X_i:=\sum_{j =1}^i X_{ij}$, $i \in[N]$, are monotonically
 independent.
\end{enumerate}
In statement \eqref{it:Lenc2}, note that $X_i$ has the same distribution as $X_{ii}$
with respect to $\varphi$ as mentioned in the proof of Proposition 4.1 of
\cite{L10}.
In statement \eqref{it:Lenc2} we assume in addition that the distributions of $X_{ij}$ do
not depend on $(i,j)$. Then formula \eqref{eq:Lenc} in statement \eqref{it:Lenc1} becomes
$$
\varphi((X_1+\cdots+X_N)^n)= \sum_{(\pi,f) \in\NC(n,N)} r_{(\pi,f)}
$$ 
for monotonically i.i.d.\ random variables $X_i$. Each summand $r_{(\pi,f)}$ is
either $\prod_{V \in \pi} r_{\abs{V}}(X_1)$ or $0$. The summand $r_{(\pi,f)}$
does not vanish if and only if, for each block $V$ with color $i$, its outer
blocks have colors not greater than $i$. The number of such colorings is just
equal to $P_\pi(N)$. By definition, the $n$th monotone cumulant of $X_1$
coincides with the coefficient of $N$ in $\varphi((X_1+\cdots+X_N)^n)$ 
(which, in particular, is zero unless $\pi\in\irr\NC(n)$), so (\ref{eq:fm}) follows.

The identity (\ref{eq:bm}) is proved similarly to Theorem \ref{mainthm1}(\ref{mf}).  
From (\ref{eq:fm}) and Lemma \ref{lem1}, it follows that
\[
\begin{split}
-h_n(X)&=h_n(\widetilde{X})= \sum_{\pi\in \NC(n)} \alpha_\pi r_\pi(\widetilde{X})=\sum_{\pi\in \NC(n)}(-1)^{\abs{\pi}} \alpha_\pi b_\pi(X).
\end{split}
\]

Identity (\ref{eq:cm}) follows from (\ref{eq:fm}) together with the easy fact
(following from (\ref{eq:class2free})) that more generally
$$
r_\sigma=\sum_{\substack{\pi\in\SP(n)\\\bar{\pi}=\sigma}}\kappa_\pi,\qquad \sigma \in\NC(n). 
$$ 
Indeed, we observe that $\pi\in\irr\SP(n)$ if and only if
$\bar{\pi}\in\irr\NC(n)$. 
Hence 
$$
\sum_{\pi\in \irr\SP(n)}\alpha_{\bar{\pi}} \kappa_\pi=\sum_{\sigma\in
  \irr\NC(n)}\alpha_{\sigma}\sum_{\substack{\pi\in\SP(n)\\\bar{\pi}=\sigma}}\kappa_\pi=\sum_{\sigma\in
  \irr\NC(n)}\alpha_{\sigma}r_\sigma=h_n.
$$
\end{proof}

\begin{rem} 
The moment-cumulant formula (\ref{eq:Lenc}) is known only for the univariate case, and so we can prove Theorem \ref{mainthm2} only for univariate cumulants.  
\end{rem}

\section{Tutte polynomials and proof of Theorem \ref{mainthm3}}\label{sec:proof3}

For an arbitrary finite set $\mathcal{S}$ we denote by $\SP(\mathcal{S})$ its set of partitions. Any bijection between $\mathcal{S}$ and $\{1,\dots \abs{\mathcal{S}}\}$ induces a poset isomorphism $\SP(\mathcal{S})$ to $\SP(\abs{\mathcal{S}})$. If $\mathcal{S}$ is totally ordered we consider the bijection which preserves this order and define $\NC(\mathcal{S}),\I(\mathcal{S})$ via this isomorphism.

\begin{defi}

Let $\pi\in\SP(n)$.
\begin{enumerate}
 \item We define the crossing graph $G(\pi):=(V,E)$ of $\pi$, where the set of
  vertices $V=\{V_1,\dots ,V_{\abs{\pi}}\}$ is indexed by the blocks\footnote{It
    should not cause confusion that we regard $V_i$ simultaneously as a vertex
    of $G(\pi)$ and as a block of $\pi$}
  of $\pi$ and an edge joins the vertices $V_i, V_j$ if and only if they cross,
  i.e., $W=(V_i,V_j)\in(\SP(V_i\cup V_j)\setminus\NC(V_i\cup V_j))$.

 \item Similarly, the vertices of the anti-interval graph
  $\tilde{G}(\pi):=(V, E)$ of $\pi$ are just the blocks of $\pi$. An
  edge joining $(V_i,V_j)$ is drawn if and only if $W=(V_i,V_j)\in(\SP(V_i\cup
  V_j)\setminus\I(V_i\cup V_j))$.
  (For a noncrossing partition this is the nesting forest
  from Definition~\ref{def:nestingforest},
  augmented by the edges from all vertices to all their descendents).

 \item For a finite graph $G=(V,E)$ and $e\in E$, we let $G\setminus
  e=(V,E\setminus e)$, and $G/ e=(V/e,E\setminus e)$ be the graph obtained from
  removing $e$ and identifying the endpoints of $e$. The \emph{Tutte
    polynomial} 
  $T_G(x,y)$ of $G$ can be defined recursively by setting $T_G(x,y)=1$ if
  $E=\emptyset$ and:
  $$
  T_G(x,y)=
  \begin{cases}
    xT_{G/e}(x,y) & \text{if $e$ is a bridge,} \\
    yT_{G\setminus e}(x,y) & \text{if $e$ is a loop,}\\
    T_{G/e}(x,y)+T_{G\setminus e}(x,y) & \text{otherwise.}
  \end{cases}
  $$
\end{enumerate}
\end{defi}

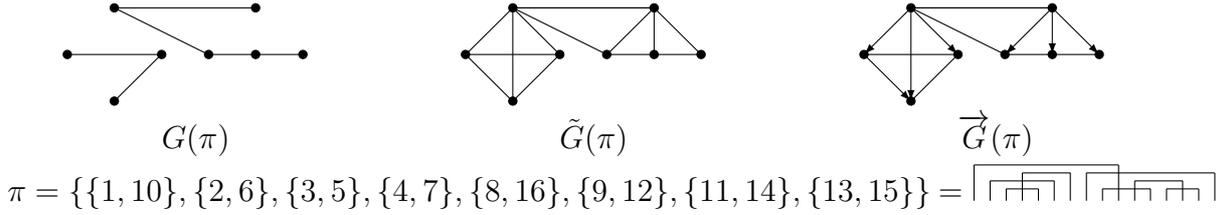
\begin{figure}
    \centering
    \begin{minipage}{0.3\linewidth}
    \setlength{\unitlength}{1.5em}
    \begin{picture}(6.0,4.0)    
      \put(2,1){\circle*{0.2}}
      \put(2,1){\line(1,1){1.0}}
      \put(1,2){\circle*{0.2}}
      \put(3,2){\line(-1,0){2.0}}
      \put(3,2){\circle*{0.2}}
      \put(4,2){\circle*{0.2}}
      \put(4,2){\line(1,0){2.0}}
      \put(5,2){\circle*{0.2}}
      \put(6,2){\circle*{0.2}}
      \put(2,3){\circle*{0.2}}
      \put(2,3){\line(2,-1){2.0}}
      \put(2,3){\line(1,0){3.0}}
      \put(5,3){\circle*{0.2}}
      \put(3,0){$G(\pi)$}
    \end{picture}

    \end{minipage}
    \begin{minipage}{0.3\linewidth}
    \setlength{\unitlength}{1.5em}
    \begin{picture}(6.0,4.0)    
      \put(2,1){\circle*{0.2}}
      \put(2,1){\line(1,1){1.0}}
      \put(1,2){\circle*{0.2}}
      \put(1,2){\line(1,-1){1.0}}
      \put(3,2){\line(-1,0){2.0}}
      \put(3,2){\circle*{0.2}}
      \put(4,2){\circle*{0.2}}
      \put(4,2){\line(1,0){2.0}}
      \put(5,2){\circle*{0.2}}
      \put(6,2){\circle*{0.2}}
      \put(2,3){\circle*{0.2}}
      \put(2,3){\line(-1,-1){1.0}}
      \put(2,3){\line(1,-1){1.0}}
      \put(2,3){\line(0,-1){2.0}}
      \put(2,3){\line(2,-1){2.0}}
      \put(2,3){\line(1,0){3.0}}
      \put(2,3){\line(1,0){3.0}}
      \put(5,3){\circle*{0.2}}
      \put(5,3){\line(-1,-1){1.0}}
      \put(5,3){\line(1,-1){1.0}}
      \put(5,3){\line(0,-1){1.0}}
      \put(3,0){$\tilde{G}(\pi)$}
    \end{picture}
    \end{minipage}
\begin{minipage}{0.3\linewidth}
    \setlength{\unitlength}{1.5em}
    \begin{picture}(6.0,4.0)    
      \put(2,1){\circle*{0.2}}
      \put(2,1){\line(1,1){1.0}}
      \put(1,2){\circle*{0.2}}
      \put(1,2){\vector(1,-1){1.0}}
      \put(3,2){\line(-1,0){2.0}}
      \put(3,2){\circle*{0.2}}
      \put(4,2){\circle*{0.2}}
      \put(4,2){\line(1,0){2.0}}
      \put(5,2){\circle*{0.2}}
      \put(6,2){\circle*{0.2}}
      \put(2,3){\circle*{0.2}}
      \put(2,3){\vector(-1,-1){1.0}}
      \put(2,3){\vector(1,-1){1.0}}
      \put(2,3){\vector(0,-1){2.0}}
      \put(2,3){\line(2,-1){2.0}}
      \put(2,3){\line(1,0){3.0}}
      \put(2,3){\line(1,0){3.0}}
      \put(5,3){\circle*{0.2}}
      \put(5,3){\vector(-1,-1){1.0}}
      \put(5,3){\vector(1,-1){1.0}}
      \put(5,3){\vector(0,-1){1.0}}
      \put(3,0){$\overrightarrow{G}(\pi)$}
    \end{picture}
    \end{minipage}

      $$\pi=\{\{1,10\},\{2,6\},\{3,5\},\{4,7\},\{8,16\},\{9,12\},\{11,14\},\{13,15\}\}=
\begin{picture}(92,12.5)(1,0)
  \put(2,0){\line(0,1){13.5}}
  \put(8,0){\line(0,1){7.5}}
  \put(14,0){\line(0,1){4.5}}
  \put(20,0){\line(0,1){10.5}}
  \put(26,0){\line(0,1){4.5}}
  \put(32,0){\line(0,1){7.5}}
  \put(38,0){\line(0,1){10.5}}
  \put(44,0){\line(0,1){10.5}}
  \put(50,0){\line(0,1){4.5}}
  \put(56,0){\line(0,1){13.5}}
  \put(62,0){\line(0,1){7.5}}
  \put(68,0){\line(0,1){4.5}}
  \put(74,0){\line(0,1){4.5}}
  \put(80,0){\line(0,1){7.5}}
  \put(86,0){\line(0,1){4.5}}
  \put(92,0){\line(0,1){10.5}}
  \put(14,4.5){\line(1,0){12}}
  \put(8,7.5){\line(1,0){24}}
  \put(20,10.5){\line(1,0){18}}
  \put(2,13.5){\line(1,0){54}}
  \put(50,4.5){\line(1,0){18}}
  \put(62,7.5){\line(1,0){18}}
  \put(74,4.5){\line(1,0){12}}
  \put(44,10.5){\line(1,0){48}}
\end{picture}
$$
    \caption{A partition and its associated graphs (see Definition \ref{defimonotone} for the third graph).}
    \label{fig:graphs}
  \end{figure}

  \begin{rem}
    Let $(A_i)_{i\in I}$ be a family of sets.
    Recall that the \emph{intersection graph} is the graph with vertex set
    $\{A_i : i \in I\}$ where there is an edge $i\sim j$ if and only if
    $A_i\cap A_j\ne 0$, see \cite{prisner:1998:journey} for more information.
    An \emph{interval graph} is the intersection graph of a family
    of intervals on the real line.
    Coincidentally, the anti-interval graph defined above
    is exactly the interval graph generated by the convex hulls of the blocks
    of $\pi$. 
  \end{rem}

\begin{rem}
Let $G=(V,E)$ be any finite graph. For $\pi\in \SP(V)$ we define $i(E,\pi)$ to be the number of edges in $E$ which connect vertices with both endpoints belonging to the same block of $\pi$.  It was shown in \cite{JV13} that for any graph $G$ and $q\neq 1$ we have:

\begin{equation}\label{eq:graph}
\frac{1}{(q-1)^{\#V-1}}\sum_{\pi\in\SP(V)}q^{i(E,\pi)}\mu_\SP(\pi,1_V)=
\begin{cases}
T_G(1,q) & \text{if $G$ is connected}, \\
0 & \text{otherwise}
\end{cases}
\end{equation}
with the convention that $q^0=1$ for $q=0$.
\end{rem}

We obtain the Boolean-to-classical cumulant formula by following the lines of the proof of (\ref{eq:free2class}) in \cite{JV13}.

\begin{proof}[Proof of Theorem \ref{mainthm3}]
Let $X_1,\dots,X_n\in \mathcal{A}$. Using subsequently the classical and the
Boolean moment-cumulant formulas (\ref{cmclassical}), (\ref{mcboolean}), we obtain
\begin{align*}
K_n(X_1,\dots,X_n)&=\sum_{\pi \in \SP(n)}\varphi_{\pi}(X_1,\dots,X_n)\mu_\SP(\pi,\hat{1}_n)\\
&=\sum_{\substack{\sigma,\pi \in \SP(n) \\ \sigma\preceq\pi}}B_{\sigma}(X_1,\dots,X_n)\mu_\SP(\pi,\hat{1}_n)\\
&=\sum_{\sigma\in \SP(n)}B_{\sigma}(X_1,\dots,X_n)\sum_{\pi\succeq\sigma}\mu_\SP(\pi,\hat{1}_n),
\end{align*}
where, for $\pi \geq \sigma$, we write $\pi\succeq\sigma$ if the restriction $\sigma|_W\in \SP(W)$ to any block $W$ of $\pi$ is an interval partition.

Let us fix $\sigma \in \SP(n)$ and consider its anti-interval graph $\tilde{G}(\sigma)=(V,E)$. There is a one-to-one correspondence $\pi\mapsto \tilde{\pi}$ between partitions $\{\pi:\pi\geq\sigma\}\subset \SP(n)$ and $\SP(V)$: $\pi \geq \sigma$ is obtained by gluing blocks of $\sigma$, and $\tilde{\pi}$ describes which blocks of $\sigma$ are glued together. In view of the formula (\ref{eq:graph}) for $q=0$, we observe that $i(E,\tilde{\pi})=0$ exactly for those $\pi\geq\sigma$ such that $\pi\succeq\sigma$. Furthermore, $\abs{\pi}=\abs{\tilde{\pi}}$  and hence $\mu_\SP(\tilde{\pi},1_V)=(-1)^{\abs{\tilde{\pi}}-1}(\abs{\tilde{\pi}}-1)!=(-1)^{\abs{\pi}-1}(\abs{\pi}-1)!=\mu_\SP(\pi,\hat{1}_n)$. Therefore 
$$\sum_{\pi\succeq\sigma}\mu_\SP(\pi,\hat{1}_n)=\sum_{\pi\geq\sigma}0^{i(E,\tilde{\pi})}\mu_\SP(\pi,\hat{1}_n)=\sum_{\tilde{\pi}\in\SP(V)}0^{i(E,\tilde{\pi})}\mu_\SP(\tilde{\pi},1_V)$$
and thus, formula (\ref{eq:graph}) yields 
$$
\sum_{\pi\succeq\sigma}\mu_\SP(\pi,\hat{1}_n)=\sum_{\tilde{\pi}\in\SP(V)}0^{i(E,\tilde{\pi})}\mu_\SP(\tilde{\pi},1_V)
=
\begin{cases}
(-1)^{\abs{\sigma}-1}T_{\tilde{G}(\sigma)}(1,0) & \text{if $\tilde{G}(\sigma)$ is connected,} \\
0 & \text{otherwise}.
\end{cases}
$$
It is not hard to see that the number of blocks of the interval closure of $\sigma$ is equal to the number of connected components of its anti-interval graph. Therefore $\tilde{G}(\sigma)$ is connected iff $\sigma\in\irr\SP(n)$ and the formula follows.
\end{proof}

Several evaluations of the Tutte polynomial have combinatorial 
interpretations. For our purposes it will be interesting
to note the fact that
$T_G(1,0)$ equals the number of rooted acyclic orientations
with unique specified source; this number does not depend on the choice
of the source \cite{GreeneZaslavsky:1983:interpretation}.
Recall that an \emph{acyclic orientation} of a graph is an orientation without
oriented cycles. An acyclic orientation has source $v$ if there is a directed
path from $v$ to every other vertex. Alternatively, this can be interpreted
as the Hasse diagram of an ordering of the vertices of $G$ with prescribed
unique minimal element.

Returning from graphs to partitions this has the following  pictorial interpretation.
\begin{defi}
  \begin{enumerate}
   \item   Let $\pi$ be a connected set partition.
    A \emph{crossing heap} on $\pi$ is a poset structure on the blocks of $\pi$
    such that any pair of crossing blocks is comparable.
    A \emph{pyramid} is a crossing heap such that the first block is
    the only maximal element.
   \item Let $\pi$ be an irreducible set partitions.
    An \emph{interval heap} on $\pi$ is a poset structure on the blocks of
    $\pi$
    such that any pair of blocks whose convex hulls have nonempty intersection
    are comparable.
    A \emph{pyramid} is an interval heap such that the first block is
    the only maximal element.
  \end{enumerate}
\end{defi}
The following proposition is immediate from the definition, see
Fig.~\ref{fig:crossingheaps} and Fig.~\ref{fig:intervalheaps}.
\begin{figure}
\begin{minipage}{0.3\linewidth}
  \begin{center}
    \setlength{\unitlength}{3em}
    \begin{picture}(3.0,3.0)(-0.5,0.2)
      \put(1,2){\circle*{0.1}}
      \put(1.1,2.1){$1$}
      \put(0,1){\circle*{0.1}}
      \put(-0.2,1.1){$2$}
      \put(2,1){\circle*{0.1}}
      \put(2.1,1.1){$3$}
      \put(1,0){\circle*{0.1}}
      \put(1.2,-0.2){$4$}
      \put(1,2){\line(1,-1){1}}
      \put(1,2){\line(-1,-1){1}}
      \put(1,0){\line(-1,1){1}}
      \put(1,0){\line(1,1){1}}
    \end{picture}

$$
\setlength{\unitlength}{\standardunit}
G(\begin{picture}(44,9.5)(1,0)
  \put(2,0){\line(0,1){4.5}}
  \put(8,0){\line(0,1){10.5}}
  \put(14,0){\line(0,1){7.5}}
  \put(20,0){\line(0,1){4.5}}
  \put(26,0){\line(0,1){4.5}}
  \put(32,0){\line(0,1){7.5}}
  \put(38,0){\line(0,1){10.5}}
  \put(44,0){\line(0,1){4.5}}
  \put(2,4.5){\line(1,0){18}}
  \put(14,7.5){\line(1,0){18}}
  \put(8,10.5){\line(1,0){30}}
  \put(26,4.5){\line(1,0){18}}
\end{picture})
$$
  \end{center}
\end{minipage}
\begin{minipage}{0.3\linewidth}
  \begin{center}
    \setlength{\unitlength}{3em}
    \begin{picture}(3.0,3.0)(-0.5,0.2)
      \put(1,2){\circle*{0.1}}
      \put(1.1,2.1){$1$}
      \put(0,1){\circle*{0.1}}
      \put(-0.2,1.1){$2$}
      \put(2,1){\circle*{0.1}}
      \put(2.1,1.1){$3$}
      \put(1,0){\circle*{0.1}}
      \put(1.2,-0.2){$4$}
      \put(1,2){\vector(1,-1){0.95}}
      \put(1,2){\vector(-1,-1){0.95}}
      \put(1,0){\vector(-1,1){0.95}}
      \put(1,0){\vector(1,1){0.95}}
    \end{picture}

$$
\setlength{\unitlength}{\standardunit}
\begin{picture}(44,9.5)(1,0)
  \put(2,0){\line(0,1){10.5}}
  \put(8,0){\line(0,1){7.5}}
  \put(14,0){\line(0,1){4.5}}
  \put(20,0){\line(0,1){10.5}}
  \put(26,0){\line(0,1){10.5}}
  \put(32,0){\line(0,1){4.5}}
  \put(38,0){\line(0,1){7.5}}
  \put(44,0){\line(0,1){10.5}}
  \put(2,10.5){\line(1,0){18}}
  \put(14,4.5){\line(1,0){18}}
  \put(8,7.5){\line(1,0){30}}
  \put(26,10.5){\line(1,0){18}}
\end{picture}
$$
  \end{center}
\end{minipage}

\begin{minipage}{0.3\linewidth}
  \begin{center}
    \setlength{\unitlength}{3em}
    \begin{picture}(3.0,3.0)(-0.5,0.2)
      \put(1,2){\circle*{0.1}}
      \put(1.1,2.1){$1$}
      \put(0,1){\circle*{0.1}}
      \put(-0.2,1.1){$2$}
      \put(2,1){\circle*{0.1}}
      \put(2.1,1.1){$3$}
      \put(1,0){\circle*{0.1}}
      \put(1.2,-0.2){$4$}
      \put(1,2){\vector(1,-1){0.95}}
      \put(1,2){\vector(-1,-1){0.95}}
      \put(0,1){\vector(1,-1){0.95}}
      \put(2,1){\vector(-1,-1){0.95}}
    \end{picture}

$$
\setlength{\unitlength}{\standardunit}
\begin{picture}(44,9.5)(1,0)
  \put(2,0){\line(0,1){13.5}}
  \put(8,0){\line(0,1){10.5}}
  \put(14,0){\line(0,1){7.5}}
  \put(20,0){\line(0,1){13.5}}
  \put(26,0){\line(0,1){4.5}}
  \put(32,0){\line(0,1){7.5}}
  \put(38,0){\line(0,1){10.5}}
  \put(44,0){\line(0,1){4.5}}
  \put(2,13.5){\line(1,0){18}}
  \put(14,7.5){\line(1,0){18}}
  \put(8,10.5){\line(1,0){30}}
  \put(26,4.5){\line(1,0){18}}
\end{picture}
$$
  \end{center}
\end{minipage}
\begin{minipage}{0.3\linewidth}
  \begin{center}
    \setlength{\unitlength}{3em}
    \begin{picture}(3.0,3.0)(-0.5,0.2)
      \put(1,2){\circle*{0.1}}
      \put(1.1,2.1){$1$}
      \put(0,1){\circle*{0.1}}
      \put(-0.2,1.1){$2$}
      \put(2,1){\circle*{0.1}}
      \put(2.1,1.1){$3$}
      \put(1,0){\circle*{0.1}}
      \put(1.2,-0.2){$4$}
      \put(1,2){\vector(1,-1){0.95}}
      \put(1,2){\vector(-1,-1){0.95}}
      \put(0,1){\vector(1,-1){0.95}}
      \put(1,0){\vector(1,1){0.95}}
    \end{picture}

$$
\setlength{\unitlength}{\standardunit}
\begin{picture}(44,9.5)(1,0)
  \put(2,0){\line(0,1){13.5}}
  \put(8,0){\line(0,1){10.5}}
  \put(14,0){\line(0,1){4.5}}
  \put(20,0){\line(0,1){13.5}}
  \put(26,0){\line(0,1){7.5}}
  \put(32,0){\line(0,1){4.5}}
  \put(38,0){\line(0,1){10.5}}
  \put(44,0){\line(0,1){7.5}}
  \put(2,13.5){\line(1,0){18}}
  \put(14,4.5){\line(1,0){18}}
  \put(8,10.5){\line(1,0){30}}
  \put(26,7.5){\line(1,0){18}}
\end{picture}
$$
  \end{center}
\end{minipage}
\begin{minipage}{0.3\linewidth}
  \begin{center}
    \setlength{\unitlength}{3em}
    \begin{picture}(3.0,3.0)(-0.5,0.2)
      \put(1,2){\circle*{0.1}}
      \put(1.1,2.1){$1$}
      \put(0,1){\circle*{0.1}}
      \put(-0.2,1.1){$2$}
      \put(2,1){\circle*{0.1}}
      \put(2.1,1.1){$3$}
      \put(1,0){\circle*{0.1}}
      \put(1.2,-0.2){$4$}
      \put(1,2){\vector(1,-1){0.95}}
      \put(1,2){\vector(-1,-1){0.95}}
      \put(1,0){\vector(-1,1){0.95}}
      \put(2,1){\vector(-1,-1){0.95}}
    \end{picture}

$$
\setlength{\unitlength}{\standardunit}
\begin{picture}(44,9.5)(1,0)
  \put(2,0){\line(0,1){13.5}}
  \put(8,0){\line(0,1){4.5}}
  \put(14,0){\line(0,1){10.5}}
  \put(20,0){\line(0,1){13.5}}
  \put(26,0){\line(0,1){7.5}}
  \put(32,0){\line(0,1){10.5}}
  \put(38,0){\line(0,1){4.5}}
  \put(44,0){\line(0,1){7.5}}
  \put(2,13.5){\line(1,0){18}}
  \put(14,10.5){\line(1,0){18}}
  \put(8,4.5){\line(1,0){30}}
  \put(26,7.5){\line(1,0){18}}
\end{picture}
$$
  \end{center}
\end{minipage}

  \caption[Crossing heaps]{The crossing graph, some acyclic orientations and crossing heaps
    of the partition
$\pi=\mbox{\protect\begin{picture}(44,12.5)(1,0)
  \put(2,0){\line(0,1){4.5}}
  \put(8,0){\line(0,1){10.5}}
  \put(14,0){\line(0,1){7.5}}
  \put(20,0){\line(0,1){4.5}}
  \put(26,0){\line(0,1){4.5}}
  \put(32,0){\line(0,1){7.5}}
  \put(38,0){\line(0,1){10.5}}
  \put(44,0){\line(0,1){4.5}}
  \put(2,4.5){\line(1,0){18}}
  \put(14,7.5){\line(1,0){18}}
  \put(8,10.5){\line(1,0){30}}
  \put(26,4.5){\line(1,0){18}}
\protect\end{picture}}
$; blocks are numbered in the canonical order.
Note that the upper right orientation is not a pyramid.}
  \label{fig:crossingheaps}
\end{figure}

\begin{figure}
\begin{minipage}{0.3\linewidth}
  \begin{center}
    \setlength{\unitlength}{3em}
    \begin{picture}(4.0,4.0)(-0.5,0.2)
      \put(1,2){\circle*{0.1}}
      \put(1.1,2.1){$1$}
      \put(0,1){\circle*{0.1}}
      \put(-0.2,1.1){$2$}
      \put(2,1){\circle*{0.1}}
      \put(2.1,1.1){$4$}
      \put(1,0){\circle*{0.1}}
      \put(1.2,-0.2){$3$}
      \put(3,0){\circle*{0.1}}
      \put(3.2,-0.2){$5$}
      \put(1,2){\line(1,-1){1}}
      \put(1,2){\line(0,-1){2}}
      \put(1,2){\line(-1,-1){1}}
      \put(1,0){\line(-1,1){1}}
      \put(1,0){\line(1,1){1}}
      \put(2,1){\line(1,-1){1}}
    \end{picture}

$$
\setlength{\unitlength}{\standardunit}
\tilde{G}(
\begin{picture}(62,12.5)(1,0)
\put(2,0){\line(0,1){10.5}}
\put(8,0){\line(0,1){4.5}}
\put(14,0){\line(0,1){7.5}}
\put(20,0){\line(0,1){10.5}}
\put(26,0){\line(0,1){4.5}}
\put(32,0){\line(0,1){13.5}}
\put(38,0){\line(0,1){7.5}}
\put(44,0){\line(0,1){10.5}}
\put(50,0){\line(0,1){4.5}}
\put(56,0){\line(0,1){4.5}}
\put(62,0){\line(0,1){13.5}}
\put(8,4.5){\line(1,0){18}}
\put(14,7.5){\line(1,0){24}}
\put(2,10.5){\line(1,0){42}}
\put(50,4.5){\line(1,0){6}}
\put(32,13.5){\line(1,0){30}}
\end{picture})
$$
  \end{center}
\end{minipage}
\begin{minipage}{0.3\linewidth}
  \begin{center}
    \setlength{\unitlength}{3em}
    \begin{picture}(4.0,4.0)(-0.5,0.2)
      \put(1,2){\circle*{0.1}}
      \put(1.1,2.1){$1$}
      \put(0,1){\circle*{0.1}}
      \put(-0.2,1.1){$2$}
      \put(2,1){\circle*{0.1}}
      \put(2.1,1.1){$4$}
      \put(1,0){\circle*{0.1}}
      \put(1.2,-0.2){$3$}
      \put(3,0){\circle*{0.1}}
      \put(3.2,-0.2){$5$}
      \put(1,2){\vector(1,-1){0.95}}
      \put(1,2){\vector(0,-1){1.95}}
      \put(1,2){\vector(-1,-1){0.95}}
      \put(0,1){\vector(1,-1){0.95}}
      \put(2,1){\vector(-1,-1){0.95}}
      \put(2,1){\vector(1,-1){0.95}}
    \end{picture}

$$
\setlength{\unitlength}{\standardunit}
\begin{picture}(62,12.5)(1,0)
\put(2,0){\line(0,1){10.5}}
\put(8,0){\line(0,1){7.5}}
\put(14,0){\line(0,1){4.5}}
\put(20,0){\line(0,1){10.5}}
\put(26,0){\line(0,1){7.5}}
\put(32,0){\line(0,1){7.5}}
\put(38,0){\line(0,1){4.5}}
\put(44,0){\line(0,1){10.5}}
\put(50,0){\line(0,1){4.5}}
\put(56,0){\line(0,1){4.5}}
\put(62,0){\line(0,1){7.5}}
\put(8,7.5){\line(1,0){18}}
\put(14,4.5){\line(1,0){24}}
\put(2,10.5){\line(1,0){42}}
\put(50,4.5){\line(1,0){6}}
\put(32,7.5){\line(1,0){30}}
\end{picture}
$$
  \end{center}
\end{minipage}
\begin{minipage}{0.3\linewidth}
  \begin{center}
    \setlength{\unitlength}{3em}
    \begin{picture}(4.0,4.0)(-0.5,0.2)
      \put(1,2){\circle*{0.1}}
      \put(1.1,2.1){$1$}
      \put(0,1){\circle*{0.1}}
      \put(-0.2,1.1){$2$}
      \put(2,1){\circle*{0.1}}
      \put(2.1,1.1){$4$}
      \put(1,0){\circle*{0.1}}
      \put(1.2,-0.2){$3$}
      \put(3,0){\circle*{0.1}}
      \put(3.2,-0.2){$5$}
      \put(1,2){\vector(1,-1){0.95}}
      \put(1,2){\vector(0,-1){1.95}}
      \put(1,2){\vector(-1,-1){0.95}}
      \put(1,0){\vector(-1,1){0.95}}
      \put(2,1){\vector(-1,-1){0.95}}
      \put(2,1){\vector(1,-1){0.95}}
    \end{picture}

$$
\setlength{\unitlength}{\standardunit}
\begin{picture}(62,12.5)(1,0)
\put(2,0){\line(0,1){13.5}}
\put(8,0){\line(0,1){4.5}}
\put(14,0){\line(0,1){7.5}}
\put(20,0){\line(0,1){13.5}}
\put(26,0){\line(0,1){4.5}}
\put(32,0){\line(0,1){10.5}}
\put(38,0){\line(0,1){7.5}}
\put(44,0){\line(0,1){13.5}}
\put(50,0){\line(0,1){4.5}}
\put(56,0){\line(0,1){4.5}}
\put(62,0){\line(0,1){10.5}}
\put(8,4.5){\line(1,0){18}}
\put(14,7.5){\line(1,0){24}}
\put(2,13.5){\line(1,0){42}}
\put(50,4.5){\line(1,0){6}}
\put(32,10.5){\line(1,0){30}}
\end{picture}
$$
  \end{center}
\end{minipage}
\begin{minipage}{0.3\linewidth}
  \begin{center}
    \setlength{\unitlength}{3em}
    \begin{picture}(4.0,4.0)(-0.5,0.2)
      \put(1,2){\circle*{0.1}}
      \put(1.1,2.1){$1$}
      \put(0,1){\circle*{0.1}}
      \put(-0.2,1.1){$2$}
      \put(2,1){\circle*{0.1}}
      \put(2.1,1.1){$4$}
      \put(1,0){\circle*{0.1}}
      \put(1.2,-0.2){$3$}
      \put(3,0){\circle*{0.1}}
      \put(3.2,-0.2){$5$}
      \put(1,2){\vector(1,-1){0.95}}
      \put(1,2){\vector(0,-1){1.95}}
      \put(1,2){\vector(-1,-1){0.95}}
      \put(0,1){\vector(1,-1){0.95}}
      \put(1,0){\vector(1,1){0.95}}
      \put(2,1){\vector(1,-1){0.95}}
    \end{picture}

$$
\setlength{\unitlength}{\standardunit}
\begin{picture}(62,14.5)(1,0)
\put(2,0){\line(0,1){16.5}}
\put(8,0){\line(0,1){13.5}}
\put(14,0){\line(0,1){10.5}}
\put(20,0){\line(0,1){16.5}}
\put(26,0){\line(0,1){13.5}}
\put(32,0){\line(0,1){7.5}}
\put(38,0){\line(0,1){10.5}}
\put(44,0){\line(0,1){16.5}}
\put(50,0){\line(0,1){4.5}}
\put(56,0){\line(0,1){4.5}}
\put(62,0){\line(0,1){7.5}}
\put(8,13.5){\line(1,0){18}}
\put(14,10.5){\line(1,0){24}}
\put(2,16.5){\line(1,0){42}}
\put(50,4.5){\line(1,0){6}}
\put(32,7.5){\line(1,0){30}}
\end{picture}
$$
  \end{center}
\end{minipage}
\begin{minipage}{0.3\linewidth}
  \begin{center}
    \setlength{\unitlength}{3em}
    \begin{picture}(4.0,4.0)(-0.5,0.2)
      \put(1,2){\circle*{0.1}}
      \put(1.1,2.1){$1$}
      \put(0,1){\circle*{0.1}}
      \put(-0.2,1.1){$2$}
      \put(2,1){\circle*{0.1}}
      \put(2.1,1.1){$4$}
      \put(1,0){\circle*{0.1}}
      \put(1.2,-0.2){$3$}
      \put(3,0){\circle*{0.1}}
      \put(3.2,-0.2){$5$}
      \put(1,2){\vector(1,-1){0.95}}
      \put(1,2){\vector(0,-1){1.95}}
      \put(1,2){\vector(-1,-1){0.95}}
      \put(1,0){\vector(-1,1){0.95}}
      \put(1,0){\vector(1,1){0.95}}
      \put(2,1){\vector(1,-1){0.95}}
    \end{picture}

$$
\setlength{\unitlength}{\standardunit}
\begin{picture}(62,14.5)(1,0)
\put(2,0){\line(0,1){13.5}}
\put(8,0){\line(0,1){7.5}}
\put(14,0){\line(0,1){10.5}}
\put(20,0){\line(0,1){13.5}}
\put(26,0){\line(0,1){7.5}}
\put(32,0){\line(0,1){7.5}}
\put(38,0){\line(0,1){10.5}}
\put(44,0){\line(0,1){13.5}}
\put(50,0){\line(0,1){4.5}}
\put(56,0){\line(0,1){4.5}}
\put(62,0){\line(0,1){7.5}}
\put(8,7.5){\line(1,0){18}}
\put(14,10.5){\line(1,0){24}}
\put(2,13.5){\line(1,0){42}}
\put(50,4.5){\line(1,0){6}}
\put(32,7.5){\line(1,0){30}}
\end{picture}
$$
  \end{center}
\end{minipage}
\begin{minipage}{0.3\linewidth}
  \begin{center}
    \setlength{\unitlength}{3em}
    \begin{picture}(4.0,4.0)(-0.5,0.2)
      \put(1,2){\circle*{0.1}}
      \put(1.1,2.1){$1$}
      \put(0,1){\circle*{0.1}}
      \put(-0.2,1.1){$2$}
      \put(2,1){\circle*{0.1}}
      \put(2.1,1.1){$4$}
      \put(1,0){\circle*{0.1}}
      \put(1.2,-0.2){$3$}
      \put(3,0){\circle*{0.1}}
      \put(3.2,-0.2){$5$}
      \put(1,2){\vector(1,-1){0.95}}
      \put(1,2){\vector(0,-1){1.95}}
      \put(1,2){\vector(-1,-1){0.95}}
      \put(1,0){\vector(-1,1){0.95}}
      \put(1,0){\vector(1,1){0.95}}
      \put(3,0){\vector(-1,1){0.95}}
    \end{picture}

$$
\setlength{\unitlength}{\standardunit}
\begin{picture}(62,14.5)(1,0)
\put(2,0){\line(0,1){13.5}}
\put(8,0){\line(0,1){7.5}}
\put(14,0){\line(0,1){10.5}}
\put(20,0){\line(0,1){13.5}}
\put(26,0){\line(0,1){7.5}}
\put(32,0){\line(0,1){7.5}}
\put(38,0){\line(0,1){10.5}}
\put(44,0){\line(0,1){13.5}}
\put(50,0){\line(0,1){10.5}}
\put(56,0){\line(0,1){10.5}}
\put(62,0){\line(0,1){7.5}}
\put(8,7.5){\line(1,0){18}}
\put(14,10.5){\line(1,0){24}}
\put(2,13.5){\line(1,0){42}}
\put(50,10.5){\line(1,0){6}}
\put(32,7.5){\line(1,0){30}}
\end{picture}
$$
  \end{center}
\end{minipage}

  \caption[Interval heaps]{The anti-interval graph, some acyclic orientations
    and interval heaps
    of the partition
$\pi=
\mbox{
\protect\begin{picture}(62,14.5)(1,0)
\put(2,0){\line(0,1){10.5}}
\put(8,0){\line(0,1){4.5}}
\put(14,0){\line(0,1){7.5}}
\put(20,0){\line(0,1){10.5}}
\put(26,0){\line(0,1){4.5}}
\put(32,0){\line(0,1){13.5}}
\put(38,0){\line(0,1){7.5}}
\put(44,0){\line(0,1){10.5}}
\put(50,0){\line(0,1){4.5}}
\put(56,0){\line(0,1){4.5}}
\put(62,0){\line(0,1){13.5}}
\put(8,4.5){\line(1,0){18}}
\put(14,7.5){\line(1,0){24}}
\put(2,10.5){\line(1,0){42}}
\put(50,4.5){\line(1,0){6}}
\put(32,13.5){\line(1,0){30}}
\protect\end{picture}}
$. All heaps except  the last are pyramidal.
}

  \label{fig:intervalheaps}
\end{figure}



\begin{prop}
  \label{prop:crossingheap}
  \begin{enumerate}
   \item  The crossing heaps of a connected partition are in bijection with the
    acyclic orientations of its crossing graph. Pyramids correspond to those rooted
    acyclic orientations which are rooted in the first block,
    thus $T_{G(\pi)}(1,0)$ equals the number of crossing heaps on $\pi$ which
    are pyramids.
   \item The interval heaps of an irreducible partition are in bijection with the
    acyclic orientations of its anti-interval graph. Pyramids correspond to
    those rooted
    acyclic orientations which are rooted in the first block,
    thus $T_{\tilde{G}(\pi)}(1,0)$ equals the number of interval heaps on $\pi$ which
    are pyramids.
  \end{enumerate}
\end{prop}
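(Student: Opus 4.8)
The plan is to prove both parts simultaneously, since the two graphs $G(\pi)$ and $\tilde{G}(\pi)$ enter in exactly parallel ways: in part (1) the relevant \emph{concurrency relation} on blocks is ``crossing'' (the edges of $G(\pi)$), while in part (2) it is ``having intersecting convex hulls'' (the edges of $\tilde{G}(\pi)$). Write $G$ for whichever of the two graphs is under consideration. First I would set up the bijection between heaps on $\pi$ and acyclic orientations of $G$. Given a heap, that is, a poset structure $\leq$ on the blocks in which every pair of concurrent blocks is comparable, orient each edge $\{V_i,V_j\}$ of $G$ from the smaller to the larger block; this is well defined precisely because concurrent blocks are comparable, and the resulting orientation is acyclic since a directed cycle would violate antisymmetry of $\leq$. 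Conversely, an acyclic orientation of $G$ has a well-defined transitive closure, which is a partial order in which all edges---hence all concurrent pairs---are comparable, i.e.\ a heap.

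The key point making these two maps mutually inverse is that a heap poset in the sense of Cartier--Foata coincides with the transitive closure of the orientation it induces: its order relation is generated by the concurrency relations, so no comparabilities beyond those forced by chains of concurrent blocks occur. Granting this, passing from an orientation to its transitive closure and back recovers the orientation, and starting from a heap, orienting its edges and re-closing recovers the heap. This establishes the first sentence of each part.

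Next I would match the pyramid condition to a rootedness condition on orientations. In a heap, the blocks maximal for $\leq$ are exactly the \emph{sinks} of the induced orientation (vertices with no outgoing edge), and $W\leq V_1$ holds iff there is a directed path from $W$ to $V_1$. Thus a pyramid---a heap whose unique maximal element is the first block $V_1$---corresponds to an acyclic orientation in which $V_1$ is the unique sink and is reachable by a directed path from every vertex. Reversing all edge directions is an involution on acyclic orientations that interchanges sinks and sources; under it, pyramids correspond bijectively to acyclic orientations rooted at $V_1$ as their unique source. By the interpretation of $T_G(1,0)$ recalled above \cite{GreeneZaslavsky:1983:interpretation}, the number of acyclic orientations with a prescribed unique source equals $T_G(1,0)$ and is independent of the chosen source. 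Taking the source to be $V_1$ and specializing $G=G(\pi)$ in part (1) and $G=\tilde{G}(\pi)$ in part (2) yields the two asserted identities.

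The main obstacle is purely bookkeeping and conventions: one must verify that the heap poset is genuinely the transitive closure of its induced orientation---so that the two maps are inverse rather than merely one-sided---and one must keep the direction conventions consistent, so that ``the first block is the unique maximal element'' of the heap becomes, after edge reversal, ``the first block is the unique source'' in the Greene--Zaslavsky count. Once these points are fixed, the statement is indeed immediate from the definitions.
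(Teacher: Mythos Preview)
Your proposal is correct and spells out precisely what the paper leaves implicit: the paper's proof consists of the single sentence ``immediate from the definition'' together with two illustrative figures, and your argument is the natural unpacking of that claim. You also rightly flag the one genuine subtlety the paper glosses over---that the heap poset must be the transitive closure of the orientation it induces (the Cartier--Foata convention) for the two maps to be mutually inverse---and your handling of the source/sink direction conventions via edge reversal is the clean way to reconcile ``first block is the unique maximal element'' with the Greene--Zaslavsky count of unique-source orientations.
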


From these facts one can make a connection to
the Cartier-Foata-Viennot theory of heaps \cite{CartierFoata:1969:problemes}
and in fact some proofs of \cite{JV13} make use of this machinery. 
The heap  interpretation of formulas \eqref{eq:free2class} and
\eqref{eq:bool2class} read as follows.
\begin{cor}
  \label{cor:pyramidformula}
  The classical cumulants can be expressed in terms of the free and Boolean
  cumulants as follows.
  \begin{equation}
    \label{eq:crossingpyramids}
    K_n = \sum (-1)^{\abs{\pi}-1} R_\pi
  \end{equation}
    where the sum runs over all pyramidal crossing heaps.
    \begin{equation}
  \label{eq:intervalpyramids}
    K_n = \sum (-1)^{\abs{\pi}-1} B_\pi
    \end{equation}
    where the sum runs over all pyramidal interval heaps.
\end{cor}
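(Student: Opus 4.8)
The plan is to derive both identities by a direct reindexing of the Tutte-polynomial formulas \eqref{eq:free2class} and \eqref{eq:bool2class}, replacing the numerical coefficients $T_{G(\pi)}(1,0)$ and $T_{\tilde{G}(\pi)}(1,0)$ by their heap interpretation from Proposition \ref{prop:crossingheap}. No new analytic or combinatorial input is needed beyond that proposition; the corollary is essentially a change of summation index.

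First I would treat the free case \eqref{eq:crossingpyramids}. Starting from the multivariate version of \eqref{eq:free2class},
\[
K_n=\sum_{\pi\in\conn\SP(n)}(-1)^{\abs{\pi}-1}T_{G(\pi)}(1,0)R_{\pi},
\]
Proposition \ref{prop:crossingheap}(1) asserts that, for a connected partition $\pi$, the integer $T_{G(\pi)}(1,0)$ is exactly the number of pyramidal crossing heaps whose underlying partition is $\pi$. Since the summand $(-1)^{\abs{\pi}-1}R_{\pi}$ depends only on $\pi$ and not on the particular poset structure of the heap, substituting this count for $T_{G(\pi)}(1,0)$ and expanding transforms the partition-indexed sum into an unweighted sum over all pyramidal crossing heaps, each contributing $(-1)^{\abs{\pi}-1}R_{\pi}$ for its underlying $\pi$. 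This is precisely \eqref{eq:crossingpyramids}.

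The Boolean case \eqref{eq:intervalpyramids} is entirely parallel: beginning from Theorem \ref{mainthm3} and invoking Proposition \ref{prop:crossingheap}(2), the factor $T_{\tilde{G}(\pi)}(1,0)$ equals the number of pyramidal interval heaps supported on the irreducible partition $\pi$, and the identical reindexing yields the claim.

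The only point deserving a word of justification is the compatibility of the index sets: crossing heaps (resp.\ interval heaps) are defined only on connected (resp.\ irreducible) partitions, so one must confirm that these coincide with the partitions actually appearing in the cumulant formulas. This holds because a partition is connected iff its crossing graph $G(\pi)$ is connected, and irreducible iff its anti-interval graph $\tilde{G}(\pi)$ is connected (the latter already noted in the proof of Theorem \ref{mainthm3}); for a disconnected graph $T(1,0)=0$, so the remaining partitions contribute nothing on either side and can be harmlessly excluded. I do not expect any genuine obstacle: the entire combinatorial content is carried by Proposition \ref{prop:crossingheap}, and the proof amounts to interchanging a partition sum weighted by heap-counts with the corresponding direct sum over heaps.
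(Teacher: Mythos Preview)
Your proposal is correct and matches the paper's approach exactly: the paper does not give an explicit proof of this corollary at all, presenting it simply as the ``heap interpretation'' of formulas \eqref{eq:free2class} and \eqref{eq:bool2class} via Proposition~\ref{prop:crossingheap}. Your write-up spells out precisely the reindexing that the paper leaves implicit.
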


\section{Permutation statistics and proof of Theorem~\ref{thm:bool2classcycles}}\label{permutation}
There are no cancellations in formula \eqref{eq:bool2class} and one might
wonder whether there is another combinatorial interpretation.
The following formulas suggest that a statistic on permutations might be
involved. 
\begin{cor}\label{cor9}
  $$  
  \sum_{\pi\in\irr\SP(n)} T_{\tilde{G}(\pi)}(1,0) = (n-1)!. 
  $$
\end{cor}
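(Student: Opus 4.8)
The plan is to prove this by comparing the two expressions for the classical cumulant $K_n$ as a sum of partitioned Boolean cumulants: the Tutte-polynomial formula of Theorem \ref{mainthm3} and the cyclic-permutation formula of Theorem \ref{thm:bool2classcycles}. Both express $K_n$ in terms of the partitioned Boolean cumulants $(B_\pi)_{\pi\in\SP(n)}$, so a blockwise comparison should identify $T_{\tilde{G}(\pi)}(1,0)$ with a count of cyclic permutations, after which the stated total collapses to $\abs{\Cycles_n}=(n-1)!$.

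First I would rewrite the right-hand side of \eqref{eq:bool2classcycles} by grouping the cyclic permutations according to the value $\pi=\cycleruns(\sigma)$ of their cycle-runs partition. Since $\#\cycleruns(\sigma)=\abs{\pi}$ is constant on each such fibre, the sign $(-1)^{\#\cycleruns(\sigma)-1}$ factors out and Theorem \ref{thm:bool2classcycles} takes the form
\[
K_n=\sum_{\pi\in\SP(n)}(-1)^{\abs{\pi}-1}\,\#\{\sigma\in\Cycles_n:\cycleruns(\sigma)=\pi\}\,B_{\pi},
\]
which I would match against $K_n=\sum_{\pi\in\irr\SP(n)}(-1)^{\abs{\pi}-1}T_{\tilde{G}(\pi)}(1,0)\,B_{\pi}$ from \eqref{eq:bool2class}.

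The key step is then the coefficient comparison. Both identities hold for arbitrary $X_1,\dots,X_n$ in arbitrary $(\mathcal{A},\varphi)$; since the Boolean moment-cumulant relation \eqref{mcboolean} is invertible while moments may be prescribed freely on a sufficiently generic algebra, the functionals $(B_\pi)_{\pi\in\SP(n)}$ are linearly independent. Matching the coefficient of each $B_\pi$ and cancelling the common sign would give
\[
T_{\tilde{G}(\pi)}(1,0)=\#\{\sigma\in\Cycles_n:\cycleruns(\sigma)=\pi\},\qquad \pi\in\irr\SP(n),
\]
and, for reducible $\pi$, would force $\#\{\sigma:\cycleruns(\sigma)=\pi\}=0$; that is, the cycle-runs partition of every cyclic permutation is automatically irreducible. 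Summing the displayed identity over $\pi\in\irr\SP(n)$ and using that the fibres over reducible partitions are empty then collapses the total to $\sum_{\pi\in\SP(n)}\#\{\sigma:\cycleruns(\sigma)=\pi\}=\abs{\Cycles_n}=(n-1)!$.

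I expect the main obstacle to be the justification of the coefficient comparison, i.e.\ the linear independence of the partitioned Boolean cumulants; this is precisely what lets me avoid verifying irreducibility of $\cycleruns(\sigma)$ by hand from the definition in Section \ref{permutation}. If a self-contained argument were preferred, the alternative would be to construct an explicit bijection between the pyramidal interval heaps on $\pi$ counted by $T_{\tilde{G}(\pi)}(1,0)$ (Proposition \ref{prop:crossingheap}) and the cyclic permutations with cycle-runs partition $\pi$, which would prove the blockwise identity directly and, in passing, re-derive Theorem \ref{thm:bool2classcycles}.
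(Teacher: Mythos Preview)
Your argument is correct but takes a substantially different and more elaborate route than the paper. The paper's proof uses only Theorem~\ref{mainthm3} and a specialisation trick: set all Boolean cumulants to $b_n=-1$, so that $b_\pi=(-1)^{\abs\pi}$ and \eqref{eq:bool2class} collapses to $\kappa_n=-\sum_{\pi\in\irr\SP(n)}T_{\tilde G(\pi)}(1,0)$; then compute $M(z)=1/(1+\sum_{n\geq1}z^n)=1-z$, take logarithms, and read off $-\kappa_n=(n-1)!$. Your approach instead also invokes Theorem~\ref{thm:bool2classcycles} (which in the paper is proved \emph{after} this corollary, the corollary being presented precisely as heuristic motivation for seeking a permutation interpretation) and compares coefficients to extract the blockwise identity $T_{\tilde G(\pi)}(1,0)=\#\{\sigma\in\Cycles_n:\cycleruns(\sigma)=\pi\}$, from which the total follows by summation. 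The linear-independence step is fine---working over the free algebra with generic $\varphi$, the multivariate Boolean cumulants are algebraically independent coordinates, so the $B_\pi$ are distinct monomials---but what you have really done is re-derive Lemma~\ref{lem:intervaltutte}, which the paper proves directly via an explicit bijection. So your route buys the stronger blockwise statement at the cost of requiring both theorems as input, whereas the paper's two-line generating-function computation needs only Theorem~\ref{mainthm3}.
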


\begin{proof}
  The sum is obtained by evaluating the negative classical cumulants
  of a (formal) distribution with Boolean cumulants
  $b_n=-1$ for all $n$. The ``moments'' have generating function
  $$
  M(z) = \frac{1}{1+\sum_{n=1}^\infty z^n} = 1-z
  $$
  and in this case $F(z)=M(z)$.
  Thus the negative classical cumulants have exponential generating series
  $$
  - \log (1-z) = \sum_{n=1}^\infty \frac{z^n}{n}. 
  $$
\end{proof}

We collect key concepts for statistics of permutations. 
\begin{enumerate}
 \item 
Recall that a \emph{run} in a permutation $\sigma \in\SG_n$ is a maximal
increasing segment of the sequence $(\sigma(1),
\sigma(2),\dots,\sigma(n))$. Any permutation decomposes uniquely into runs.
With the exception of the last run, at the end of each run there is a descent,
therefore the number of runs is equal to the number of descents incremented by
one. Given a permutation $\sigma$ we denote by $\runs(\sigma)$ the set
partition associated to the set of runs. 

\item 
Every permutation $\pi$ has a unique factorization into cycles
$\sigma=\gamma_1\gamma_2\dotsm\gamma_k$,
where  the cycles are sorted in increasing order
with respect to their minimal elements.
Moreover, every cycle is written starting with its minimal element.
A \emph{cycle run} in a permutation $\sigma$ is a maximal contiguous
increasing subsequence of one of its cycles;
in other words, a maximal sequence $i_1<i_2<\dotsm< i_r$
such that $\sigma(i_k)=i_{k+1}$.

It is easy to see that distinct cycle runs of a given permutation are disjoint
and therefore give rise a set partition of order $n$. 
We denote this set partition by $\cycleruns(\sigma)$. 

We denote by $\cycles(\sigma)\in\SP(n)$
the \emph{cycle partition} of $\sigma$, i.e., the set partition whose blocks are 
given by the cycles of $\sigma$.
\end{enumerate}

The key identity for showing Theorem \ref{thm:bool2classcycles} is contained in
the  following lemma. 
\begin{lem}\label{thm7} 
\begin{equation}  \label{eq:bool2classpermutation}
    \varphi(X_1X_2\dotsm X_n) = \sum_{\sigma\in \SG_n} (-1)^{\#\cycleruns(\sigma)-\#\cycles(\sigma)} B_{\cycleruns(\sigma)}(X_1,X_2,\dots,X_n). 
  \end{equation}
\end{lem}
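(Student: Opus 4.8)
The plan is to reduce \eqref{eq:bool2classpermutation} to the Boolean moment--cumulant formula \eqref{mcboolean}, namely $\varphi(X_1\cdots X_n)=\sum_{\pi\in\I(n)}B_\pi(X_1,\dots,X_n)$, by collecting on the right-hand side all permutations $\sigma$ that produce a fixed set partition $\pi=\cycleruns(\sigma)$. Since $B_{\cycleruns(\sigma)}$ depends only on $\pi$ and the exponent $\#\cycleruns(\sigma)=\abs{\pi}$ is constant along this fibre, the right-hand side rewrites as $\sum_{\pi}S(\pi)\,B_\pi$ with
\[
S(\pi)=\sum_{\substack{\sigma\in\SG_n\\ \cycleruns(\sigma)=\pi}}(-1)^{\abs{\pi}-\#\cycles(\sigma)} .
\]
It then suffices to prove that $S(\pi)=1$ when $\pi\in\I(n)$ and $S(\pi)=0$ otherwise.

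First I would set up a bijection between the fibre $\{\sigma:\cycleruns(\sigma)=\pi\}$ and the permutations $\tau$ of the blocks $V_1,\dots,V_k$ of $\pi$ subject to the descent constraint $\min\tau(A)\le\max A$. Given such a $\sigma$, every block is a maximal increasing run inside a cycle, so recording which block follows which (set $\tau(A)=B$ when $\sigma(\max A)=\min B$) yields a permutation of the blocks whose cycles are exactly the cycles of $\sigma$; maximality of the runs forces each transition to be a genuine descent $\min B<\max A$, while the boundary case $\tau(A)=A$ reduces to $\min A\le\max A$, which always holds and accounts for single-block cycles and singleton fixed points. Conversely, any $\tau$ obeying the constraint reconstructs a unique $\sigma$ by concatenating the chains according to $\tau$, and the descent condition guarantees that no two blocks fuse into a single run, so $\cycleruns(\sigma)=\pi$. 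Under this correspondence $\#\cycles(\sigma)=\#\cycles(\tau)$, and since $\operatorname{sgn}(\tau)=(-1)^{\abs{\pi}-\#\cycles(\tau)}$ we obtain
\[
S(\pi)=\sum_{\tau}\operatorname{sgn}(\tau)=\det M(\pi),
\]
where $M(\pi)$ is the $k\times k$ matrix whose $(A,B)$ entry is $1$ if $\min B\le\max A$ and $0$ otherwise, i.e.\ the adjacency matrix of the anti-interval digraph $\overrightarrow{G}(\pi)$.

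It remains to evaluate $\det M(\pi)$. Ordering the blocks so that $\min V_1<\cdots<\min V_k$ and writing $b_i=\min V_i$, $c_i=\max V_i$, the entry $M_{ij}=1$ exactly when $b_j\le c_i$, so row $i$ depends only on $\rho_i:=\#\{\,j:b_j\le c_i\,\}$, and $\rho_i\ge i$ since $b_1<\cdots<b_i\le c_i$. If $\pi$ is an interval partition the blocks are separated, $b_{i+1}>c_i$, whence $\rho_i=i$, so $M(\pi)$ is lower triangular with unit diagonal and $\det M(\pi)=1$. Otherwise some $\rho_{i_0}>i_0$; as the values $\rho_i$ lie in $\{1,\dots,k\}$ and satisfy $\rho_i\ge i$, they cannot all be distinct (distinct values would force $\rho=\mathrm{id}$), so two rows of $M(\pi)$ coincide and $\det M(\pi)=0$. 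Equivalently, precomposing $\tau$ with the transposition of two such equal rows is a fixed-point-free sign-reversing involution on the contributing permutations.

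I expect the bijection of the second paragraph to be the main obstacle: one must check that the descent condition $\min\tau(A)\le\max A$ is \emph{exactly} the constraint cutting out the fibre over $\pi$ --- neither too weak (two blocks would merge into one run) nor too strong (single-block cycles and singleton fixed points must survive) --- and that cycles are matched faithfully so that the sign $(-1)^{\abs{\pi}-\#\cycles(\sigma)}$ is transported to $\operatorname{sgn}(\tau)$. Once this correspondence is pinned down, the determinant evaluation is a short staircase-matrix computation and \eqref{mcboolean} closes the argument.
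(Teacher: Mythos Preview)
Your argument is correct and takes a genuinely different route from the paper's.  The paper proves \eqref{eq:bool2classpermutation} by a direct sign-reversing involution on $\SG_n$: permutations whose cycle factorisation consists entirely of ``interval cycles'' $(k,k+1,\dots,l)$ contribute exactly the terms of the Boolean moment--cumulant formula \eqref{mcboolean}, and every other permutation is cancelled by an involution $\Phi$ which splits or joins two cycles at the \emph{last} descent, changing $\#\cycles(\sigma)$ by $\pm1$ while preserving $\cycleruns(\sigma)$.

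You instead fibre the sum over $\pi=\cycleruns(\sigma)$, biject each fibre with permutations $\tau$ of the blocks subject to the descent constraint $\min\tau(A)\le\max A$, and recognise the signed fibre sum as the determinant of the staircase $0$--$1$ matrix $M(\pi)$ with $M_{ij}=[\,b_j\le c_i\,]$.  The determinant evaluation is clean: rows are determined by the integers $\rho_i\ge i$, so either $\rho_i=i$ for all $i$ (forcing $\pi\in\I(n)$ and $\det M=1$) or two rows coincide and $\det M=0$.  Your closing remark that swapping two equal rows furnishes a sign-reversing involution on the admissible $\tau$ is, in effect, a block-level reformulation of the paper's $\Phi$.

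What each approach buys: the paper's involution is shorter and entirely elementary, needing no linear algebra.  Your determinant formulation is more structural and produces extra information --- the identity $S(\pi)=\det M(\pi)$ with $M(\pi)$ the $0$--$1$ matrix of the relation $\min V_j\le\max V_i$ ties the computation to the anti-interval digraph $\overrightarrow{G}(\pi)$ that appears elsewhere in the paper, and the staircase-matrix argument makes the dichotomy ``$\pi$ interval or not'' transparent.  The bijection you flag as the main obstacle is indeed the only point requiring care; your description handles the boundary cases (single-run cycles, singleton blocks) correctly, since $\min\tau(A)\le\max A$ is automatic when $\tau(A)=A$ and is equivalent to a strict descent when $\tau(A)\ne A$ because the blocks are disjoint.
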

\begin{proof} The starting point is formula \eqref{mcboolean} for $\pi=\hat{1}_n$, expressing moments in terms of Boolean cumulants. 
 The right hand side of formula \eqref{mcboolean} for $\pi=\hat{1}_n$ arise
 from the terms $$
 (-1)^{\#\cycleruns(\sigma)-\#\cycles(\sigma)}
 B_{\cycleruns(\sigma)}(X_1,X_2,\dots,X_n)
 $$ 
 where $\sigma$ factorizes into
 ``interval cycles'', i.e., cycles of the form    
 $$
 (k,k+1,k+2,\dots,l). 
 $$
 Note that in this case we have $\#\cycles(\sigma)=\#\cycleruns(\sigma)$ and so the sign is $+1$. 
 Hence the proof follows the common strategy of finding an inversion which shows cancellation of all permutations which do not have a factorization into interval cycles. 

  Assume that a permutation $\sigma$ is not of interval type,
  and let $\sigma=\gamma_1\gamma_2\dotsm \gamma_m$ be its
  cycle decomposition in standard order and
  $\gamma_i=(r_{i1},r_{i2},\dots, r_{i k_i})$ be the run decompositions
  of the cycles.
  Since $\sigma$ is not of interval type, there are descents.
  Our involution $\Phi$ operates on the last descent.
  We say that $\sigma$ is  of type $A$ if the last descent occurs
  within one cycle and type $B$ if it occurs between two consecutive cycles.
  We set up an involution $\Phi$ as follows.  

  If $\sigma$ is of type $A$ and the last descent occurs within one cycle,
  then it occurs necessarily
  between the last two runs of this cycle. We split the cycle at this
  descent to obtain $\Phi(\sigma)$ which is of type $B$.

  If $\sigma$ is of type $B$, the descent occurs between two cycles and the
  second cycle necessarily only consists of one run. We join the two cycles
  to obtain  $\Phi(\sigma)$ which is of type $A$.

  Figure~\ref{fig:involution} shows an example of the action of 
  the involution $\Phi$. The permutation on the left (type A)
  is mapped to the permutation on the right (type B) and vice versa.

  \begin{figure}
    \centering
    \begin{minipage}{0.3\linewidth}
    \setlength{\unitlength}{1em}
    \begin{picture}(9.0,9.0)
      \put(1,1){\line(1,2){1.0}}
      \put(1,1){\circle*{0.2}}
      \put(2,3){\circle*{0.2}}
      \put(3,2){\line(1,3){1.0}}
      \put(3,2){\circle*{0.2}}
      \put(4,5){\line(1,2){1.0}}
      \put(4,5){\circle*{0.2}}
      \put(5,7){\line(1,-3){1.0}}
      \put(5,7){\circle*{0.2}}
      \put(6,4){\line(1,2){1.0}}
      \put(6,4){\circle*{0.2}}
      \put(7,6){\circle*{0.2}}
      \put(8,8){\line(1,1){1.0}}
      \put(8,8){\circle*{0.2}}
      \put(9,9){\circle*{0.2}}
    \end{picture}
    \phantom{x}$(1,3)(2,5,7,4,6)(8,9)$
    \end{minipage}
    \begin{minipage}{0.3\linewidth}
    \setlength{\unitlength}{1em}
    \begin{picture}(9.0,9.0)
      \put(1,1){\line(1,2){1.0}}
      \put(1,1){\circle*{0.2}}
      \put(2,3){\circle*{0.2}}
      \put(3,2){\line(1,3){1.0}}
      \put(3,2){\circle*{0.2}}
      \put(4,5){\line(1,2){1.0}}
      \put(4,5){\circle*{0.2}}
      \multiput(5,7)(0.2,-0.6){5}{\line(1,-3){0.1}}
      \put(5,7){\circle*{0.2}}
      \put(6,4){\line(1,2){1.0}}
      \put(6,4){\circle*{0.2}}
      \put(7,6){\circle*{0.2}}
      \put(8,8){\line(1,1){1.0}}
      \put(8,8){\circle*{0.2}}
      \put(9,9){\circle*{0.2}}
    \end{picture}
    \phantom{x}$(1,3)(2,5,7)(4,6)(8,9)$
    \end{minipage}
    \caption{The involution $\Phi$}
    \label{fig:involution}
  \end{figure}
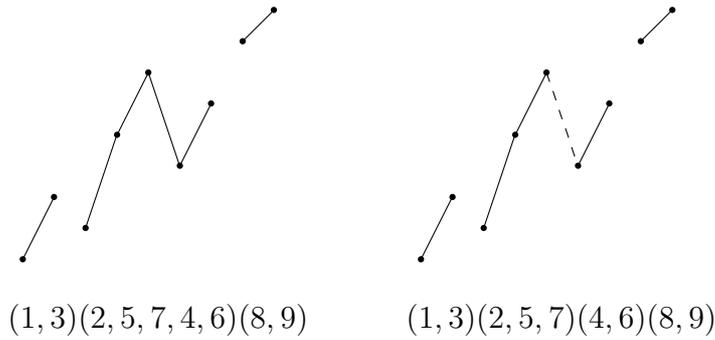

  In both cases
  the position of the last descent remains the same
  and therefore $\Phi$ maps type $A$ to type $B$ bijectively
  and $\Phi\circ\Phi=\text{Id}$.

  On the other hand, the total number of runs is unchanged,
  while the total number of cycles is changed by $\pm1$.
  It follows that the contributions of type A and type B permutations
  in the sum~\eqref{eq:bool2classpermutation} cancel.
\end{proof}

\begin{proof}[First proof of Theorem~\ref{thm:bool2classcycles}]
  Denote by $\tilde{K}_n$ the right hand side of (\ref{eq:bool2classcycles}): 
  $$
\tilde{K}_n=\sum_{\sigma\in \Cycles_n} (-1)^{\#\cycleruns(\sigma)-1} B_{\cycleruns(\sigma)}. 
  $$
  The full cycles are exactly the permutations $\sigma$
  such that $\cycles(\sigma)=\hat{1}_n$ and thus
  $$
  \tilde{K}_n=\sum_{\substack{\sigma\in\SG_n\\ \cycles(\sigma)=\hat{1}_n}}(-1)^{\#\cycleruns(\sigma)-1} B_{\cycleruns(\sigma)}.
  $$
Taking the product of them, it is then easy to see that
  $\tilde{K}_\pi=\sum_{\substack{\sigma\in\SG_n\\ \cycles(\sigma)=\pi}}(-1)^{\#\cycleruns(\sigma)-\#\cycles(\sigma)} B_{\cycleruns(\sigma)},$
  and so from Lemma \ref{thm7}
  $$
  \varphi(X_1\cdots X_n) = \sum_{\pi\in\SP(n)} \tilde{K}_\pi(X_1,\dots,X_n). 
  $$
  Since the same formula holds with $\tilde{K}_\pi$ replaced by $K_\pi$, 
  then multiplicativity and the M\"obius principle imply that $\tilde{K}_n=K_n$.
\end{proof}

Another proof of Theorem~\ref{thm:bool2classcycles} can be obtained from the
following identities.
\begin{lem}
  \label{lem:intervaltutte}
  For any irreducible partition $\pi\in \SP(n)$ the evaluation of the Tutte polynomial
  $T_{\tilde{G}(\pi)}(1,0)$ is equal to the following numbers. 
  \begin{enumerate}
   \item The number of pyramidal interval heaps on $\pi$.
   \item The number of cyclic permutations $\sigma\in\Cycles_n$ such that 
    and $\cycleruns(\sigma)=\pi$.
   \item The number of permutations $\sigma\in\SG_n$ such that $\sigma(1)=1$
    and $\runs(\sigma)=\pi$.
  \end{enumerate}
  In particular, $\cycleruns(\sigma)$ is an irreducible permutation for every $\sigma\in\Cycles_n$.
\end{lem}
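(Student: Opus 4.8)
The plan is to show that all three quantities coincide by combining one previously established identity with two bijections. Statement~(1) is nothing but Proposition~\ref{prop:crossingheap}(2): the evaluation $T_{\tilde{G}(\pi)}(1,0)$ already equals the number of pyramidal interval heaps on $\pi$, i.e.\ the number of acyclic orientations of $\tilde{G}(\pi)$ whose unique source is the block $V_1\ni1$. For the equivalence of (2) and (3) I would use the explicit bijection $\Cycles_n\to\{\sigma\in\SG_n:\sigma(1)=1\}$ from the introduction, sending $\gamma=(1,\gamma(1),\dots,\gamma^{n-1}(1))$ to the permutation with one-line word $(1,\gamma(1),\dots,\gamma^{n-1}(1))$. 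Since $1$ is the global minimum no cycle run wraps around, so the cycle runs of $\gamma$ are precisely the maximal increasing segments of this word; hence $\cycleruns(\gamma)=\runs(\sigma)$ and (2), (3) agree blockwise. It thus suffices to prove $(1)=(3)$.

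For the heart of the argument I would first reformulate (3). A permutation with $\sigma(1)=1$ and $\runs(\sigma)=\pi$ is obtained by writing the blocks of $\pi$, each in increasing order, in some sequence $(W_1,\dots,W_m)$; the condition $\sigma(1)=1$ forces $W_1=V_1$, and $\runs(\sigma)=\pi$ forces a descent at every junction, i.e.\ $\max W_i>\min W_{i+1}$ for $1\le i<m$. Call such sequences \emph{admissible}. I would then define a map $\Psi$ from admissible orderings to orientations of $\tilde{G}(\pi)$ by orienting each edge (a pair of blocks with overlapping convex hulls) from the earlier to the later block. The first step is that $\Psi$ lands among the pyramidal heaps, i.e.\ its unique source is $V_1$: if some $U=W_j$ with $j\ge2$ were a source, then every earlier block would be non-adjacent to $U$, hence have convex hull disjoint from $I_U$ and so lie entirely left or entirely right of it. As $V_1\ni1$ it lies entirely left, while the descent $\max W_{j-1}>\min U$ forces $W_{j-1}$ entirely right; these are incompatible when $j=2$, and for $j\ge3$ they force some consecutive earlier pair $W_p,W_{p+1}$ to pass from entirely left to entirely right of $I_U$, whence $\max W_p<\min U\le\max U<\min W_{p+1}$ contradicts admissibility.

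Granting this, the crux is that $\Psi$ is a \emph{bijection}, equivalently that each acyclic orientation with source $V_1$ has exactly one admissible linear extension. Here the local descent test does not single out the next block (two disjoint blocks can both be available and both pass the descent at a given step), so uniqueness is a global phenomenon: among incomparable (disjoint) blocks the admissible extension is forced to emit the rightmost available block first, lest a forbidden left-to-right junction appear later in the sequence. I expect this global uniqueness to be the main obstacle, and would prove it by induction on the number of blocks, peeling off an extremal block and tracking how its removal affects both the orientation and the descent constraints; alternatively one may invoke the Cartier--Foata--Viennot correspondence between pyramids and connected (here, cyclic) objects, of which Corollary~\ref{cor9} and the series $-\log(1-z)$ are the visible shadow.

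Finally I would prove the ``in particular'' claim, that $\cycleruns(\sigma)$ is irreducible for every $\sigma\in\Cycles_n$. Fix $1\le k\le n-1$; since $\sigma$ is a single $n$-cycle, $\{1,\dots,k\}$ cannot be $\sigma$-invariant (it would otherwise be a union of cycles), so there is $x\le k$ with $\sigma(x)>k\ge x$. Because $\sigma(x)>x$, the elements $x$ and $\sigma(x)$ lie in the same cycle run, which therefore contains an element $\le k$ and an element $>k$; hence the interval cut at $k$ cannot split off any block of $\cycleruns(\sigma)$. As $k$ was arbitrary the interval closure is $\hat{1}_n$, i.e.\ $\cycleruns(\sigma)\in\irr\SP(n)$, so the sum in (2) indeed ranges over irreducible partitions. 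I note a shortcut available once Theorems~\ref{thm:bool2classcycles} and~\ref{mainthm3} are both in hand: comparing the two resulting expansions of $K_n$ in the linearly independent family $\{B_\pi\}_{\pi\in\irr\SP(n)}$ yields $(1)=(2)$ at once; but since this lemma is meant to furnish an \emph{independent} second proof of Theorem~\ref{thm:bool2classcycles}, I would establish it through the bijections above.
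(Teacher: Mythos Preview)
Your proposal is correct and follows the same route as the paper: Proposition~\ref{prop:crossingheap}(2) for item~(1), the one-line bijection $\Cycles_n\leftrightarrow\{\sigma\in\SG_n:\sigma(1)=1\}$ for (2)$=$(3), and a bijection $\Psi$ from admissible block-orderings to pyramidal interval heaps for the remaining equality, with the pyramid property verified by essentially the same ``walk back until you find a covering predecessor'' idea recast as a left/right contradiction. Where you leave bijectivity as the crux to be handled by induction, the paper instead writes down the inverse explicitly---build the cycle word from right to left by repeatedly peeling off the \emph{leftmost minimal} block of the remaining heap---which is exactly the right-to-left dual of your ``emit the rightmost available block first'' rule; your standalone proof that $\cycleruns(\sigma)$ is irreducible is a clean extra that the paper leaves implicit.
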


\begin{proof}
  The first identity has already been seen in
  Proposition~\ref{prop:crossingheap} and the equality of the other two 
  numbers
  follows from the obvious bijection between cyclic permutations and
  permutations fixing $1$.
  It remains to provide a bijection between runs of cyclic permutations and
  pyramidal interval heaps.
  
  Given a cyclic permutation~$\sigma$, we assign to it 
  an ordered interval partition $\Psi(\sigma)$
  consisting of the cycle runs of $\sigma$ read from left to right.
  This order induces a heap structure by placing subsequent blocks
  below their predecessors. We claim that $\Psi(\sigma)$ is a pyramid.

  Let $r_1$, $r_2$, \ldots{}, $r_m$ be the cycle runs of $\sigma$,
  in the order of their appearance when $\sigma$ is written as 
  $(1,\sigma(1),\sigma^2(1),\dots,\sigma^{n-1}(1))$.
  Clearly $r_1$ is not a singleton and for every run $r_k$ with $k\geq 2$ the following statements are true. 
  \begin{enumerate}[(a)]
   \item The maximal element in the run $r_{l-1}$ exceeds the starting element of $r_l$ $(2 \leq l \leq k)$. 
   
   \item There is a run $r_i$ with $i\leq k-1$ such that
    as an element of the heap, $r_i$ covers $r_k$, i.e.,
    $\max r_i  > \min r_k$ and $\min r_i<\max r_k$. In other words,
    the convex hulls of $r_i$ and $r_k$ intersect.
 \end{enumerate}
Since $r_1,\dots, r_{l-1},r_l, \dots$ is the decomposition of $\sigma$ into cycle runs, we must have $\max r_{l-1} > \min r_l$ and this implies (a).

 To show (b), we perform the following steps.
  \begin{enumerate}
   \item If $\min r_{k-1} < \max r_k$, then $r_{k-1}$ itself covers $r_k$ from (a) and we are done.
   \item If $\min r_{k-1} > \max r_k$, then $\max r_{k-2} > \min r_{k-1} > \max r_k$. If $\min r_{k-2} < \max r_k$, then $r_{k-2}$ covers $r_k$ and we are done. If not, we go to $r_{k-3}$. We repeat this until we reach some run $r_{i}$ $(i < k-1)$ such that $\min r_{i} < \max
    r_k < \max r_i$.
    This must happen at some point because ultimately we reach 
    $r_1=(1,\dots)$ and then clearly $1=\min r_1 < \max r_k$.
      \end{enumerate}

  The inverse map can be defined recursively as follows:
  Given an interval heap, take the leftmost minimal element and write it to the
  left of the previously written cycle run.
  There are two possibilities.
  Either the previous run was not covered by the current one,
  then it came from a block to the left,
  or it was covered by the current one.
  In either case the maximum of the current block is larger than the
  minimum of the previous one and thus the current block starts a new cycle run.
  For an example of this process see Figure~\ref{fig:inversemap}.
  This map clearly reverses the map $\Psi$ defined above.
  \begin{figure}


\hfill{}
\begin{minipage}{0.4\linewidth}
$$
\setlength{\unitlength}{0.2em}
\begin{aligned}
&\begin{picture}(74,15.5)(1,3)
\put(2,14.5){\line(0,1){2}}
{\color{red}{\put(8,2.5){\line(0,1){2}}}}
\put(14,5.5){\line(0,1){2}}
\put(20,11.5){\line(0,1){2}}
{\color{red}{\put(26,2.5){\line(0,1){2}}}}
\put(32,14.5){\line(0,1){2}}
\put(38,8.5){\line(0,1){2}}
\put(44,5.5){\line(0,1){2}}
\put(50,2.5){\line(0,1){2}}
\put(56,11.5){\line(0,1){2}}
\put(62,2.5){\line(0,1){2}}
\put(68,14.5){\line(0,1){2}}
\put(74,8.5){\line(0,1){2}}
{\color{red}{\put(8,4.5){\line(1,0){18}}}}
\put(14,7.5){\line(1,0){30}}
\put(20,13.5){\line(1,0){36}}
\put(50,4.5){\line(1,0){12}}
\put(2,16.5){\line(1,0){66}}
\put(38,10.5){\line(1,0){36}}
\end{picture}\\
&
\newcommand{\dista}{\,\,\,\,}
1\dista{}2\dista{}3\dista{}4\dista{}5\dista{}6\dista{}7\dista{}8\dista{}9\dista{}10\,11\,12\,13
\end{aligned}
$$
\end{minipage}
\hfill{}
$\to$
\hfill{}
(\phantom{1,6,12,4,10,7,13,9,11,3,8,}{\color{red}2,5})
\hfill{}


\hfill{}
\begin{minipage}{0.4\linewidth}
$$
\setlength{\unitlength}{0.2em}
\begin{aligned}
&\begin{picture}(74,15.5)(1,6)
\put(2,14.5){\line(0,1){2}}
{\color{red}\put(14,5.5){\line(0,1){2}}}
\put(20,11.5){\line(0,1){2}}
\put(32,14.5){\line(0,1){2}}
\put(38,8.5){\line(0,1){2}}
{\color{red}\put(44,5.5){\line(0,1){2}}}
\put(50,5.5){\line(0,1){2}}
\put(56,11.5){\line(0,1){2}}
\put(62,5.5){\line(0,1){2}}
\put(68,14.5){\line(0,1){2}}
\put(74,8.5){\line(0,1){2}}
{\color{red}\put(14,7.5){\line(1,0){30}}}
\put(20,13.5){\line(1,0){36}}
\put(50,7.5){\line(1,0){12}}
\put(2,16.5){\line(1,0){66}}
\put(38,10.5){\line(1,0){36}}
\end{picture}\\
&
\newcommand{\dista}{\,\,\,\,}
1\dista{}2\dista{}3\dista{}4\dista{}5\dista{}6\dista{}7\dista{}8\dista{}9\dista{}10\,11\,12\,13
\end{aligned}
$$
\end{minipage}
\hfill{}
$\to$
\hfill{}
(\phantom{1,6,12,4,10,7,13,9,11,}{\color{red}3,8},2,5)
\hfill{}


\hfill{}
\begin{minipage}{0.4\linewidth}
$$
\setlength{\unitlength}{0.2em}
\begin{aligned}
&\begin{picture}(74,15.5)(1,6)
\put(2,14.5){\line(0,1){2}}
\put(20,11.5){\line(0,1){2}}
\put(32,14.5){\line(0,1){2}}
\put(38,8.5){\line(0,1){2}}
{\color{red}\put(50,5.5){\line(0,1){2}}}
\put(56,11.5){\line(0,1){2}}
{\color{red}\put(62,5.5){\line(0,1){2}}}
\put(68,14.5){\line(0,1){2}}
\put(74,8.5){\line(0,1){2}}
\put(20,13.5){\line(1,0){36}}
{\color{red}\put(50,7.5){\line(1,0){12}}}
\put(2,16.5){\line(1,0){66}}
\put(38,10.5){\line(1,0){36}}
\end{picture}\\
&
\newcommand{\dista}{\,\,\,\,}
1\dista{}2\dista{}3\dista{}4\dista{}5\dista{}6\dista{}7\dista{}8\dista{}9\dista{}10\,11\,12\,13
\end{aligned}
$$
\end{minipage}
\hfill{}
$\to$
\hfill{}
(\phantom{1,6,12,4,10,7,13,}{\color{red}9,11},3,8,2,5)
\hfill{}


\hfill{}
\begin{minipage}{0.4\linewidth}
$$
\setlength{\unitlength}{0.2em}
\begin{aligned}
&\begin{picture}(74,15.5)(1,9)
\put(2,14.5){\line(0,1){2}}
\put(20,11.5){\line(0,1){2}}
\put(32,14.5){\line(0,1){2}}
{\color{red}\put(38,8.5){\line(0,1){2}}}
\put(56,11.5){\line(0,1){2}}
\put(68,14.5){\line(0,1){2}}
{\color{red}\put(74,8.5){\line(0,1){2}}}
\put(20,13.5){\line(1,0){36}}
\put(2,16.5){\line(1,0){66}}
{\color{red}\put(38,10.5){\line(1,0){36}}}
\end{picture}\\
&
\newcommand{\dista}{\,\,\,\,}
1\dista{}2\dista{}3\dista{}4\dista{}5\dista{}6\dista{}7\dista{}8\dista{}9\dista{}10\,11\,12\,13
\end{aligned}
$$
\end{minipage}
\hfill{}
$\to$
\hfill{}
(\phantom{1,6,12,4,10,}{\color{red}7,13},9,11,3,8,2,5)
\hfill{}


\hfill{}
\begin{minipage}{0.4\linewidth}
$$
\setlength{\unitlength}{0.2em}
\begin{aligned}
&\begin{picture}(74,15.5)(1,12)
\put(2,14.5){\line(0,1){2}}
{\color{red}\put(20,11.5){\line(0,1){2}}}
\put(32,14.5){\line(0,1){2}}
{\color{red}\put(56,11.5){\line(0,1){2}}}
\put(68,14.5){\line(0,1){2}}
{\color{red}\put(20,13.5){\line(1,0){36}}}
\put(2,16.5){\line(1,0){66}}
\end{picture}\\
&
\newcommand{\dista}{\,\,\,\,}
1\dista{}2\dista{}3\dista{}4\dista{}5\dista{}6\dista{}7\dista{}8\dista{}9\dista{}10\,11\,12\,13
\end{aligned}
$$
\end{minipage}
\hfill{}
$\to$
\hfill{}
(\phantom{1,6,12,}{\color{red}4,10},7,13,9,11,3,8,2,5)
\hfill{}


\hfill{}
\begin{minipage}{0.4\linewidth}
$$
\setlength{\unitlength}{0.2em}
\begin{aligned}
&\begin{picture}(74,15.5)(1,15)
{\color{red}\put(2,14.5){\line(0,1){2}}
\put(32,14.5){\line(0,1){2}}
\put(68,14.5){\line(0,1){2}}
\put(2,16.5){\line(1,0){66}}
}
\end{picture}\\
&
\newcommand{\dista}{\,\,\,\,}
1\dista{}2\dista{}3\dista{}4\dista{}5\dista{}6\dista{}7\dista{}8\dista{}9\dista{}10\,11\,12\,13
\end{aligned}
$$
\end{minipage}
\hfill{}
$\to$
\hfill{}
({\color{red}1,6,12},4,10,7,13,9,11,3,8,2,5)
\hfill{}    
    \caption{The inverse map for an interval heap on the partition
$
\pi =
\left\{
{\left\{ 1, \: 6, \: {12} 
\right\}},
\: {\left\{ 2, \: 5 
\right\}},
\: {\left\{ 3, \: 8 
\right\}},
\: {\left\{ 4, \: {10} 
\right\}},
\: {\left\{ 7, \: {13} 
\right\}},
\: {\left\{ 9, \: {11} 
\right\}}
\right\}
$
}
    \label{fig:inversemap}
  \end{figure}
\end{proof}
With this lemma, Theorem~\ref{thm:bool2classcycles} follows from Corollary \ref{cor:pyramidformula}, equation \eqref{eq:intervalpyramids} or from Theorem \ref{mainthm3}. 

Recall that the \emph{Eulerian polynomials} are defined by
\begin{equation}\label{euler}
E_n(x) = \sum_{\sigma\in \SG_n} x^{d(\sigma)} = \sum_{k=0}^n \eulerbr{n}{k}\, x^k
\end{equation}
where $\eulerbr{n}{k}$ is the number of permutations with $k$ descents.
A good reference for these and the following facts is the book \cite{GrahamKnuthPatashnik:1994:concrete}.
The Eulerian polynomials have been studied as moments in
\cite{Barry:2011:eulerian}. 

Note that each run of a permutation must follow a descent and
therefore $\sharp(\runs(\sigma))=d(\sigma)+1$,
and Lemma~\ref{lem:intervaltutte} together with formula \eqref{euler} yields the identity 
\begin{equation}
\sum_{\substack{\pi \in \irr\SP(n) \\ \abs{\pi}=k}} T_{\tilde{G}(\pi)}(1,0) =  \eulerbr{n-1}{k-1}. 
\end{equation}

As a special case of Corollary \ref{thm:bool2classpermutation1}, we consider the Boolean Poisson distribution. We interpret the Eulerian polynomials as classical cumulants.

\begin{prop}\label{prop10}
  Consider the distribution with Boolean cumulants $b_n=x$ for all $n$.   Then the classical cumulants are given by the Eulerian polynomials
  $$
  \kappa_n = xE_{n-1}(-x). 
$$
\end{prop}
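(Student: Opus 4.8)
The plan is to deduce the proposition directly from the univariate form of Corollary~\ref{thm:bool2classpermutation1}, which specializes~\eqref{eq:bool2classpermutation0} to $n$ copies of a single variable:
\begin{equation*}
\kappa_n = \sum_{\substack{\sigma\in\SG_n\\ \sigma(1)=1}} (-1)^{d(\sigma)}\, b_{\runs(\sigma)},
\end{equation*}
where $b_{\runs(\sigma)} = \prod_{V\in\runs(\sigma)} b_{\abs{V}}$. First I would substitute the hypothesis $b_n=x$ for all $n$. Since every factor contributes exactly $x$ regardless of the block size, this collapses the product to $b_{\runs(\sigma)} = x^{\#\runs(\sigma)}$, so the computation reduces to tracking only the number of runs.

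The second step uses the elementary identity $\#\runs(\sigma)=d(\sigma)+1$ recalled in Section~\ref{permutation} (each run after the first begins immediately following a descent). This turns the sum into
\begin{equation*}
\kappa_n = \sum_{\substack{\sigma\in\SG_n\\ \sigma(1)=1}} (-1)^{d(\sigma)}\, x^{d(\sigma)+1}
= x \sum_{\substack{\sigma\in\SG_n\\ \sigma(1)=1}} (-x)^{d(\sigma)},
\end{equation*}
so that the entire content of the statement is now packaged into the descent-generating polynomial of permutations fixing~$1$.

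The final and crucial step is to identify this restricted descent enumerator with the Eulerian polynomial $E_{n-1}(-x)$ via the standard bijection $\{\sigma\in\SG_n : \sigma(1)=1\}\to\SG_{n-1}$ given by $\tau(k)=\sigma(k+1)-1$ for $1\le k\le n-1$. I expect the main (though still routine) point to be verifying that this bijection preserves the descent statistic exactly. The key observation is that because $\sigma(1)=1$ is the global minimum, position~$1$ can never be a descent of $\sigma$; hence the descents of $\sigma$ all lie among positions $2,\dots,n-1$, and deleting the initial entry and renormalizing shifts each such position down by one while preserving the inequality $\sigma(i)>\sigma(i+1)\iff\tau(i-1)>\tau(i)$. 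Therefore $d(\sigma)=d(\tau)$, and summing over the bijection gives
\begin{equation*}
\sum_{\substack{\sigma\in\SG_n\\ \sigma(1)=1}} (-x)^{d(\sigma)}
= \sum_{\tau\in\SG_{n-1}} (-x)^{d(\tau)}
= E_{n-1}(-x)
\end{equation*}
by definition~\eqref{euler}. Combining the three steps yields $\kappa_n = x\,E_{n-1}(-x)$, as claimed.
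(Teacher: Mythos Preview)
Your proof is correct and follows essentially the same approach as the paper: apply Corollary~\ref{thm:bool2classpermutation1}, substitute $b_n=x$ using $\#\runs(\sigma)=d(\sigma)+1$, and then use the descent-preserving bijection $\{\sigma\in\SG_n:\sigma(1)=1\}\to\SG_{n-1}$ to identify the sum with $E_{n-1}(-x)$. If anything, you are slightly more explicit than the paper, which simply invokes ``the natural bijection \dots\ which preserves the number of descents'' without writing it down.
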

\begin{proof}
By the remarks above $b_{\runs(\sigma)}=x^{d(\sigma)+1}$, and hence from Theorem \ref{thm:bool2classpermutation1}
$$
\kappa_n=\sum_{\substack{\sigma\in \SG_n\\ \sigma(1)=1}} (-1)^{d(\sigma)} x^{d(\sigma)+1}. 
$$
The desired formula follows from the natural bijection $\{\sigma\in \SG_n: \sigma(1)=1\} \to \SG_{n-1}$, which preserves the number of descents. 
\end{proof}

\begin{rem}
  \begin{enumerate} 
   \item The following determinant formulas relating moments and classical cumulants are known:
    $$
    \kappa_n = (-1)^{n-1} (n-1)!
    \begin{vmatrix}
      m_1 & 1 & 0 & 0 & 0 &\dotsm &0\\
      m_2 &m_1& 1 & 0 & 0&\dotsm &0\\
      \frac{m_3}{2!} & \frac{m_2}{2!} & m_1 & 1 &  0&\dotsm&0 \\
      \frac{m_4}{3!} & \frac{m_3}{3!} & \frac{m_2}{2!} & m_1 & 1 &  \dotsm&0 \\
      \vdots&\vdots&\vdots&\vdots&&&\vdots\\
      \frac{m_n}{(n-1)!} & \frac{m_{n-1}}{(n-1)!} & \frac{m_{n-2}}{(n-2)!}
      & &\dotsm &  &    m_1
    \end{vmatrix}
    $$
    and 
    $$
    m_n = 
    \begin{vmatrix}
      \kappa_1 & -1 & 0 & 0 & 0 &\dotsm &0\\
      \kappa_2 &\kappa_1& -2 & 0 & 0&\dotsm &0\\
      \frac{\kappa_3}{2!} & \kappa_2 & \kappa_1 & -3 &  0&\dotsm&0 \\
      \frac{\kappa_4}{3!} & \frac{\kappa_3}{2!} & \kappa_2 & \kappa_1 & -4 &  \dotsm&0 \\
      \vdots&\vdots&\vdots&\vdots&&&\vdots\\
      \frac{\kappa_n}{(n-1)!} & \frac{\kappa_{n-1}}{(n-2)!} & \frac{\kappa_{n-2}}{(n-3)!}
      & &\dotsm &  &    \kappa_1
    \end{vmatrix}. 
    $$ 
    These formulas follow from Cramer's rule applied to the linear system
    obtained from the recursion formula
    $$
    \sum_{k=1}^n \binom{n-1}{k-1} m_{n-k}\kappa_k = m_n,\qquad n \geq 1; 
    $$
    see \cite{RotaShen:2000:combinatorics}.
   \item 
    Similarly, comparison of coefficients in the identity  $M(z)(1-B(z))=1$ 
    leads to the recursion
    $$
   \sum_{k=1}^{n}m_{n-k}b_{k} = m_n, \qquad n \geq 1
    $$
    and thus
    \cite{DAntonaMunarini:2000:combinatorial}
    $$
     b_n =
    (-1)^{n-1}
    \begin{vmatrix}
      m_1 & 1 &0&\dotsm&0\\
      m_2 & m_1 & 1&\dotsm&0\\
      \vdots &\vdots &\ddots &\ddots\\
      m_{n-1} & m_{n-2} & \dotsm & m_1 & 1\\
      m_n & m_{n-1} & \dotsm & m_2 & m_1\\
    \end{vmatrix}, 
     \qquad
   m_n =
   \begin{vmatrix}
     b_1 & -1&0&\dotsm&0 \\
      b_2 & b_1 & -1&\dotsm &0\\
      \vdots &\vdots &\ddots &\ddots\\
      b_{n-1} & b_{n-2} & \dotsm & b_1 & -1\\
      b_n & b_{n-1} & \dotsm & b_2 & b_1\\
    \end{vmatrix}.
    $$ 
    \end{enumerate}
\end{rem}

\section{Monotone-to-classical case: }\label{sec8}
Although we have not yet found a satisfactory description for the general coefficients
appearing in the monotone-to-classical cumulant
formula 
\begin{equation}
K_n=\sum_{\pi\in\SP(n)}\beta(\pi) H_{\pi},                 
\end{equation}
we report on some partial results including Theorem \ref{classical-monotone}.

We first provide some general considerations on the recursive nature of this
problem and then apply the approach of \cite{JV13} to obtain some special
cases. 

We first observe that the equation 
\begin{equation}\label{eq start}
\sum_{\pi \in \SP(n)}K_{\pi}(X_1,\dots, X_n)=\varphi(X_1\cdots X_n)=\sum_{\pi \in \NC(n)}\frac{1}{\tau(\pi)!}H_{\pi}(X_1,\dots, X_n)
\end{equation}
can be transofmed into the form
\begin{equation}\label{eq:clamo}
K_n(X_1,\dots,X_n)=-\sum_{\sigma \in \SP(n), \sigma\neq \hat{1}_n}K_{\sigma}(X_1,\dots, X_n)+\sum_{\pi \in \NC(n)}\frac{1}{\tau(\pi)!}H_{\pi}(X_1,\dots, X_n). 
\end{equation}
which allows for a recursive algorithm to obtain the coefficients
$\beta(\pi)$. More precisely, once $\beta$ is known for all partitions of size
$k<n$, the coefficient $\beta(\pi)$ of a partition $\pi\in \SP(n)$ is obtained
as follows: We take any partition $\sigma\in[\pi,\hat{1}_n)$ and express $K_{\sigma}$
in terms of the monotone cumulants. If $\sigma=\{W_1,W_2,\dots,W_s\}$, the
coefficient of $H_{\pi}$ in such an expression will be exactly
$\beta(\pi|_{W_1})\cdots \beta(\pi|_{W_s})$. We must do this for every
$\sigma\geq\pi$ and if $\pi\in \NC(n)$  we must in addition consider the
coefficient $(\tau(\pi)!)^{-1}$ on the right hand side of eq.\ (\ref{eq:clamo})
as well.   
Hence relation \eqref{eq:clamo} above can be recast into the following recursion
\begin{equation}\label{eq:recursion1}
\beta(\pi)
=\begin{cases}
\displaystyle\frac{1}{\tau(\pi)!}-\sum_{\substack{\sigma\in \SP(n) \\ \sigma \in[\pi,\hat{1}_n)}}\prod_{W\in \sigma}\beta(\pi|_W)& \text{if } \pi\in \NC(n),  \vspace{10pt}\\ 
\displaystyle-\sum_{\substack{\sigma\in \SP(n) \\ \sigma \in[\pi,\hat{1}_n)}}\prod_{W\in \sigma}\beta(\pi |_W)& \text{if } \pi\notin \NC(n).
\end{cases}
\end{equation}

We show first that $\beta(\pi)=0$ for all $\pi \notin \irr\SP$ by induction on $\abs{\pi}$. For $\abs{\pi}=1$ the assertion is trivial. Suppose that $\beta(\pi)=0$ for all $\pi \notin \irr\SP$ with $\abs{\pi}<k$.

Now let $\abs{\pi}=k$ with $\pi\notin \irr\SP$. Let $\pi_1,\dots,\pi_s$, $s\geq 2$, be the irreducible components of $\pi$. By formula (\ref{eq:recursion1}), we need to look at partitions $\sigma\in[\pi,1_n)$. If a block $V$ of $\sigma$ contains blocks of $\pi$ from different irreducible components, then $\pi|_V$ is reducible and hence $\beta(\pi|_V)=0$ by induction hypothesis.

Therefore, a contribution to $\beta(\pi)$ can only come from partitions of the
form $\sigma=\sigma_1\cup\sigma_2\cup\dots\cup\sigma_s$, with $\sigma_i\geq
\pi_i$, and hence 

\begin{align}\label{eq:betairr}
\sum_{\substack{\sigma\in \SP(n) \\ \sigma \in[\pi,\hat{1}_n)}}\prod_{W\in \sigma}\beta(\pi|_W)&=\sum_{\substack{\sigma_1\cup\dots\cup\sigma_s\in \SP(n) \\ \sigma_i\geq \pi_i}}\prod_{W\in \sigma}\beta(\pi|_W)\\
&=\prod_{i=1}^s\sum_{\sigma_i\geq \pi_i}\prod_{W\in\sigma_i}\beta(\pi_i|_W). 
\end{align}
We now apply recursion \eqref{eq:recursion1}  separately
to each sum occuring in \eqref{eq:betairr} and obtain
$$
\sum_{\sigma_i\geq \pi_i}\prod_{W\in\sigma_i}\beta(\pi_i|_W)
= \beta(\pi_i) + \sum_{\sigma_i\in [\pi_i, \hat{1})}\prod_{W\in\sigma_i}\beta(\pi_i|_W)
=\begin{cases}
\displaystyle\frac{1}{\tau(\pi_i)!}& \text{if } \pi_i\in \NC,  \vspace{10pt}\\ 
\displaystyle 0& \text{if } \pi_i\notin \NC.
\end{cases}
$$
Now we observe that $\pi\in \NC$ if and only if each $\pi_i \in \NC$
and that $\tau(\pi)!=\tau(\pi_1)!\cdots\tau(\pi_s)!$ for $\pi\in \NC$,
to  conclude that $\beta(\pi)=0$. 

\begin{rem}
Note that, in order to show that $\beta$ is supported on $\irr\SP$, we only used that, for any pair of cumulants $(A_n)_{n\geq 1}$, $(C_n)_{n\geq 1}$, we have:
$$\sum_{\pi\in \SP(n)} \omega_1(\pi) A_{\pi}(\mathbf X)=\varphi(\mathbf X)= \sum_{\pi\in \SP(n)} \omega_2(\pi) C_{\pi}(\mathbf X),$$
where, for $i=1,2$, the weights $\omega_i(\pi)=\omega_i(\pi_1)\cdots \omega(\pi_s)$ factorize according to the irreducible components $\pi_1,\dots,\pi_s$ of $\pi$. Following the proof that $\beta(\pi)=0$ for $\pi\notin \irr\SP$, we get for some constants $(\alpha(\pi))_{\pi \in \irr\SP} \subset \real$ that 
$$
A_n(\mathbf X)=\sum_{\pi\in \irr\SP(n)}\alpha(\pi)C_{\pi}(\mathbf X).
$$
This shows that all 12 cumulant formulas are supported on $\irr\SP$. Moreover, the fact that monotone, Boolean and free cumulants assign a weight $\omega(\pi)=0$ to any crossing partition implies that the corresponding cumulant formulas will be actually supported on $\irr\SP\cap\NC=\irr\NC$.

The classical and free cumulants both have weights that are invariant under cyclic rotations. This means that the corresponding $\alpha(\pi)$ will also be rotationally invariant. Hence $\alpha(\pi)$ can only be nonzero if all cyclic rotations of $\pi$ remain in $\irr\SP(n)$. This means that $\pi\in \conn\SP$.
\end{rem}

Because of the nature of
the recursion \eqref{eq:recursion1}, the dependence of the coefficient
$\beta(\pi)$ is uniquely determined by
crossing/nesting structure of the blocks of partitions contained in the
interval $[\pi,\hat{1}_n]$. Hence we suggest the following refinement of the
anti-interval graph which actually distinguishes between crossings and nestings
(see fig.~\ref{fig:graphs}): 
\begin{defi}\label{defimonotone}
The anti-interval digraph $\overrightarrow{G}(\pi)$ is obtained from the interval graph by replacing every (non-directed) edge $(V_i,V_j)$ of $\tilde{G}(\pi)$ by:
\begin{enumerate}
 \item a directed edge $(V_i,V_j)$, if $V_j$ nests inside $V_i$, 
 \item a directed edge $(V_j,V_i)$, if $V_i$ nests inside $V_j$,
 \item a non-directed edge $(V_i,V_j)$, otherwise (equivalently, if $V_i$ and $V_j$ cross).
\end{enumerate}
\end{defi}
It is not hard to see (by induction on $\abs{\pi}$) that the digraph $\overrightarrow{G}(\pi)=(V,E)$ determines $\beta(\pi)$: In the recursion \eqref{eq:recursion1}, the contribution of each $1_n>\sigma=\{W_1,\dots,W_s\}\geq\pi$, one should look at the subgraphs $\overrightarrow{G}(\pi)|_{W_i}$ of $\overrightarrow{G}(\pi)$ indicated by the blocks of $\sigma$ and then just multiply all $\beta(\pi|_{W_i})$, which are known already since $1_n>\sigma$. 
So we may write $
\beta(\pi)=\beta(\overrightarrow{G}(\pi)). 
$

Let us conclude with the proof of Theorem~\ref{classical-monotone}.
First we will use the approach of \cite{JV13} to obtain a rather explicit
expression for $\beta(\pi)$ from which everything can be deduced.

\begin{lem}
  \label{lem:betaformula}
  \begin{equation}
  \label{eq:betaformula}
  \beta(\pi) =  \sum_{\sigma  \trianglerighteq\pi} \frac{\mu_\SP(\sigma,\hat{1}_n)}{\tau(\pi|\sigma)!}
  \end{equation}
  where $\tau(\pi|\sigma)!=\prod_{W\in\sigma} \tau(\pi|_W)!$
  and the partial order relation $ \pi  \trianglelefteq \sigma$ on $\SP$ 
  refines the
  usual order $\pi\leq\sigma$ by the additional requirement that $\pi|_W$ is
  noncrossing for every block $W\in\sigma$.
\end{lem}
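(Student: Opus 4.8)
The plan is to mirror the proof of Theorem~\ref{mainthm3}: start from the classical cumulant–moment inversion for $K_n$, expand each moment functional $\varphi_\sigma$ \emph{blockwise} in terms of monotone cumulants, and then interchange the two summations so that the coefficient of each partitioned monotone cumulant $H_\pi$ can be read off directly. Concretely, I would begin with the classical cumulant–moment formula \eqref{cmclassical} for $\pi=\hat{1}_n$,
\[
K_n(X_1,\dots,X_n)=\sum_{\sigma\in\SP(n)}\varphi_\sigma(X_1,\dots,X_n)\,\mu_\SP(\sigma,\hat{1}_n).
\]

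The decisive step is the blockwise expansion. Writing $\varphi_\sigma=\prod_{W\in\sigma}\varphi_{\abs{W}}(X_W)$ and applying to each block $W$ the monotone moment–cumulant formula \eqref{mcmonotone}, rewritten via Proposition~\ref{prop:mpi=nestforest} as $\varphi_{\abs{W}}(X_W)=\sum_{\rho_W\in\NC(W)}\frac{1}{\tau(\rho_W)!}H_{\rho_W}(X_W)$, I would assemble the independent choices $\rho_W$ into a single partition $\rho=\bigcup_{W\in\sigma}\rho_W$ of $[n]$. By construction $\rho\leq\sigma$ and $\rho|_W$ is noncrossing for every block $W$ of $\sigma$, which is precisely the relation $\rho\trianglelefteq\sigma$; moreover the product of tree factorials becomes $\prod_{W\in\sigma}\tau(\rho|_W)!=\tau(\rho|\sigma)!$. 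Hence
\[
\varphi_\sigma(X_1,\dots,X_n)=\sum_{\rho\trianglelefteq\sigma}\frac{1}{\tau(\rho|\sigma)!}\,H_\rho(X_1,\dots,X_n).
\]

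Substituting this into the expression for $K_n$ and interchanging the summations over the index set $\{(\sigma,\rho):\rho\trianglelefteq\sigma\}$ gives
\[
K_n=\sum_{\rho\in\SP(n)}H_\rho\sum_{\sigma\,\trianglerighteq\,\rho}\frac{\mu_\SP(\sigma,\hat{1}_n)}{\tau(\rho|\sigma)!}.
\]
Comparing with the defining expansion $K_n=\sum_{\rho}\beta(\rho)H_\rho$ and renaming $\rho\mapsto\pi$ yields \eqref{eq:betaformula}. A quick sanity check: taking $\sigma=\hat{1}_n$ contributes $\frac{1}{\tau(\pi)!}$ for $\pi\in\NC(n)$, matching the leading term of the recursion \eqref{eq:recursion1}.

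The steps are individually routine, so the only points deserving care are the bookkeeping of $\trianglelefteq$ and the final coefficient comparison. For the former I must verify that the blockwise choices $\rho_W\in\NC(W)$ assemble to \emph{exactly} those $\rho$ with $\rho\trianglelefteq\sigma$, each arising uniquely, so nothing is over- or undercounted. For the latter, the identification of coefficients uses uniqueness of the expansion of $K_n$ in the partitioned monotone cumulants $\{H_\rho\}_{\rho\in\SP(n)}$; this is exactly what the recursion \eqref{eq:recursion1} guarantees (equivalently, the $H_\rho$ are linearly independent as multilinear functionals, since the monotone cumulants are free parameters determining the distribution). I expect this coefficient-uniqueness justification to be the main, albeit mild, obstacle.
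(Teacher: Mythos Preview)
Your proposal is correct and follows essentially the same approach as the paper: the paper's proof likewise starts from $K_n=\sum_{\sigma}\varphi_\sigma\,\mu_\SP(\sigma,\hat{1}_n)$, expands each $\varphi_\sigma$ blockwise via the monotone moment--cumulant formula to get $\sum_{\pi\trianglelefteq\sigma}\frac{1}{\tau(\pi|\sigma)!}H_\pi$, and then interchanges the sums. Your additional remarks on the bijection between tuples $(\rho_W)_{W\in\sigma}$ and partitions $\rho\trianglelefteq\sigma$, and on uniqueness of the coefficients $\beta(\pi)$, are welcome clarifications but not points of departure from the paper's argument.
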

\begin{proof}
  We follow the proof of \cite{JV13} and write
  \begin{align*}
    K_n &= \sum_{\sigma\in\SP(n)} \varphi_\sigma\, \mu_\SP(\sigma,\hat{1}_n) \\
       &= \sum_{\sigma\in\SP(n)} \sum_{\pi \trianglelefteq\sigma}
       \frac{1}{\tau(\pi|\sigma)!} H_\pi\, \mu_\SP(\sigma,\hat{1}_n) \\
       &= \sum_{\pi\in\SP(n)} H_\pi \sum_{\sigma \trianglerighteq\pi}
       \frac{\mu_\SP(\sigma,\hat{1}_n)}{\tau(\pi|\sigma)!}.  
  \end{align*}
\end{proof}

\begin{proof}[Proof of Theorem~\ref{classical-monotone}]
  The first part follows from Weisner's lemma
  \cite[Prop.~6.3]{BarnabeiBriniRota:1986:theory}.
  Let $P$ be a lattice and $a,b,c\in P$, then
  $$
  \sum_{x\wedge a=c}\mu(x,b) 
  =
  \begin{cases}
    0 & \text{if $a\not\geq b$},  \\
    \mu(c,b) &\text{if $a\geq b$}. 
  \end{cases}
  $$
  Consider the function
  $$
  f_\pi(\sigma) =
  \begin{cases}
    \frac{1}{\tau(\pi|\sigma)!} & \text{if  $\pi\trianglelefteq\sigma$}, \\
    0 & \text{if  $\pi\not\trianglelefteq\sigma$}. 
  \end{cases}
  $$
  Assume $\pi\in\SP(n)$ is not irreducible, and let $\rho=\hat\pi\ne\hat{1}_n$ be its interval closure.
  Then it is easy to see that $f_\pi(\sigma)=f_\pi(\sigma\wedge\rho)$.
  Indeed, if the restriction $\pi|_b$ has a crossing for some $b\in\sigma$, 
  then it occurs in the restriction
  $\pi_j|_b$ of some irreducible factor $\pi_j$ of $\pi$.
  If $\pi|\sigma$ has no crossings, then all nesting trees are contained inside
  the irreducible factors and therefore occur inside the blocks of $\sigma\wedge\rho$.
  
  Using this fact we can write
  \begin{align*}
    \beta(\pi)
    &= \sum_{\sigma\trianglerighteq\pi} \frac{\mu_\SP(\sigma,\hat{1}_n)}{\tau(\pi|\sigma)!}\\
    &= \sum_{\sigma\geq\pi} f_\pi(\sigma)\mu_\SP(\sigma,\hat{1}_n)\\
    &= \sum_{\sigma\geq\pi} f_\pi(\sigma\wedge\rho)\mu_\SP(\sigma,\hat{1}_n)\\
    &= \sum_{\tau\geq\pi} f_\pi(\tau) \sum_{\sigma\wedge\rho=\tau}
    \mu_\SP(\sigma,\hat{1}_n)\\
    &= 0.
  \end{align*}
This proves the first statement. 
  
  The second part holds true because the involved trees $\tau(\pi|\sigma)$ 
  are trivial and hence formula \eqref{eq:betaformula} becomes
  $$
  \beta(\pi)= \sum_{\sigma  \trianglerighteq\pi} \mu_\SP(\sigma,\hat{1}_n)=
  (-1)^{\abs{\pi}-1} T_{G(\pi)}(1,0)
  $$
  as in the proof of \cite[Theorem 7.1]{JV13}. Note that the assumptions imply
  that $\pi$ is connected.

  Finally, let $\pi\in\irr\NC$ be an irreducible noncrossing partition of depth $2$.
  This means that there is one outer block and $m:=\abs{\pi}-1$ inner blocks and the nesting
  tree $\tau(\pi)$ consists of the root and $m$ leaves.

  We classify the partitions of $t=\tau(\pi)$ according to the number of
  elements in the block containing the root. There are $\binom{n}{k}$ different
  subsets $b$ of $t$ containing $k$ vertices in addition to the root and for
  every set like this  $(t|_b)!=k+1$. Then the remaining vertices of $t$ can
  be partitioned without affecting the factorial and as $k$ ranges between $0$
  and $n$ we obtain 
  $$
  \beta(t)=\sum_{k=0}^m \binom{m}{k} 
    \sum_{\rho\in\SP(m-k)} 
      \frac{-\abs\rho}{k+1} 
      \mu_\SP(\rho,\hat{1}_{m-k})
  = -\sum_{k=0}^m \binom{m}{k}  \frac{\alpha_{m-k}}{k+1} 
  $$
  where
  \begin{align*}
    \alpha_n &= 
    \sum_{\rho\in\SP(n)} \abs\rho\mu_\SP(\rho,\hat{1}_n)
    = \frac{d}{dx}
    \left.
      \sum_{\rho\in\SP(n)} x^{\abs\rho}\mu_\SP(\rho,\hat{1}_n)
    \right\rvert_{x=1}. 
  \end{align*}
  The derivand in the last expression can be interpreted as the classical
  cumulant of order $n$ of a  
  standard Bernoulli law of weight $x$
  and the exponential generating function therefore is
  \begin{align*}
    \sum_{n=1}^\infty \frac{\alpha_n}{n!} z^n
    &= \frac{d}{dx}
    \left. \log \left( 1 + \sum_{n=1}^\infty \frac{x}{n!} z^n\right)
    \right\rvert_{x=1}
    \\
    &= 
    \left.
      \frac{d}{dx}
      \log (1 + x(e^z-1))
    \right\rvert_{x=1}
    \\
    &=
    \left.
      \frac{e^z-1}{1+x(e^z-1)}
    \right\rvert_{x=1}\\
    &= 1 - e^{-z}  
    = \sum_{n=1}^\infty\frac{(-1)^{n-1}}{n!} z^n
  \end{align*}
  and hence $\alpha_n=(-1)^{n-1}$.
  Thus, denoting by $\beta_m := \beta(t)$, 
  \begin{align*}
    \beta_m
    &= -\sum_{k=0}^m \binom{m}{k}  \frac{(-1)^{m-k-1}}{k+1} 
    \end{align*}
    and so 
    \begin{align*}
    \sum_{m=0}^\infty \frac{\beta_m}{m!} z^m 
    &= \sum_{m=0}^\infty\sum_{k=0}^m \frac{1}{(k+1)!} \frac{(-1)^{m-k}}{(m-k)!} z^m\\
    &= \frac{1}{z}(e^z-1)e^{-z} \\
    &= \sum_{m=0}^\infty \frac{(-1)^m}{(m+1)!}z^m
  \end{align*}
  and consequently $\beta(t)=\beta_m=\frac{(-1)^m}{m+1}= \frac{(-1)^{\abs{\pi}-1}}{\abs{\pi}}$.
\end{proof}

\begin{rem}
For the family $\pi_n=\{\{1,2n\},\{2,2n-1\},\dots,\{n,n+1\}\}$, $n\geq 1$, the
sequence $(n!\beta(\pi_n))_n=(1,-1,4,-33,456,-9460,274800,\dots) $ 
are the coefficients of the log-Bessel function
(\cite[A101981]{Sloane:encyclopedia} ).

Indeed, for $\pi=\pi_n$ the nesting tree is just a line segment and
$$
\beta(\pi) =  \sum_{\sigma  \geq\pi} \frac{\mu_\SP(\sigma,\hat{1}_n)}{[\pi,\sigma]!}.
$$
Here we use the fact that the interval $[\pi,\sigma]$ is isomorphic
to a direct product $\prod \SP(k)^{m_k}$ and
$\tau(\pi|\sigma)!=[\pi,\sigma]! = \prod(k!)^{m_k}$.
In other words $\beta=f*\mu_\SP$, the convolution of the multiplicative functions on
$\SP$ associated to the sequences $f_n=\frac{1}{n!}$ and $\mu_n=(-1)^{n-1}(n-1)!$.

Recall \cite{DoubiletRotaStanley:1972:foundations6,Stanley:1999:enumerative2}
that the reduced incidence 
algebra of $\SP$ incarnates the Faa di Bruno formula for exponential power series, i.e.,
if $A(z) = \sum_{n=1}^\infty \frac{a_n}{n!}\,z^n$ and  $B(z) = \sum_{n=1}^\infty \frac{b_n}{n!}\,z^n$ are
the corresponding exponential generating functions and $c=a*b$,
then the generating function of the sequence $(c_n)$ is
$$
\sum_{n=1}^\infty \frac{c_n}{n!}\,z^n = B(A(z)).
$$
In our case $\beta=f*\mu_\SP$, where
$$
F(z) = \sum_{n=1}^\infty \frac{z^n}{(n!)^2} = J_0(2i\sqrt{z})-1
$$
is the Bessel function of first kind and
$$
M(z) = \sum_{n=1}^\infty (-1)^{n-1}\frac{z^n}{n} = \log(1+z)
$$
is the logarithm, thus $\beta(\pi_n)$ are the coefficients of the log-Bessel function
$$
\log J_0(2i\sqrt{z}).
$$
The sequence $b_n=n!\beta(\pi_n)$ satisfies a recursion found by Carlitz
\cite{Carlitz:1963:sequence},
namely
$$
b_{n+1} = \sum_{k=1}^n \binom{n}{k}\binom{n}{k-1} b_k b_{n+1-k}
$$
and therefore $\beta_n=\beta(\pi_n)$ satisfies
\begin{multline*}
\beta_{n+1} = \frac{b_{n+1}}{(n+1)!} 
=\sum_{k=1}^n \frac{1}{n+1} \frac{n!}{(n-k)!(k-1)!}
\frac{b_k}{k!}\frac{b_{n+1-k}}{(n+1-k)!}
= \sum_{k=1}^n \frac{n}{n+1}\binom{n-1}{k-1} \beta_k\beta_{n+1-k}. 
\end{multline*}
\end{rem}

\providecommand{\bysame}{\leavevmode\hbox to3em{\hrulefill}\thinspace}
\providecommand{\MR}{\relax\ifhmode\unskip\space\fi MR }
\providecommand{\MRhref}[2]{%
  \href{http://www.ams.org/mathscinet-getitem?mr=#1}{#2}
}
\providecommand{\href}[2]{#2}

\end{document}